\newcommand{\C}{\mathbb{C}}
\newcommand{\R}{\mathbb{R}}
\newcommand{\N}{\mathbb{N}}
\newtheorem{theorem}{Theorem}[section]
\newtheorem{lemma}[theorem]{Lemma}
\newtheorem{corollary}[theorem]{Corollary}
\newtheorem{definition}[theorem]{Definition}
\preto{\section}{}
\preto{\subsection}{}
\begin{document}

\title{Incidence bounds on multijoints and generic joints}
\author{ Marina Iliopoulou}
\address{School of Mathematics, University of Birmingham, Birmingham, Edgbaston, B15 2TT, UK}
\email{\href{mailto:M.Iliopoulou@bham.ac.uk}{M.Iliopoulou@bham.ac.uk}}
\maketitle

\begin{abstract}
A point $x \in \mathbb{F}^n$ is a joint formed by a finite collection $\mathfrak{L}$ of lines in $\mathbb{F}^n$ if there exist at least $n$ lines in $\mathfrak{L}$ through $x$ that span $\mathbb{F}^n$. It is known that there are $\lesssim_n |\mathfrak{L}|^{\frac{n}{n-1}}$ joints formed by $\mathfrak{L}$. 

We say that a point $x \in \mathbb{F}^n$ is a multijoint formed by the finite collections $\mathfrak{L}_1,\ldots,\mathfrak{L}_n$ of lines in $\mathbb{F}^n$ if there exist at least $n$ lines through $x$, one from each collection, spanning $\mathbb{F}^n$. We show that there are $\lesssim_n (|\mathfrak{L}_1|\cdots |\mathfrak{L}_n|)^{\frac{1}{n-1}}$ such points for any field $\mathbb{F}$ and $n=3$, as well as for $\mathbb{F}=\R$ and any $n \geq 3$. 

Moreover, we say that a point $x \in \mathbb{F}^n$ is a generic joint formed by a finite collection $\mathfrak{L}$ of lines in $\mathbb{F}^n$ if each $n$ lines of $\mathfrak{L}$ through $x$ form a joint there. We show that, for $\mathbb{F}=\R$ and any $n \geq 3$, there are $\lesssim_n \frac{|\mathfrak{L}|^{\frac{n}{n-1}}}{k^{\frac{n+1}{n-1}}}+\frac{|\mathfrak{L}|}{k}$ generic joints formed by $\mathfrak{L}$, each lying in $\sim k$ lines of $\mathfrak{L}$. This result generalises, to all dimensions, a (very small) part of the main point-line incidence theorem in $\R^3$  in \cite{Guth_Katz_2010} by Guth and Katz.

Finally, we generalise our results in $\R^n$ to the case of multijoints and generic joints formed by real algebraic curves.
\end{abstract}

\section{Introduction}

A point $x \in \mathbb{F}^n$, where $\mathbb{F}$ is a field and $n \geq 2$, is a \textit{joint} for a finite collection $\mathfrak{L}$ of lines in $\mathbb{F}^n$ if there exist at least $n$ lines in $\mathfrak{L}$ passing through $x$, whose directions span $\mathbb{F}^n$. We denote by $J(\mathfrak{L})$ the set of joints formed by $\mathfrak{L}$. The joints problem asks for the optimal upper bound on $|J(\mathfrak{L})|$, depending only on $|\mathfrak{L}|$, and first appeared in \cite{Chazelle_Edelsbrunner_Guibas_Pollack_Seidel_Sharir_Snoeyink_1992}. After partial progress (see \cite{MR1280600}, \cite{MR2047237}, \cite{MR2121298}, \cite{MR2763049}), it was fully solved by Guth and Katz in $\R^3$ (in \cite{Guth_Katz_2008}), and then in $\R^n$ by Quilodr\'an (in \cite{MR2594983}) and independently by Kaplan, Sharir and Shustin (in \cite{MR2728035}), who showed that\footnote{In whatever precedes and follows, any expression of the form $A \lesssim B$ means that there exists an explicit non-negative constant $M$, such that $A \leq M \cdot B$, while any expression of the form $A\lesssim_{b_1,...,b_m} B$ means that there exists a non-negative constant $M_{b_1,...,b_m}$, depending only on $b_1$, ..., $b_m$, such that $A \lesssim M_{b_1,...,b_m}\cdot B$. In addition, any expression of the form $A \gtrsim B$ or $A \gtrsim_{b_1,...,b_m} B$ means that $B \lesssim A$ or $B \lesssim_{b_1,...,b_m} A$, respectively. Finally, any expression of the form $A \sim B$ means that $A \lesssim B$ and $A \gtrsim B$, while any expression of the form $A \sim_{b_1,...,b_m} B$ means that $A \lesssim_{b_1,...,b_m} B$ and $A \gtrsim_{b_1,...,b_m} B$.}
\begin{equation}|J(\mathfrak{L})| \lesssim_n |\mathfrak{L}|^{\frac{n}{n-1}}. \label{eq:basic}
\end{equation}
All solutions to the joints problem are based on the polynomial method, which was introduced in the area by Dvir for the solution of the Kakeya problem in finite fields (see \cite{MR2525780}). Note that \eqref{eq:basic} is known in fact in any field setting (for example, see \cite{Dvir_12}, \cite{Tao_14}); the proof is similar to the one in Euclidean space. Joints are interesting in their own right, playing an important role in incidence geometry, but also have connections with harmonic analytic problems (see the last subsection of the Introduction).

We now vary the notion of a joint, to form the notion of a multijoint: a joint formed by $n$ collections of lines in $\mathbb{F}^n$. 

\begin{definition} Let $\mathfrak{L}_1, \ldots , \mathfrak{L}_n$ be finite collections of lines in $\mathbb{F}^n$. We say that a point $x \in \mathbb{F}^n$ is a \textit{multijoint} formed by the $n$ collections of lines if, for each $i=1,\ldots, n$, there exists a line $l_i \in \mathfrak{L}_i$ passing though $x$, so that the directions of $l_1, \ldots, l_n$ span $\mathbb{F}^n$. We denote by $J(\mathfrak{L}_1,\ldots,\mathfrak{L}_n)$ the set of multijoints formed by $\mathfrak{L}_1,\ldots,\mathfrak{L}_n$.
\end{definition}

Naturally, the question arises of whether there exists a multilinear analogue of \eqref{eq:basic} in the case of multijoints. \\ \\
\textbf{Question 1. (Carbery)} \textit{Is it true that, for any field $\mathbb{F}$ and any $n \geq 3$, $|J(\mathfrak{L}_1,\ldots,\mathfrak{L}_n)| \lesssim_n (L_1\cdots L_n)^{\frac{1}{n-1}}$, for all finite collections $\mathfrak{L}_1, \ldots, \mathfrak{L}_n$ of $L_1,$ $\ldots, L_n$, respectively, lines in $\mathbb{F}^n$?} \\

It is conjectured (Carbery) that the answer to this question is positive. In fact, estimates suggestive of this have been shown by Carbery and Valdimarsson in \cite{CarberyValdimarsson}, in any field setting and dimension. Moreover, we have answered Question 1 in the affirmative in the case of $\R^3$ in \cite{Iliopoulou_14}. Our proof in \cite{Iliopoulou_14}, however, makes use of Guth--Katz polynomial partitioning (which relies on properties of Euclidean space), and the fact that the number of critical lines contained in an algebraic hypersurface in $\R^3$ is bounded from above, which does not necessarily hold in higher dimensions. Here, we answer Question 1 in the affirmative in the case of $\R^n$, for all $n \geq 3$ (for $n=2$ it is obvious in any field setting), as well as in the case of $\mathbb{F}^3$, where $\mathbb{F}$ is any field. In particular, we show the following.

\begin{theorem}\label{multijointsarbitrary} Let $\mathfrak{L}_1$, $\mathfrak{L}_2$, $\mathfrak{L}_3$ be finite collections of $L_1$, $L_2$ and $L_3$, respectively, lines in $\mathbb{F}^3$, where $\mathbb{F}$ is an arbitrary field. Then,
$$|J(\mathfrak{L}_1,\mathfrak{L}_2,\mathfrak{L}_3)| \leq c \;(L_1L_2L_3)^{1/2},$$
where $c$ is an absolute constant.

\end{theorem}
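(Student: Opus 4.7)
I would adapt Quilodr\'an's polynomial-method proof of the single-collection joints bound \eqref{eq:basic} to the three-collection setting, by induction on $L_1 + L_2 + L_3$, assuming WLOG $L_1 \leq L_2 \leq L_3$. Suppose for contradiction $J := |J(\mathfrak{L}_1, \mathfrak{L}_2, \mathfrak{L}_3)| > c(L_1L_2L_3)^{1/2}$ for $c$ a large absolute constant. For each multijoint $x$ fix a spanning triple $(\ell_1(x), \ell_2(x), \ell_3(x)) \in \mathfrak{L}_1 \times \mathfrak{L}_2 \times \mathfrak{L}_3$ with directions $v_1, v_2, v_3$ spanning $\mathbb{F}^3$.

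By a dimension count in the space of polynomials in three variables of degree $\leq D$ (of dimension $\binom{D+3}{3}$), there is a nonzero polynomial $p$ vanishing on all of $J$ with $D := \deg p \lesssim J^{1/3}$; take $p$ of minimum degree. If for every multijoint $x$ the polynomial $p$ vanishes identically on all three lines $\ell_1(x), \ell_2(x), \ell_3(x)$, then $\partial_{v_i} p(x) = 0$ for $i = 1, 2, 3$ and hence $\nabla p(x) = 0$ (since $v_1, v_2, v_3$ span $\mathbb{F}^3$); any nonzero partial derivative of $p$ then vanishes on $J$ with strictly smaller degree, contradicting minimality. Therefore some multijoint $x^*$ has a defining line $\ell^* = \ell_{i^*}(x^*)$ on which $p$ does not vanish identically, so $|\ell^* \cap J| \leq D$ by the one-variable polynomial bound. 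Removing $\ell^*$ from $\mathfrak{L}_{i^*}$ preserves the multijoint property for the remaining $\geq J - D$ multijoints, and the induction hypothesis applies.

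The main obstacle is that the induction closes only when $D \cdot L_{i^*} \lesssim \sqrt{L_1L_2L_3}$, i.e., when $\ell^*$ belongs to a collection of size $\lesssim \sqrt{L_j L_k}$; this is automatic when $i^* = 1$, but the gradient argument does not control $i^*$. I would resolve this by a case split on how unbalanced the $L_i$ are. In the \emph{balanced regime} $L_3 \lesssim \sqrt{L_1 L_2}$, the single-collection joint bound \eqref{eq:basic} applied to $\mathfrak{L}_1 \cup \mathfrak{L}_2 \cup \mathfrak{L}_3$ already gives $J \lesssim (L_1+L_2+L_3)^{3/2} \lesssim L_3^{3/2} \lesssim \sqrt{L_1L_2L_3}$, and we are done. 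In the \emph{unbalanced regime} $L_3 > \sqrt{L_1 L_2}$, I would instead seek a $p$ that \emph{additionally} vanishes on every line of $\mathfrak{L}_2 \cup \mathfrak{L}_3$: by dimension counting this imposes at most $(D+1)(L_2+L_3)$ extra constraints, so such a nonzero $p$ exists with $D \sim \sqrt{L_2+L_3}$ in this regime; for such a $p$, the gradient argument now forces the bad line $\ell^*$ into $\mathfrak{L}_1$, and the induction closes because $D \cdot L_1 \sim L_1 \sqrt{L_2+L_3} \lesssim \sqrt{L_1L_2L_3}$ whenever $L_1 \leq L_2$.

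The subtlest point is that the minimality-of-$p$ contradiction in the unbalanced regime requires the partial derivative $\partial_j p$ to lie in the same admissible space as $p$ (vanishing both on $J$ and on $\mathfrak{L}_2 \cup \mathfrak{L}_3$), whereas $\partial_j p$ does not in general inherit the vanishing of $p$ on a line. I would repair this either by demanding that $p$ vanish to order two on $\mathfrak{L}_2 \cup \mathfrak{L}_3$ (still affordable by dimension counting, since this only multiplies the line-conditions by a bounded factor, and then $\partial_j p$ retains first-order vanishing on these lines), or by replacing the minimality step with a direct double-counting argument for incidences of multijoints with lines of $\mathfrak{L}_1$ on which $p$ is not identically zero.
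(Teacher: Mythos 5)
Your balanced-regime reduction ($L_3 \lesssim \sqrt{L_1L_2}$ implies $J\lesssim L_3^{3/2}\lesssim\sqrt{L_1L_2L_3}$ via the single-collection joints bound) is correct and a pleasant observation, and the arithmetic in the unbalanced regime ($D\sim\sqrt{L_2+L_3}$, $DL_1\lesssim\sqrt{L_1L_2L_3}$ since $L_1\leq L_2$) is also fine. The gap is in the step ``the gradient argument now forces the bad line $\ell^*$ into $\mathfrak{L}_1$,'' and neither of your proposed repairs closes it. Quilodr\'an's minimality argument works because the gradient of $p$ lies in the same admissible class as $p$ (polynomials vanishing on $J$), so $\nabla p$ vanishing on $J$ contradicts minimality. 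Once you constrain $p$ to also vanish on $\mathfrak{L}_2\cup\mathfrak{L}_3$ (to any fixed order $m$), the partials only inherit order-$(m-1)$ vanishing, so they drop out of the admissible class after one differentiation. Concretely, with your order-two fix: if $\nabla p\ne 0$, then some $p_j$ vanishes on $J$ and to order one (not two) on $\mathfrak{L}_2\cup\mathfrak{L}_3$, with smaller degree — but this is not in the class over which you minimised, so no contradiction is reached. If instead you minimise over order-one vanishing, you have no control over $\nabla p$ on those lines at all. Iterating once further only yields a polynomial vanishing on $J$ alone, and applying the basic removal lemma at that point costs you $L_3\cdot D\sim L_3^{3/2}$, which is worse than $\sqrt{L_1L_2L_3}$ precisely in your unbalanced regime. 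The ``direct double-counting'' alternative controls only the multijoints whose $\ell_1$-line misses the zero set; for the rest, all three of their lines lie in $Z(p)$, so they are singular points of $Z(p)$, and there is no a priori bound on how many such points there can be. (There is also a secondary subtlety in characteristic two with the $p=g^q$ branch of the order-two argument, but the main issue is the one above.)

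The paper's proof (Lemma 3.1.1) takes a genuinely different route: it fixes an assignment $x\mapsto l_2(x)\in\mathfrak{L}_2$, isolates the ``popular'' lines of $\mathfrak{L}_2$ each chosen by $\gtrsim J/L_2$ multijoints, and then runs a probabilistic degree-reduction: randomly sampling a subcollection of $\mathfrak{L}_1$ of size $\sim d^2$ with $d=CL_1L_2/J$, finding a polynomial of degree $\lesssim d$ vanishing on the sample via Lemma 2.5, and observing that each popular $\mathfrak{L}_2$-line meets $\gtrsim d$ points on the sampled lines, hence lies in the zero set. This produces a polynomial of degree $\sim L_1L_2/J$ — strictly smaller than your $\sqrt{L_3}$ in the relevant regime — vanishing on a constant fraction of $J$, and then the \emph{unrestricted} removal lemma (Corollary 2.3) already suffices, since $L_3\cdot(L_1L_2/J)\gtrsim J$ rearranges to the desired bound. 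The key advantage is that the paper's degree is small enough that it never matters which collection the removed line belongs to, sidestepping exactly the obstruction your argument runs into.
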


\begin{theorem}\label{theoremmult2} Let $n\geq 2$. Let $\mathfrak{L}_1,\dots, \mathfrak{L}_n$ be finite collections of $L_1, \dots, L_n$, respectively, lines in $\R^n$. Then,
\begin{equation} \label{eq:proved}|J(\mathfrak{L}_1,\dots, \mathfrak{L}_n)|\leq c_n (L_1\cdots L_n)^{1/(n-1)}, \end{equation}
where $c_n$ is a constant depending only on $n$.
\end{theorem}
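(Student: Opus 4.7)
The plan is to argue by induction (on $L_1+\cdots+L_n$, say) and by contradiction, invoking the Guth--Katz polynomial partitioning theorem, which is the source of the restriction to $\mathbb{F}=\R$. Suppose $|J| := |J(\mathfrak{L}_1,\ldots,\mathfrak{L}_n)| > A_n (L_1\cdots L_n)^{1/(n-1)}$ for a large constant $A_n$ to be chosen. A preliminary reduction, using the trivial bound $|J| \leq \prod_{j\neq i} L_j$ (obtained by projecting the multijoint along the $l_i$-direction), allows me to assume that the $L_i$ are not too unbalanced, specifically $\max_i L_i \lesssim_n (L_1\cdots L_n)^{(n-2)/(n-1)}$; otherwise the target inequality is already implied by the trivial bound.

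Next I invoke Guth--Katz polynomial partitioning at a degree $D \sim A_n$: this produces a nonzero polynomial $P$ of degree at most $D$ whose zero set $Z(P)$ partitions $\R^n$ into $\lesssim D^n$ open cells, each containing $\lesssim |J|/D^n$ multijoints of $J$. In each open cell $\Omega$, the multijoints in $J\cap\Omega$ are themselves multijoints of the subcollections $\mathfrak{L}_i^\Omega := \{l\in\mathfrak{L}_i : l\cap\Omega\neq\emptyset\}$, so the inductive hypothesis gives $|J\cap\Omega| \leq c_n (L_1^\Omega\cdots L_n^\Omega)^{1/(n-1)}$. Summing via H\"older's inequality with exponents $(n,\ldots,n)$, and using the standard fact that a line not contained in $Z(P)$ meets at most $D+1$ cells, yields
$$\sum_\Omega |J\cap\Omega| \leq c_n \prod_{i=1}^n \Big(\sum_\Omega (L_i^\Omega)^{n/(n-1)}\Big)^{1/n} \leq c_n (D+1) (L_1\cdots L_n)^{1/(n-1)},$$
which is bounded by $|J|/2$ once $A_n$ is chosen large relative to $c_n$.

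The remaining multijoints lie on $Z(P)$. For each such $x$ with spanning $n$-tuple $l_1,\ldots,l_n$ (one from each collection), I split into two subcases: either some $l_i\not\subset Z(P)$, so that $x$ is one of the $\leq D$ points of $l_i \cap Z(P)$, or all $l_i$ lie in $Z(P)$, in which case $x$ must be a singular point of $Z(P)$ (since the tangent cone of $Z(P)$ at $x$ would contain $n$ linearly independent directions). The first subcase contributes at most $D\sum_i L_i$ multijoints, which is absorbed by the balancing reduction. For the second, I iterate the polynomial partitioning within $\mathrm{Sing}(Z(P))$, a subvariety of dimension at most $n-2$ cut out by $P$ and its partial derivatives, and perform a nested induction on dimension. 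The principal obstacle is precisely this last step: in $\R^3$ a degree-$D$ surface contains only $O(D^2)$ lines, a fact exploited in \cite{Iliopoulou_14}, but in $\R^n$ for $n\geq 4$ a real hypersurface of degree $D$ can contain many more lines, so dimensional reduction alone does not suffice. The key will be to exploit the multilinear nature of multijoints — one line required from each \emph{distinct} collection — so that even when a single collection concentrates many of its lines in $Z(P)$, the spanning condition forces incompatible restrictions across the remaining collections and prevents the recursive count from blowing up.
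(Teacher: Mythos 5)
Your sketch identifies its own critical gap: the treatment of multijoints all of whose spanning lines lie inside $Z(P)$. For $n\geq 4$ there is no useful bound on the number of lines on a real hypersurface of degree $D$, so the nested induction on $\dim\operatorname{Sing}(Z(P))$ you outline does not close, and the ``exploit multilinearity'' remark is left unspecified. The paper avoids this obstruction entirely by a different organisation of the argument. It inducts on the ambient dimension $n$ rather than on $\sum_i L_i$, applies Guth--Katz partitioning at the \emph{data-dependent} degree $d\sim_n L_1\cdots L_{n-1}/|J|^{n-2}$ rather than a constant degree, uses the $\R^{n-1}$ theorem inside each full cell (after a generic projection) to force, by contradiction, a positive fraction of the multijoints onto $Z(p)$, and then---crucially---closes via Corollary \ref{quilodransimple}, the Quilodr\'an-style iterative removal argument: any subset $J$ of joints lying on a degree-$d$ hypersurface satisfies $|J|\leq |\mathfrak{L}_1\cup\cdots\cup\mathfrak{L}_n|\cdot d$. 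That last step is the idea you are missing. Its proof (Lemma \ref{firstpolyd}) never distinguishes between lines contained in $Z(p)$ and lines transverse to it; it only uses minimality of $\deg p$ among polynomials vanishing on the joint set to rule out $\nabla p\equiv 0$, so it absorbs both of your subcases uniformly. With the paper's choice of $d$, one gets $|J|\lesssim_n L_n\cdot L_1\cdots L_{n-1}/|J|^{n-2}$, which rearranges to \eqref{eq:proved}.

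There is also a more local problem. With $D\sim A_n$ fixed, your H\"older sum gives $\sum_\Omega |J\cap\Omega|\leq c_n(D+1)(L_1\cdots L_n)^{1/(n-1)}$; to make this a strict minority of $|J|>A_n(L_1\cdots L_n)^{1/(n-1)}$ you need $c_n(D+1)<A_n$, but the inductive hypothesis invoked in each cell concerns the same ambient dimension, so $c_n$ must be the very $A_n$ you are trying to establish, forcing the impossible $D+1<1$. (Moreover nothing guarantees $\sum_i L_i^\Omega<\sum_i L_i$, so the stated induction on the number of lines need not strictly descend.) Both difficulties disappear in the paper's scheme, because the degree is large and the cellular contradiction quotes the $\R^{n-1}$ theorem, not the $\R^n$ one.
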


We also generalise Theorem \ref{theoremmult2} to the case of multijoints formed by real algebraic curves. 

Our techniques are based on the polynomial method. More particularly, we use a probabilistic polynomial degree reduction argument in the case of $\mathbb{F}^3$, when $\mathbb{F}$ is an arbitrary field, and Guth--Katz partitioning in the case of $\R^n$ (see an informal discussion in the subsection that follows).\\

Now, let us go back to joints. 

\begin{definition} We say that a point $x \in \mathbb{F}^n$ is a \emph{generic joint} formed by a finite collection $\mathfrak{L}$ of lines in $\mathbb{F}^n$ if, whenever $n$ lines in $\mathfrak{L}$ pass through $x$, they form a joint there. For all $k \geq n$, we denote by $J^k(\mathfrak{L})$ the set of generic joints formed by a finite collection $\mathfrak{L}$ of lines in $\mathbb{F}^n$, each of which lies in at least $k$ and fewer than $2k$ lines of $\mathfrak{L}$.
\end{definition}

We want to know whether one can exploit the property of genericity, to find better estimates on the size of sets of generic joints than the ones that hold in the non-generic situation. 

Let us make a first attempt to answer this question. A standard probabilistic argument (see \cite{Carbery_Iliopoulou_14}) ensures that, thanks to the genericity hypothesis, \eqref{eq:basic} implies that \begin{equation} \label{eq:genericprobabilistic}|J^k(\mathfrak{L})| \lesssim_n \frac{L^{\frac{n}{n-1}}}{k^{\frac{n}{n-1}}},\end{equation}
for any finite collection $\mathfrak{L}$ of $L$ lines in $\R^n$, and any $k\geq n$ (indeed, by randomly choosing each line through each point of $J^k(\mathfrak{L})$ with probability $1/k$, we see that there exists a subcollection $\widetilde{\mathfrak{L}}$ of $\mathfrak{L}$, consisting of $\sim_n L/k$ lines, such that each point in a large proportion of $J^k(\mathfrak{L})$ lies in at least $n$ of these lines; now, due to the genericity hypothesis, the points in this large proportion of $J^k(\mathfrak{L})$ are joints formed by $\widetilde{\mathfrak{L}}$, and thus, by \eqref{eq:basic}, they number $\lesssim_n |\widetilde{\mathfrak{L}}|^{\frac{n}{n-1}}\sim_n \frac{{L}^{\frac{n}{n-1}}}{k^{\frac{n}{n-1}}}$.)

However, in $\R^2$, the Szemer\'edi--Trotter theorem (see next section) implies that
$$|J^k(\mathfrak{L})|\lesssim \frac{L^2}{k^3}+\frac{L}{k},
$$
which is a better estimate than \eqref{eq:genericprobabilistic} (note that the Szemer\'edi--Trotter theorem fails in finite-field settings). Moreover, an immediate consequence of the proof of the main point-line incidence theorem in \cite{Guth_Katz_2010} by Guth and Katz is that, in $\R^3$,
$$|J^k(\mathfrak{L})|\lesssim \frac{L^{3/2}}{k^2}+\frac{L}{k},
$$
which, again, is a better estimate than \eqref{eq:genericprobabilistic}. Therefore, the following question arises. \\ \\
\textbf{Question 2.} \textit{Is it true that $|J^{k}(\mathfrak{L})| \lesssim_n \frac{L^{\frac{n}{n-1}}}{k^{\frac{n+1}{n-1}}}+\frac{L}{k}$, for all $k \geq n$, for any collection $\mathfrak{L}$ of $L$ lines in $\R^n$?}\\

(Note that the above-mentioned proof of Guth and Katz makes use of the fact that an algebraic hypersurface in $\R^3$ contains a bounded number of critical lines, a fact which is not always true in higher dimensions.)

Here, we answer Question 2 in the affirmative:

\begin{theorem} \label{genericjoints} Let $n \geq 2$. Let $\mathfrak{L}$ be a finite collection of L lines in $\R^n$. Then,
$$|J^{k}(\mathfrak{L})| \leq c_n \left(\frac{L^{\frac{n}{n-1}}}{k^{\frac{n+1}{n-1}}}+\frac{L}{k}\right)$$ for all $k \geq n$, where $c_n$ is a constant depending only on $n$.
\end{theorem}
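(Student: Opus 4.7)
The strategy is to combine Guth--Katz polynomial partitioning with a tangent space argument on the zero set of the partitioning polynomial, closing the estimate by strong induction on $L$ (and, for the lower-dimensional strata, on dimension).

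\emph{Step 1: partitioning.} I apply the Guth--Katz partitioning theorem in $\R^n$ to produce a polynomial $P$ of degree $D$ (to be optimised) whose zero set $Z(P)$ divides $\R^n\setminus Z(P)$ into $\lesssim D^n$ open cells, each meeting $\lesssim L/D^{n-1}$ lines of $\mathfrak{L}$. A generic joint in the interior of a cell has all of its $\sim k$ lines meeting that cell, so by strong induction on $L$ the number of such joints in a cell $C_i$ is $\lesssim L_i^{n/(n-1)}/k^{(n+1)/(n-1)} + L_i/k$, where $L_i=|\mathfrak{L}\cap C_i|$. Summing over cells, and combining $\sum_i L_i\lesssim LD$ with the elementary bound $\sum_i L_i^{n/(n-1)}\leq (\max_i L_i)^{1/(n-1)} \sum_i L_i \lesssim L^{n/(n-1)}$, yields a total cell contribution of $\lesssim L^{n/(n-1)}/k^{(n+1)/(n-1)} + LD/k$.

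\emph{Step 2: joints on $Z(P)$.} The key geometric observation, made possible by genericity, is the following. If $x$ is a smooth point of $Z(P)$ and $l$ is a line through $x$ contained in $Z(P)$, then expanding $P$ along the parametrisation $x+tv$ of $l$ forces the direction $v$ to lie in the hyperplane $T_xZ(P)$. But since $x$ is a generic joint, every $n$ lines through it must span $\R^n$, which they cannot do if their directions all lie in $T_xZ(P)$. Hence at most $n-1$ lines through $x$ can lie in $Z(P)$, so at least $\sim k$ lines through $x$ must exit $Z(P)$. Each such exit line meets $Z(P)$ in at most $D$ points by B\'ezout, so the total number of transversal incidences is $\lesssim LD$, giving at most $\lesssim LD/k$ generic joints on the smooth part of $Z(P)$. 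Generic joints on $\mathrm{Sing}(Z(P))$, a variety of degree $\lesssim D^2$ and dimension $\leq n-2$, are treated by applying the argument recursively on the singular stratum --- effectively an induction on dimension.

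\emph{Step 3: optimisation and closing the induction.} Balancing the partitioning loss $LD/k$ against $L^{n/(n-1)}/k^{(n+1)/(n-1)}$ yields $D\sim L^{1/(n-1)}/k^{2/(n-1)}$, valid in the regime $L\gtrsim k^2$, and delivers the first term of the desired bound. In the complementary regime $L\lesssim k^2$ the second term $L/k$ dominates, and it follows from an elementary pair-counting argument since each joint accounts for $\binom{k}{2}$ distinct pairs of incident lines. The strong induction on $L$ closes once the constants are arranged so that the $LD/k$ contributions from cells and from the smooth part of $Z(P)$ are fully absorbed into the first term of the target through the optimal choice of $D$.

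The hardest part of the argument is managing the induction so that constants do not blow up across iterations. In particular, unlike in $\R^3$ where Guth and Katz exploit a structural result on the number of critical lines on an algebraic surface, in higher dimensions the singular locus of $Z(P)$ may itself contain large families of lines, making the inductive descent through successive singular strata the most delicate aspect of the proof.
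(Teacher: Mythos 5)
Your approach is genuinely different from the paper's, and the core of it — Guth--Katz partitioning plus a tangent-hyperplane argument on $Z(P)$ — contains a real idea. But there are two significant gaps and one structural mismatch worth pointing out.

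\textbf{The fatal gap: descent through singular strata.} Your Step~2 handles generic joints on the \emph{smooth} part of $Z(P)$ cleanly, since at a smooth point all $\geq n$ concurrent lines lying in $Z(P)$ would sit in the $(n-1)$-dimensional hyperplane $T_xZ(P)$ and hence could not span $\R^n$; so at most $n-1$ lines lie in $Z(P)$ and the rest exit, giving an incidence count $\lesssim LD$ by B\'ezout. The trouble begins with $\mathrm{Sing}(Z(P))$. When you ``apply the argument recursively on the singular stratum,'' the incidence count becomes $\lesssim L\cdot\deg Y_1$, where $Y_1 = \mathrm{Sing}(Z(P))$ has degree on the order of $D^2$ rather than $D$. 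With $D\sim L^{1/(n-1)}/k^{2/(n-1)}$, the contribution $LD^2/k$ already exceeds the target by a factor of $D>1$, and deeper strata $Y_2, Y_3,\ldots$ only compound this: there is no bound of the form $\deg Y_j \lesssim_n D$ to make the recursion close, and in dimensions $n>3$ a stratum can contain unboundedly many lines. You flag this as ``the most delicate aspect,'' but as stated it is not a delicacy — it is a genuine breakdown. The paper sidesteps this entirely: for joints on $Z(P)$ it invokes a Quilodr\'an-style iterative lemma (Lemma~\ref{quilodranmult}), which works with a \emph{minimal-degree} vanishing polynomial and removes lines one at a time. The minimality implicitly handles all multiplicity/singular considerations at once, with the degree never exceeding the initial $d$, so there is no degree blowup and no stratification. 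This is the decisive difference between the two proofs.

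\textbf{Two smaller gaps.} First, in Step~1 the bound $\sum_i L_i^{n/(n-1)}\leq(\max_i L_i)^{1/(n-1)}\sum_iL_i\lesssim L^{n/(n-1)}$ requires $\max_i L_i\lesssim L/D^{n-1}$, which the partitioning theorem does \emph{not} guarantee per cell. This is fixable by pruning: restrict the inductive hypothesis to cells meeting $\lesssim L/D^{n-1}$ lines (a constant fraction, by pigeonhole, since total line--cell incidences are $\lesssim LD$), and absorb the bad cells using the fact that each cell holds $\lesssim |J^k(\mathfrak{L})|/D^n$ joints. The paper avoids the issue entirely by locating a single full cell with few lines and deriving a contradiction, rather than summing. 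Second, in Step~3 the pair-counting bound $|J^k|\binom{k}{2}\leq\binom{L}{2}$ gives only $|J^k|\lesssim L^2/k^2$, which dominates $L/k$ throughout the intermediate range $k\lesssim L\lesssim k^2$ where you need it; pair counting alone yields the target only when $L\lesssim k$. The paper's induction is on the dimension $n$ rather than on $L$, and the regime $L\lesssim k^2$ is handled automatically by projecting to a generic hyperplane and invoking the $(n-1)$-dimensional bound (with base case $n=2$ given by Szemer\'edi--Trotter, which supplies exactly the missing $L^2/k^3+L/k$). Your induction on $L$ has no such fallback in that regime.

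In short: your tangent-space observation is the same geometric fact that underlies the paper's Lemma~\ref{firstpolyd}, but deploying it directly on $Z(P)$ and recursing through singular strata does not close because the stratum degrees grow uncontrollably. The paper's replacement — a single low-degree vanishing polynomial plus iterative line removal (Lemma~\ref{quilodranmult}), combined with induction on dimension via projection to a hyperplane — is what makes the argument work in all $n$.
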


We also generalise Theorem \ref{genericjoints} to the case of generic joints formed by real algebraic curves. Our techniques are based on Guth--Katz partitioning.\\

Note that the statement of Theorem \ref{genericjoints} does not always hold in the non-generic case in $\R^n$. Indeed, we now construct a counterexample in $\R^n$, where the joints are points in the lattice square $\{0,1,\ldots, N\}^{n-1}$ in the hyperplane $\{x_n=0\}$, each with a bush of rich (i.e. with a lot of these points) lines in $\{x_n=0\}$ through it, and a line in $\R^n$ through it orthogonal to the hyperplane: 

We say that a point $y \in \mathbb{Z}^d$ is \textit{visible} from another point $x \in \mathbb{Z}^d$ if the line segment in $\R^d$ connecting $x$ with $y$ does not contain any other point of $\mathbb{Z}^d$. For any $N \in \mathbb{Z}$, let $A_d(N)$ be the number of points in the lattice $\{0,1,\ldots,N\}^d$ that are visible from $0 \in \mathbb{Z}^d$. It is known (see \cite{Hardy_Wright}) that $\lim_{N \rightarrow \infty}\frac{A_d(N)}{N^d}=\frac{1}{\zeta(d)}$, where $\zeta$ is the Riemann zeta function. This means that, for $N$ sufficiently large, $\sim_d N^d$ points in $\{0,1,\ldots,N\}^d$ (i.e. pretty much all the points of that square lattice, up to multiplication with constants) are visible from the origin. In particular, for any fixed $\alpha \in (0,1)$ and $N$ large, there are $\sim_n N^{\alpha (n-1)}$ points in the square lattice $\{0,1,\ldots, N^{\alpha}\}^{n-1}$ that are visible from the origin. This means that, on the hyperplane $\{x_n=0\}$ in $\R^n$, there exist $\sim N^{\alpha (n-1)}$ lines through $0 \in \R^n$, each containing $\sim_n 1$ points of $(\{0,1,\ldots, N^{\alpha}\}^{n-1} \setminus \{0\})\times \{0\}$, and thus $\sim_n N^{1-\alpha}$ points of $\{0,1,\ldots, N\}^{n-1}\times\{0\}:=\Lambda$. Let $\mathfrak{L}_0$ be the set of these lines. Now, for each $y \in \Lambda$, we translate each line of $\mathfrak{L}_0$ so that it passes through $y$; let $\mathfrak{L}_y$ be the set of resulting lines. Now, if each line in $\mathfrak{L}_y$ intersects $[0,N]^{n-1}\times \{0\}$ at a line segment of length $\sim_n N$, we set $\mathfrak{L}'_y:=\mathfrak{L}_y$. Otherwise, we set $\mathfrak{L}'_y$ to be the set of lines on $\{x_n=0\}$ that are the reflections of the lines in $\mathfrak{L}_y$ with respect to the hyperplane $\{x_1=y_1\}$ in $\R^n$, where $y_1$ is the first coordinate of $y$; in this case, each of these reflected lines intersects $[0,N]^{n-1}\times \{0\}$ at a line segment of length $\sim_n N$. Therefore, each line in $\mathfrak{L}'_y$ contains $\sim_n N^{1-\alpha}$ points of $\Lambda$. We have therefore constructed a set $\mathfrak{L}'$ of $\sim_n \frac{N^{n-1}N^{\alpha(n-1)}}{N^{1-\alpha}}$ lines in the hyperplane $\{x_n=0\}$ in $\R^n$, such that each point of $\Lambda$ lies in $\sim_n N^{\alpha(n-1)}$ of the lines. Now, for each point of $\Lambda$, we consider a line in $\R^n$ orthogonal to the hyperplane, passing through the point. Let $\mathfrak{L}$ be the union of the set of these lines with $\mathfrak{L}'$. It is clear that $|\mathfrak{L}|\sim_n \frac{N^{n-1}N^{\alpha(n-1)}}{N^{1-\alpha}}+N^{n-1}$, $J(\mathfrak{L})=\Lambda$, and each point of $J(\mathfrak{L})$ lies in $\sim_n k :=N^{\alpha(n-1)}$ lines of $\mathfrak{L}$. And it is easy to see that, for $\alpha \in (\frac{1}{n+1}, 1)$, it does not hold that $|J(\mathfrak{L})| \lesssim_n \frac{L^{\frac{n}{n-1}}}{k^{\frac{n+1}{n-1}}}+\frac{L}{k}$. \\ \\
\textit{\textbf{Strategy.}} Our proofs will be carried out with the use of the polynomial method. Here we give a very rough description of our main idea. It is based on the following standard lemma, whose version in $\R^n$ was Quilodr\'an's basic argument for the solution of the joints problem in \cite{MR2594983}:

\begin{lemma}\label{firstpoly} Let $\mathbb{F}$ be a field, $n \geq 2$. Let $J$ be a set of joints formed by a collection $\mathfrak{L}$ of lines in $\mathbb{F}^n$. If $J$ lies in the zero set of some non-zero polynomial $p \in \mathbb{F}[x_1,\ldots,x_n]$, then there exists a line in $\mathfrak{L}$ containing at most $\deg p$ points of $J$.

\end{lemma}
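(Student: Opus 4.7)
The plan is to proceed by contradiction, choosing $p$ to be a non-zero polynomial of minimal degree vanishing on $J$ and assuming that every line in $\mathfrak{L}$ contains strictly more than $\deg p$ points of $J$. The first step is then immediate: for each $\ell \in \mathfrak{L}$, the restriction $p|_\ell$ is a univariate polynomial of degree at most $\deg p$ with strictly more than $\deg p$ roots, so $p$ must vanish identically on every line of $\mathfrak{L}$. Note that we may assume $\deg p \geq 1$, since a non-zero constant cannot vanish on the (non-empty) set $J$.

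The heart of the argument is to use the joint condition to force $\nabla p$ to vanish on $J$. Fix $x \in J$ and choose $n$ lines $\ell_1,\dots,\ell_n$ in $\mathfrak{L}$ through $x$ whose directions $v_1,\dots,v_n$ span $\mathbb{F}^n$. Expanding the identically zero polynomial $p(x+tv_i) \in \mathbb{F}[t]$ and reading off the coefficient of $t$ gives $v_i \cdot \nabla p(x) = 0$ for every $i$; since the $v_i$ span, $\nabla p(x) = 0$. Thus each partial $\partial_j p$ vanishes on the whole of $J$.

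Since $\deg \partial_j p < \deg p$, the minimality of $\deg p$ forces every $\partial_j p$ to be the zero polynomial. In characteristic zero this means $p$ is a non-zero constant, contradicting $\deg p \geq 1$. The main obstacle I foresee is the case of positive characteristic $q$, in which $\partial_j p \equiv 0$ for all $j$ is consistent with $p(x) = \tilde p(x_1^q,\dots,x_n^q)$ for some non-zero $\tilde p$ of strictly smaller degree $\deg p / q$. To finish in this case I would iterate the argument after applying the Frobenius map $F(x) = (x_1^q,\dots,x_n^q)$: the identity $(\det A)^q = \det A^{(q)}$ (valid in characteristic $q$) shows that $F$ preserves both collinearity and linear independence of direction vectors, so $F(J)$ is a set of joints of the lines $\{F(\ell) : \ell \in \mathfrak{L}\}$, each of which contains $|J \cap \ell|$ of those joints (as $F$ is injective on $\mathbb{F}^n$). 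Applying the preceding two paragraphs to $\tilde p$, $F(J)$ and $\{F(\ell)\}$, and pulling any conclusion back through $F$, either yields directly a non-zero polynomial of smaller degree vanishing on $J$ (and hence a contradiction to minimality) or strictly decreases the degree and repeats; this descent must terminate, completing the proof.
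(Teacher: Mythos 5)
Your proof is correct, but it takes a genuinely different route from the paper's in the positive-characteristic case. The paper disposes of the difficulty at the outset by reducing to the algebraic closure $\overline{\mathbb{F}}$ (lines extend, linear independence of direction vectors is preserved, so joints remain joints); over an algebraically closed field, $\nabla p = 0$ gives at once $p = g^{\mathrm{char}(\mathbb{F})}$ with $g$ of strictly smaller degree still vanishing on $J$, and minimality is contradicted in a single step. You instead remain over $\mathbb{F}$ and run a Frobenius descent: from $\nabla p = 0$ you write $p = \tilde p\circ F$ with $F$ the coordinatewise $q$-th power map, observe that $F$ carries the line through $a$ with direction $v$ into the line through $F(a)$ with direction $F(v)$ and preserves linear independence of direction vectors via $(\det A)^q = \det A^{(q)}$, so that $F(J)$ is again a joints configuration for lines each carrying more than $\deg\tilde p$ of its points, rerun the singularity argument for $\tilde p$ on $F(J)$, and pull back through $F$. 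Both routes are valid; yours trades the one-line reduction to $\overline{\mathbb{F}}$ for an iteration. Two points deserve a more careful write-up in your version. First, when $\mathbb{F}$ is not perfect, $F(\ell)$ is only a subset of the line through $F(a)$ with direction $F(v)$; this is harmless because all you need is that this line carries more than $\deg\tilde p$ points of $F(J)$, so that $\tilde p$ restricted to the line vanishes identically. Second, your closing paragraph is compressed to the point of hiding the key step: minimality of $p$ for $J$ does \emph{not} give minimality of $\tilde p$ for $F(J)$, so you cannot literally ``apply the preceding two paragraphs.'' What saves the argument, and should be said explicitly, is that if some $\partial_j\tilde p\neq 0$ then $(\partial_j\tilde p)\circ F$ is a non-zero polynomial of degree $q\,\deg\partial_j\tilde p < q\,\deg\tilde p = \deg p$ vanishing on $J$, contradicting the minimality of $p$; only when $\nabla\tilde p\equiv 0$ do you descend again, and since the degree drops by a factor $q$ each time the process terminates with a non-zero constant vanishing on a non-empty set, a contradiction.
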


In particular, Quilodr\'an deduced \eqref{eq:basic} in \cite{MR2594983}, combining Lemma \ref{firstpoly} for $\mathbb{F}=\R$ with the following Lemma, which was used by Dvir in \cite{MR2525780} for the solution of the Kakeya problem in finite fields. 

\begin{lemma} \label{dvir} Let $\mathbb{F}$ be a field, $n \geq 2$. For any finite set $\mathcal{P}$ of points in $\mathbb{F}^n$, there exists a non-zero polynomial $p \in \mathbb{F}[x_1,\ldots,x_n]$, of degree $\lesssim_n |\mathcal{P}|^{1/n}$, vanishing on $\mathcal{P}$.
\end{lemma}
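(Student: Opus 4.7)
The plan is a standard dimension-counting argument in linear algebra. Let $V_d$ denote the $\mathbb{F}$-vector space of polynomials in $\mathbb{F}[x_1,\ldots,x_n]$ of total degree at most $d$. A basis is given by the monomials $x_1^{a_1}\cdots x_n^{a_n}$ with $a_1+\cdots+a_n \leq d$, so $\dim_{\mathbb{F}} V_d = \binom{n+d}{n}$.

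Next, I would consider the evaluation map $T : V_d \to \mathbb{F}^{|\mathcal{P}|}$ defined by $T(p) = (p(x))_{x \in \mathcal{P}}$. This map is $\mathbb{F}$-linear, and its codomain has $\mathbb{F}$-dimension $|\mathcal{P}|$. So as soon as $\binom{n+d}{n} > |\mathcal{P}|$, the kernel of $T$ is non-trivial, producing a non-zero polynomial $p \in V_d$ that vanishes on every point of $\mathcal{P}$.

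It remains to choose $d$ small enough to obtain $d \lesssim_n |\mathcal{P}|^{1/n}$ while still guaranteeing $\binom{n+d}{n} > |\mathcal{P}|$. Using the standard lower bound $\binom{n+d}{n} \geq \frac{d^{n}}{n!}$, taking $d := \lceil (n!\cdot|\mathcal{P}|)^{1/n} \rceil + 1$ suffices, and this choice is clearly $\lesssim_n |\mathcal{P}|^{1/n}$.

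There is no real obstacle in this argument — it is purely formal once one sets up the right vector spaces. The only minor points to be careful about are that the kernel of $T$ must be genuinely non-trivial (which follows strictly from $\dim V_d > |\mathcal{P}|$, not merely $\geq$), and that the implicit constant in the conclusion is allowed to depend on $n$, which absorbs the factor of $(n!)^{1/n}$ appearing in the choice of $d$.
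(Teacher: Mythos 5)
Your argument is correct and is the standard parameter-counting proof of this lemma. The paper does not actually prove Lemma \ref{dvir}; it simply cites it as a known result from Dvir's work on the finite-field Kakeya problem, and the argument given there is precisely the one you have written (evaluation map $V_d\to\mathbb{F}^{|\mathcal{P}|}$, nontrivial kernel once $\binom{n+d}{n}>|\mathcal{P}|$, and the bound $\binom{n+d}{n}>d^n/n!$ to choose $d\lesssim_n|\mathcal{P}|^{1/n}$).
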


Quilodr\'an's proof of \eqref{eq:basic} consists in applying Lemma \ref{dvir} for a set of joints in $\R^n$, taking out of the set of lines the one that contains few joints, together with the joints it contains, and continuing iteratively until all the joints are removed; eventually we see that in total the joints cannot be too many, since in each step only few joints were removed. More precisely, this iterative process, which has been generalised in any field setting (see \cite{Dvir_12}, \cite{Tao_14}), shows that, for any finite collection $\mathfrak{L}$ of lines in $\mathbb{F}^n$, $|J(\mathfrak{L})|$ is equal to at most the number of steps in the iterative process times the maximal number of joints that can be removed in each step, i.e. $|J(\mathfrak{L})|  \lesssim_n |\mathfrak{L}| |J(\mathfrak{L})|^{1/n}$, and thus $|J(\mathfrak{L})| \lesssim_n |\mathfrak{L}|^{\frac{n}{n-1}}$.

When counting multijoints, we would like to again exploit Lemma \ref{firstpoly}, and deduce our desired estimates via a similar iterative process. In particular, if $\mathfrak{L}_1,\ldots, \mathfrak{L}_n$ are finite collections of $L_1,\ldots, L_n$, respectively, lines in $\mathbb{F}^n$, with $L_1\leq\ldots\leq L_n$, then we know by Lemma \ref{dvir} that there exists a non-zero polynomial $p$, of degree $\lesssim_n |J(\mathfrak{L}_1,\ldots,\mathfrak{L}_n)|^{1/n}$, vanishing on $J(\mathfrak{L}_1,\ldots,\mathfrak{L}_n)$. However, using such a polynomial and applying an iterative process as in the case of joints with the help of Lemma \ref{firstpoly}, we get that $|J(\mathfrak{L}_1,\ldots,\mathfrak{L}_n)|\leq \deg p \;(L_1+\ldots+L_n)\lesssim_n |J(\mathfrak{L}_1,\ldots,\mathfrak{L}_n)|^{1/n} (L_1+\ldots +L_n)$, and so $|J(\mathfrak{L}_1,\ldots,\mathfrak{L}_n)| \lesssim_n (L_1+\ldots+L_n)^{\frac{n}{n-1}}$, an upper bound worse than $(L_1\cdots L_n)^{\frac{1}{n-1}}$. The reason is that the degree of our vanishing polynomial is too large for us to get our desired bound in this case. So, we need to find a polynomial of degree considerably lower than $|J(\mathfrak{L}_1,\ldots,\mathfrak{L}_n)|^{1/n}$, whose zero set vanishes on at least a large proportion of our set of multijoints, and then hope that the iterative process will give the desired estimate. \footnote{Note that there exists a polynomial in $\mathbb{F}[x_1,\ldots, x_n]$, of degree $\lesssim L_1^{1/(n-1)}$, vanishing on all the lines of $\mathfrak{L}_1$, and thus on $J(\mathfrak{L}_1,\ldots,\mathfrak{L}_n)$ (see Lemma \ref{guth}). However, this degree is again too large to imply the estimate we want; we would only get $|J(\mathfrak{L}_1,\ldots,\mathfrak{L}_n)|\lesssim_n L_1^{1/(n-1)}L_n$.} In the case of multijoints in $\mathbb{F}^3$, this will be achieved via a probabilistic polynomial degree reduction argument, similar to the ones appearing in \cite{Guth_Katz_2010} and \cite{Guth_14}. In the case of multijoints in $\R^n$, it will be done using Guth--Katz polynomial partitioning. More precisely, we will show that, if the desired multijoints estimate holds in one dimension lower (which implies an estimate on our multijoints involving $n-1$ of our collections of lines), then a large proportion of our set of multijoints will be lying on the zero set of a polynomial of low enough degree; the fact that our desired estimate obviously holds in $\R^2$ allows us to close the induction.

In the case of generic joints in $\mathbb{F}^n$, we have shown in \cite{Carbery_Iliopoulou_14} that, using Lemma \ref{firstpoly} and Quilodr\'an's argument above, but counting each joint as many times as the lines passing through it, we only get \eqref{eq:genericprobabilistic} (note that a generic joint formed by a collection of lines remains a joint after the removal of any line through it, as long as there remain at least $n$ lines passing through it). Therefore, we again wish to find a non-zero polynomial of considerably lower degree, vanishing on at least a large proportion of our set of joints; and then this Quilodr\'an-style argument will imply the estimate we want. We will find such a polynomial in $\R^n$ using Guth--Katz partitioning. Similarly to the multijoints case, we will show that, if our desired estimate holds in one dimension lower, then a large proportion of our set of joints will lie on the zero set of a non-zero polynomial of low enough degree. The fact that our desired estimate holds in $\R^2$ (by the Szemer\'edi--Trotter theorem) closes the induction.\\ \\
\textit{\textbf{Connection with Kakeya conjectures.}} We would like to conclude this section by explaining the connection of joints and multijoints with Kakeya conjectures in harmonic analysis. In particular, the joints and multijoints problems are discrete analogues of, respectively, the maximal Kakeya operator conjecture and the endpoint multilinear Kakeya problem (the latter was solved by Guth in \cite{Guth_10}):\\ \\
\textbf{Maximal Kakeya operator conjecture.} \textit{Let $T_{\omega}$, for $\omega \in \Omega \subset \mathbb{S}^{n-1}$, be a tube in $\R^n$, with direction $\omega$, length $1$ and cross-section an $(n-1)$-dimensional ball of radius $\delta$. If the set $\Omega$ of directions is a $\delta$-separated subset of $\;\mathbb{S}^{n-1}$, then}
\begin{equation}\label{eq:maximalkakeya}\int_{\R^n}\#\{T_{\omega}\text{'s through }x\}^{\frac{n}{n-1}}\;{\rm{d}} x \lesssim_n \log \left(\frac{1}{\delta}\right)\cdot \delta^{n-1}\#\{T_{\omega}\text{'s}\}.
\end{equation}

The endpoint multilinear Kakeya problem is a multilinear version of the maximal Kakeya operator conjecture, and was solved by Guth in \cite{Guth_10}, who improved the already existing result by Bennett, Carbery and Tao (see \cite{MR2275834}).\\ \\
\textbf{Endpoint multilinear Kakeya theorem. (Guth, \cite{Guth_10})} \textit{Let $\mathbb{T}_1$, ..., $\mathbb{T}_n$ be $n$ essentially transversal \footnote{The expression ``essentially transversal" means that, for all $i=1,...,n$, the direction of each tube in the family $\mathbb{T}_i$ lies in a fixed $\frac{c}{n}$-cap around the vector $e_i \in \R^n$, where the vectors $e_1$, ..., $e_n$ are orthonormal.} families of doubly-infinite tubes in $\mathbb{R}^n$, with cross section an $(n-1)$-dimensional unit ball. Then,}
$$\int_{x \in \R^n}\big(\#\{\text{tubes of }\mathbb{T}_1\text{ through }x\}\cdots \#\{\text{tubes of }\mathbb{T}_n\text{ through }x\}\big)^{1/(n-1)}{\rm d}x $$
\begin{equation} 
\label{eq:guthstrong}\lesssim_n (|\mathbb{T}_1| \cdots |\mathbb{T}_n|)^{1/(n-1)}.\end{equation}

In particular, if $E(\mathbb{T}_1,\cdots,\mathbb{T}_n)$ is the set of points in $\R^n$ each lying in at least one tube of the family $\mathbb{T}_i$ for all $i=1,...,n$, \eqref{eq:guthstrong} implies that
\begin{equation} \label{eq:guthsimple} vol_n(E(\mathbb{T}_1,...,\mathbb{T}_n)) \lesssim_n (|\mathbb{T}_1| \cdots |\mathbb{T}_n|)^{1/(n-1)}.
\end{equation}

It is natural to ask what the analogues of the above would be, if the tubes were shrunk to lines. To formulate such a discrete analogue of the maximal Kakeya operator conjecture, one could ask whether it is possible to have that, for any collection $\mathfrak{L}$ of $L$ lines in $\mathbb{F}^n$, \begin{equation} \label{eq:unknownjoints}\sum_{x \in J(\mathfrak{L})}\#\{\text{lines in }\mathfrak{L}\text{ through }x\}^{\frac{n}{n-1}}\lesssim_n L^{\frac{n}{n-1}}.\end{equation}
Without the genericity hypothesis, this inequality clearly fails in finite-field settings, but we believe it may be true when $\mathbb{F}$ has characteristic 0. Let us mention here that \eqref{eq:unknownjoints} has been recently proved for generic joints in any field setting by Hablicsek (in \cite{Hablicsek_14}). Moreover, we have shown \eqref{eq:unknownjoints} in $\R^3$ in \cite{Iliopoulou_12}, without the genericity hypothesis, but with an extra logarithmic factor of $L$ in its right-hand side, which we believe is not necessary (the logarithmic factor of $1/\delta$ on the right-hand side of \eqref{eq:maximalkakeya} reflects the fact that two tubes may intersect a lot; two lines, on the other hand, intersect at at most one point).

However, Theorem \ref{genericjoints} which we prove in this paper implies that, in $\R^n$, the genericity hypothesis ensures much better bounds than \eqref{eq:unknownjoints}: 

\begin{corollary} Let $\mathfrak{L}$ be a finite collection of $L$ lines in $\R^n$, $n \geq 2$. For each $x \in J(\mathfrak{L})$, we denote by $l(x)$ the number of lines of $\mathfrak{L}$ passing through $x$. Then,
\begin{equation}\label{eq:knowngeneric} \sum_{\{x\in J(\mathfrak{L}):\;x\text{ generic and }l(x)\lesssim_n L^{1/2}\}}l(x)^q\lesssim_{n,q} L^{\frac{n}{n-1}}\text{, for each }0\leq q < \frac{n+1}{n-1},
\end{equation}
and
\begin{equation}\label{eq:less}\sum_{\{x\in J(\mathfrak{L}):\;x\text{ generic and }l(x)\gtrsim_n L^{1/2}\}}l(x)^q\lesssim_{n,q} L^q\text{, for each }q >1.
\end{equation}
\end{corollary}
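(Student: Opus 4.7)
The plan is a straightforward dyadic decomposition of the set of generic joints according to the line multiplicity $l(x)$, fed into Theorem \ref{genericjoints}. For $j \geq 0$ set $k_j = 2^j n$, and partition the generic joints as $\bigsqcup_j J^{k_j}(\mathfrak{L})$, so that $l(x)^q \sim_n k_j^q$ on the $j$-th band. Theorem \ref{genericjoints} then yields
\[
\sum_{x \in J^{k_j}(\mathfrak{L})} l(x)^q \lesssim_n L^{\frac{n}{n-1}}\, k_j^{\,q - \frac{n+1}{n-1}} + L\, k_j^{\,q-1}.
\]
The two terms in this bound balance precisely at $k_j \sim L^{1/2}$, which is exactly the threshold at which the sums in the corollary have been split; the rest of the argument reduces to summing the dominant term of a geometric series in each regime.

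For \eqref{eq:knowngeneric} I would sum the above over those $j$ with $k_j \lesssim_n L^{1/2}$. The first term has a negative exponent in $k_j$, because $q < \frac{n+1}{n-1}$, so the resulting geometric series is dominated by its value at $k_j = n$, which is $\lesssim_{n,q} L^{\frac{n}{n-1}}$. The second term, $L \sum k_j^{q-1}$, is a geometric (or for $q=1$ logarithmic) sum capped at $k_j \sim L^{1/2}$, and in the worst case evaluates to at most $L^{(q+1)/2}$; the strict inequality $q < \frac{n+1}{n-1}$ is precisely $(q+1)/2 < \frac{n}{n-1}$, so this contribution too is $\lesssim_{n,q} L^{\frac{n}{n-1}}$.

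For \eqref{eq:less} I would sum instead over $k_j \gtrsim_n L^{1/2}$. In this high-$k$ regime the $L/k_j$ term dominates the first term of Theorem \ref{genericjoints}, so each band contributes $\lesssim_n L\, k_j^{q-1}$; since $q > 1$ this series is increasing in $j$, capped by $k_j \leq L$ (from the trivial bound $l(x) \leq L$), and is therefore dominated by its last term, giving $\lesssim_q L \cdot L^{q-1} = L^q$. There is no real obstacle here — the entire argument is bookkeeping — and the role of the restrictions on $q$ is simply to place $(q+1)/2$ strictly below $n/(n-1)$ for \eqref{eq:knowngeneric}, and to make the sum over $k_j \gtrsim L^{1/2}$ geometric-increasing in $j$ for \eqref{eq:less}.
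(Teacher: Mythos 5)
Your argument is correct, and it is the natural derivation of the corollary from Theorem \ref{genericjoints}: partition the generic joints into dyadic bands $J^{k_j}(\mathfrak{L})$, bound each band's contribution to $\sum l(x)^q$ by $k_j^q |J^{k_j}(\mathfrak{L})|$, apply Theorem \ref{genericjoints}, and sum the resulting two geometric series, using that the crossover between the two terms $L^{n/(n-1)}/k^{(n+1)/(n-1)}$ and $L/k$ occurs precisely at $k \sim L^{1/2}$. The paper offers no separate proof of this corollary — it presents it as an immediate consequence of Theorem \ref{genericjoints} — so there is no alternative argument in the source against which to contrast; what you wrote is exactly the bookkeeping the paper leaves implicit, including the observation that the constraint $q < \frac{n+1}{n-1}$ is what makes $(q+1)/2 < \frac{n}{n-1}$ in \eqref{eq:knowngeneric}, and that $q>1$ makes the tail sum in \eqref{eq:less} increase to its endpoint $k_j \leq L$. (One very minor imprecision: for $q<1$ in \eqref{eq:knowngeneric} the second sum is bounded by $L$ rather than $L^{(q+1)/2}$, but this is still $\lesssim_n L^{n/(n-1)}$, so the conclusion is unaffected.)
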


In other words, \eqref{eq:knowngeneric} holds when the exponent $q$ on the left-hand side is considerably larger than the exponent on the right-hand side \footnote{It might be interesting to investigate to what extent inequalities such as \eqref{eq:knowngeneric} in $\R^n$, where the exponent on the left-hand side is considerably larger than $\frac{n}{n-1}$, relate to Wolff's heuristic in \cite{MR1660476} that joints estimates imply lower bounds on the Minkowski dimension of Kakeya sets; in the heuristic, an estimate with exponent $\frac{n}{n-1}$ is being used, which arises via a probabilistic argument similar to the one giving \eqref{eq:genericprobabilistic}.}, while in \eqref{eq:less} the exponents on both sides are equal but considerably lower than $\frac{n}{n-1}$. Note that, as we have explained earlier, Hablicsek has proved \eqref{eq:knowngeneric} in any field setting, however for $0\leq q\leq \frac{n}{n-1}$, and \eqref{eq:less} in any field setting, but for $q=\frac{n}{n-1}$.

Let us finally mention that it is conjectured by Carbery that, for any field $\mathbb{F}$ and any $n \geq 3$,
$$\sum_{x \in J(\mathfrak{L})} N(x)^{\frac{1}{n-1}} \lesssim_n L^{\frac{n}{n-1}},
$$
where $N(x)$ denotes the number of linearly independent $n$-tuples of lines of $\mathfrak{L}$ through $x$. Note that $N(x)\lesssim_n \#\{$lines of $\mathfrak{L}$ through $x\}^n$ (with equality up to multiplication with constants only for generic joints), and thus Theorem \ref{genericjoints} implies that the statement of the conjecture is true in the generic case in $\R^n$.

As for our multijoints estimate in $\R^n$, it is clearly a discrete analogue of \eqref{eq:guthsimple}.\\ \\
\textbf{Remark.} It is also conjectured by Carbery that, for any transversal collections $\mathfrak{L}_1, \ldots, \mathfrak{L}_n$ of $L_1, \ldots, L_n$ lines in $\mathbb{F}^n$ (transversal in the sense that, whenever $n$ lines, one from each collection, meet at a point, then they form a joint there),
 then 
\begin{equation} \label{eq:multwithmultiplicities}\sum_{x\in  J(\mathfrak{L}_1,\ldots,\mathfrak{L}_n)}\left(\#\{\text{ lines of }\mathfrak{L}_1\text{ through } x\}\cdots\#\{\text{ lines of }\mathfrak{L}_n\text{ through } x\}\right)^{\frac{1}{n-1}} \lesssim_n
\end{equation}
$$\lesssim_n (L_1\cdots L_n)^{\frac{1}{n-1}},
$$ 
an inequality which is the discrete analogue of Guth's endpoint multilinear Kakeya theorem. In \cite{Iliopoulou_13}, we have essentially shown \eqref{eq:multwithmultiplicities} in $\R^3$. More precisely, we have proved that, if $\mathfrak{L}_1$, $\mathfrak{L}_2$, $\mathfrak{L}_3$ are transversal, finite collections of $L_1$, $L_2$ and $L_3$, respectively, lines in $\R^3$, and, for each $x \in J(\mathfrak{L}_1,\mathfrak{L}_2, \mathfrak{L}_3)$ and $i=1,2,3$, $N_i(x)$ denotes the number of lines of $\mathfrak{L}_i$ passing through $x$, then
$$\sum_{\{x \in J(\mathfrak{L}_1,\mathfrak{L}_2,\mathfrak{L}_3):\;N_m(x)>10^{12}\}} \big(N_1(x)N_2(x)N_3(x)\big)^{1/2} \lesssim (L_1L_2L_3)^{1/2},
$$
where $m \in \{1,2,3\}$ is such that $L_m=\min\{L_1,L_2,L_3\}$. We have not yet managed, however, to show that$$\sum_{x \in J(\mathfrak{L}_1,\mathfrak{L}_2,\mathfrak{L}_3)} \big(N_1(x)N_2(x)N_3(x)\big)^{1/2} \lesssim (L_1L_2L_3)^{1/2},
$$
so we will not focus on this result here; details can be found in \cite{Iliopoulou_13}. Let us just mention that the proof of the particular result uses facts from computational geometry that hold only in $\R^3$, which is why we have not managed to apply it to higher dimensions or different field settings.

$\;\;\;\;\;\;\;\;\;\;\;\;\;\;\;\;\;\;\;\;\;\;\;\;\;\;\;\;\;\;\;\;\;\;\;\;\;\;\;\;\;\;\;\;\;\;\;\;\;\;\;\;\;\;\;\;\;\;\;\;\;\;\;\;\;\;\;\;\;\;\;\;\;\;\;\;\;\;\;\;\;\;\;\;\;\;\;\;\;\;\;\;\;\;\;\;\;\;\;\;\;\;\;\;\;\;\;\;\;\;\;\;\;\blacksquare$ 

\textbf{Acknowledgements.} I would like to thank Anthony Carbery, for introducing me to the questions dealt with in this paper, and for his support. I am also very grateful to M\'arton Hablicsek, for communicating to me his work and thoughts. Part of this research was performed while visiting the Institute for Pure and Applied Mathematics (IPAM), which is supported by the National Science Foundation. This reasearch was supported by European Research Council Grant 307617. I would also like to thank the anonymous referee, for their very careful reading of the paper, and for finding certain mistakes in the original version.

\section{First steps and preliminaries}

As we have already mentioned, the basis for our proofs will be the following lemma, which is a generalisation, in $\mathbb{F}^n$, of Quilodr\'an's main idea in \cite{MR2594983} for the solution of the joints problem in $\R^n$.

\begin{lemma} \label{firstpolyd} Let $\mathbb{F}$ be a field, $n \geq 2$. Let $J$ be a set of joints formed by a collection $\mathfrak{L}$ of lines in $\mathbb{F}^n$. If $J$ lies in the zero set of some non-zero polynomial in $\mathbb{F}[x_1,\ldots,x_n]$, of degree at most $d$, then there exists a line in $\mathfrak{L}$ containing at most $d$ points of $J$.
\end{lemma}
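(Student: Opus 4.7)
The approach is Quilodr\'an's minimal-degree polynomial argument. I would assume for contradiction that every line of $\mathfrak{L}$ contains at least $d+1$ points of $J$, and choose a non-zero polynomial $p \in \mathbb{F}[x_1, \ldots, x_n]$ of the smallest possible degree $d' \leq d$ vanishing on $J$.

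First, I would observe that $p$ must vanish identically on every line $\ell \in \mathfrak{L}$: parametrising $\ell$ as $t \mapsto x_0 + tv$, the restriction $p|_\ell$ is a univariate polynomial in $t$ of degree at most $d'$ which vanishes at the $\geq d+1 \geq d'+1$ parameter values corresponding to points of $J \cap \ell$, hence is identically zero.

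Next, I would exploit the joint structure to force every formal partial derivative of $p$ to vanish on $J$. At each joint $x \in J$, there are $n$ lines in $\mathfrak{L}$ through $x$ with directions $v_1, \ldots, v_n$ spanning $\mathbb{F}^n$, and $p$ vanishes on each. Since the polynomial $p(x + t v_i)$ in $t$ is identically zero, its coefficient of $t$, namely $\nabla p(x) \cdot v_i$, vanishes for every $i$. As the matrix with columns $v_1, \ldots, v_n$ is invertible over $\mathbb{F}$, this forces $\partial_{x_j} p(x) = 0$ for every $j$. Therefore every $\partial_{x_j} p$ has degree $< d'$ and vanishes on $J$; in characteristic zero, at least one $\partial_{x_j} p$ is non-zero (since $d' \geq 1$), which contradicts the minimality of $d'$.

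The main obstacle is the case of positive characteristic $q$, in which every formal partial derivative of a non-constant polynomial may vanish identically (for instance $p = x_1^q$); in that situation $p \in \mathbb{F}[x_1^q, \ldots, x_n^q]$. Over a perfect field, $p$ is then a $q$-th power, whose $q$-th root has strictly smaller degree, still vanishes on $J$, and contradicts minimality. Over a general field, a Frobenius twist---writing $p = r \circ \phi$ with $\phi(x) = (x_1^q, \ldots, x_n^q)$, and using $\det(v_i^{(q)}) = \det(v_i)^q \neq 0$ to verify that $\phi(J)$ remains a set of joints of the correspondingly twisted line configuration---reduces the problem to a strictly lower-degree instance, so iterating produces a contradiction. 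Alternatively, Hasse derivatives offer a characteristic-uniform formulation of the entire argument.
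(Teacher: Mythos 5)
Your proof follows the same skeleton as the paper's: take a non-zero $p$ of minimal degree vanishing on $J$, observe $p$ must vanish on each line of $\mathfrak{L}$ (since each contains more than $\deg p$ points of $J$), use the spanning directions at each joint to force the formal gradient $\nabla p$ to vanish on $J$, and by minimality conclude $\nabla p = 0$, which is only possible if $p$ is a $q$-th power (in characteristic $q$) or constant (in characteristic $0$), yielding a contradiction.

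Where you diverge is in handling positive characteristic over a non-perfect field. The paper dispatches this at the outset by passing WLOG to the algebraic closure $\overline{\mathbb{F}}$ (extending each line, noting a joint stays a joint since a linearly independent $n$-tuple in $\mathbb{F}^n$ stays linearly independent in $\overline{\mathbb{F}}^n$), so the field is automatically perfect and $\nabla p = 0 \Rightarrow p = g^{\operatorname{char}\mathbb{F}}$ applies directly. Your alternative is a Frobenius twist: if all the formal partials vanish then $p \in \mathbb{F}[x_1^q, \ldots, x_n^q]$, so $p = r \circ \phi$ with $\phi(x) = (x_1^q, \ldots, x_n^q)$ and $\deg r = (\deg p)/q$; you check $\phi(J)$ is a set of joints for the twisted line configuration $\{ \text{line through } \phi(x_0) \text{ with direction } \phi(v) \}$ via $\det(v_{ij}^q) = (\det v_{ij})^q$, that the "more than $d$ points per line" hypothesis survives because $\phi$ is injective, and iterate. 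This works (it is most cleanly organised as an induction on the degree), but it is strictly more elaborate than the paper's one-line reduction to $\overline{\mathbb{F}}$, and the details about $\phi(\ell)$ being only a subset of the twisted line over a non-perfect field deserve a sentence. One small quibble: your closing remark that Hasse derivatives give a "characteristic-uniform formulation" is slightly off. First-order Hasse derivatives coincide with the ordinary first-order partials (the paper's $\nabla p$ is in fact the Hasse gradient), and the genuine difficulty --- that a non-constant polynomial in characteristic $q$ can have identically vanishing gradient --- is not removed by using them; it must still be handled by the $q$-th-power/algebraic-closure step or your Frobenius twist.
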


The proof of Lemma \ref{firstpolyd} makes use of the notion of the \textit{formal} (or \textit{Hasse}) \textit{derivatives} of a polynomial. More precisely, the \textit{formal} (or \textit{Hasse}) \textit{gradient} of a polynomial $p \in \mathbb{F}[x_1,\ldots, x_n]$ is the element $\nabla{p}:=(p_1,\ldots,p_n)$ of $(\mathbb{F}[x_1,\ldots,x_n])^n$, where, for $i=1,\ldots,n$, the \textit{$i$-th formal derivative} $p_i$ of $p$ is the coefficient of $z_i$ in $p(x+z)$ as a polynomial in $\mathbb{F}[x_1,\ldots,x_n,z_1,\ldots,\hat{z_i},\ldots,z_n]$ (e.g., see \cite{MR2648400}). Note that, when $\mathbb{F}=\R$, the formal and the usual partial derivatives of a polynomial agree. Moreover, in analogy to the situation in $\mathbb{C}$, it is easy to see that, for any algebraically closed field $\mathbb{F}$, if $p \in \mathbb{F}[x_1,\ldots,x_n]$ satisfies $\nabla{p}=0$, then $p=g^{char(\mathbb{F})}$, for some $g \in \mathbb{F}[x_1,\ldots,x_n]$ (e.g., see \cite{Carbery_Iliopoulou_14} for full details).\\ \\
\textit{Proof of Lemma \ref{firstpolyd}.} We can assume, without loss of generality, that $\mathbb{F}$ is algebraically closed. Indeed, if $\overline{\mathbb{F}}$ denotes the algebraic closure of $\mathbb{F}$, we can extend each line in $\mathfrak{L}$ to a line in $\overline{\mathbb{F}}^n$ in the obvious way, and thus form a set of lines $\overline{\mathfrak{L}}$ in $\overline{\mathbb{F}}^n$; then, each joint formed by $\mathfrak{L}$ in $\mathbb{F}^n$ is a joint formed by $\overline{\mathfrak{L}}$ in $\overline{\mathbb{F}}^n$, since a set of $n$ vectors that is linearly independent in $\mathbb{F}^n$ is linearly independent in $\overline{\mathbb{F}}^n$. 

Now, let $p \in \mathbb{F}[x_1, \dots, x_n]$ be a polynomial of minimal degree vanishing on $J$. By assumption, $\deg p \leq d$.

Suppose that each line $l \in \mathfrak{L}$ contains more than $d$ elements of $J$. Then, $p$ vanishes on each line in $\mathfrak{L}$, and therefore the elements of $J$, being joints formed by $\mathfrak{L}$, are singular points of the zero set of $p$. Therefore, $\nabla{p}$ vanishes on $J$, and thus $\nabla{p}=0$, otherwise the minimality of the degree of $p$ would be contradicted. So, $p$ is equal to $g^{char(\mathbb{F})}$, for some $g \in \mathbb{F}[x_1,\ldots, x_n]$. This contradicts the minimality of the degree of $p$ if $char(\mathbb{F})\neq 0$, while, if $char(\mathbb{F})=0$, it implies that $p$ is a constant polynomial, thus the zero polynomial (for $J \neq \emptyset$), which is a contradiction.

\qed

Using Lemma \ref{firstpolyd} in $\mathbb{F}^n$ in exactly the same way as Quilodr\'an uses it in $\R^n$ in \cite{MR2594983}, we deduce the following joints estimate, and the subsequent immediate multijoints estimate.

\begin{lemma} \label{quilodran} Let $\mathfrak{L}$ be a finite collection of lines in $\mathbb{F}^n$, $n \geq 2$, where $\mathbb{F}$ is an arbitrary field. If $J$ is a subset of $J(\mathfrak{L})$, such that there exists a non-zero polynomial in $\mathbb{F}[x_1,\dots, x_n]$, of degree at most $d$, vanishing on $J$, then $$|J| \leq |\mathfrak{L}| \cdot d. $$

\end{lemma}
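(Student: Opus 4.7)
The plan is to mimic Quilodrán's iterative removal argument directly, using Lemma \ref{firstpolyd} as the engine. Let $p$ be the fixed non-zero polynomial of degree at most $d$ vanishing on $J$. Initialise $J_0 := J$ and $\mathfrak{L}_0 := \mathfrak{L}$, and I would maintain through the iteration the invariants that (i) $J_i \subseteq J(\mathfrak{L}_i)$ and (ii) $J_i$ lies in the zero set of $p$. Both hold at step $0$ by hypothesis.

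At step $i$, assuming $J_i \neq \emptyset$ (which forces $|\mathfrak{L}_i| \geq n$, as $J(\mathfrak{L}_i)$ is empty otherwise), I would apply Lemma \ref{firstpolyd} to $J_i \subseteq J(\mathfrak{L}_i)$ with the polynomial $p$, obtaining a line $l_i \in \mathfrak{L}_i$ with $|l_i \cap J_i| \leq d$. Then set $\mathfrak{L}_{i+1} := \mathfrak{L}_i \setminus \{l_i\}$ and $J_{i+1} := J_i \setminus l_i$.

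The one thing to verify is that the invariants propagate. Invariant (ii) is immediate since $J_{i+1} \subseteq J_i$. For (i), take $x \in J_{i+1}$: by construction $x \notin l_i$, and since $x \in J_i \subseteq J(\mathfrak{L}_i)$, there exist $n$ lines of $\mathfrak{L}_i$ through $x$ spanning $\mathbb{F}^n$; none of them equals $l_i$ (as $x$ is on all of them but not on $l_i$), so they all survive in $\mathfrak{L}_{i+1}$, witnessing $x \in J(\mathfrak{L}_{i+1})$.

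The iteration must terminate: either $J_i$ becomes empty, or after at most $|\mathfrak{L}|$ steps the collection $\mathfrak{L}_i$ has fewer than $n$ lines, forcing $J(\mathfrak{L}_i)=\emptyset$ and hence $J_i=\emptyset$ by invariant (i). At each step at most $d$ points of $J$ are removed, and there are at most $|\mathfrak{L}|$ steps, so $|J| \leq |\mathfrak{L}| \cdot d$. There is no real obstacle here; the only subtle point is checking that the joint property is preserved after deletion of a line, which works precisely because we also delete from $J_i$ all points incident to the removed line.
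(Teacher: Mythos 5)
Your proof is correct and follows essentially the same iterative removal argument as the paper: at each step apply Lemma \ref{firstpolyd} to find a line with at most $d$ surviving points, delete that line together with those points, and observe that the remaining points are still joints for the remaining lines. The only cosmetic difference is that you make the invariants and the termination argument fully explicit, which the paper leaves partly implicit.
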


\begin{proof} By Lemma \ref{firstpolyd}, there exists a line $l_1 \in \mathfrak{L}$ containing at most $d$ elements of $J$. We remove $l_1$ and the elements of $l_1 \cap J$ from our collection of lines and joints, respectively. The elements of $J_1:=J\setminus (l_1\cap J)$ are joints formed by $\mathfrak{L}\setminus \{l_1\}$. Now, there exists a non-zero polynomial in $\mathbb{F}[x_1,\dots, x_n]$, of degree at most $d$, vanishing on $J_1$ (since the same holds for the set $J$). Again by Lemma \ref{firstpolyd}, there exists $l_2 \in \mathfrak{L}\setminus \{l_1\}$, containing at most $d$ elements of $J_1$. We remove $l_2$ and the elements of $l_2\cap J_1$ from our collection of lines and joints, respectively.

We continue as above, until we have removed enough lines of $\mathfrak{L}$ (and the corresponding elements of $J$ on each) so that no elements of $J$ are remaining. This is achieved in at most $|\mathfrak{L}|$ steps, and $J$ is the union of its subsets that were removed in each step, each of which has size at most $d$. Therefore, $|J| \leq |\mathfrak{L}| \cdot d$. 

\end{proof}

\begin{corollary} \label{quilodransimple} Let $\mathfrak{L}_1,\ldots,\mathfrak{L}_n$ be finite collections of lines in $\mathbb{F}^n$, $n \geq 2$, where $\mathbb{F}$ is an arbitrary field. If $J$ is a subset of $J(\mathfrak{L}_1,\ldots,\mathfrak{L}_n)$, such that there exists a non-zero polynomial in $\mathbb{F}[x_1,\dots, x_n]$, of degree at most $d$, vanishing on $J$, then $$|J| \leq |\mathfrak{L}_1\cup\ldots\cup\mathfrak{L}_n| \cdot d. $$

\end{corollary}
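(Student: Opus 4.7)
The plan is to reduce the corollary to Lemma \ref{quilodran} by simply taking the union $\mathfrak{L} := \mathfrak{L}_1 \cup \ldots \cup \mathfrak{L}_n$ as the single collection of lines and checking that multijoints become joints in this larger collection.

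First I would observe that if $x \in J(\mathfrak{L}_1,\ldots,\mathfrak{L}_n)$, then by definition there exist lines $l_1 \in \mathfrak{L}_1, \ldots, l_n \in \mathfrak{L}_n$ through $x$ whose directions span $\mathbb{F}^n$. All of these lines belong to $\mathfrak{L}$, and they span $\mathbb{F}^n$ and pass through $x$; hence $x$ is a joint formed by $\mathfrak{L}$. Therefore $J \subseteq J(\mathfrak{L}_1,\ldots,\mathfrak{L}_n) \subseteq J(\mathfrak{L})$, so $J$ is a subset of joints of $\mathfrak{L}$ on which a non-zero polynomial of degree at most $d$ vanishes.

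Then I would apply Lemma \ref{quilodran} directly to $\mathfrak{L}$ and $J$, which yields $|J| \leq |\mathfrak{L}| \cdot d = |\mathfrak{L}_1 \cup \ldots \cup \mathfrak{L}_n| \cdot d$, as required. There is no real obstacle here: the content of the corollary is entirely contained in Lemma \ref{quilodran}, and the only observation needed is the trivial set-theoretic fact that a multijoint (with one spanning line from each collection) is automatically a joint in the union of the collections.
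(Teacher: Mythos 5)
Your reduction is correct and is exactly the implicit argument behind the paper's statement: the paper presents this as an immediate corollary of Lemma~\ref{quilodran}, relying precisely on the observation that every multijoint for $\mathfrak{L}_1,\ldots,\mathfrak{L}_n$ is a joint for $\mathfrak{L}_1\cup\ldots\cup\mathfrak{L}_n$.
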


In order to obtain an estimate on generic joints in $\R^n$, we will use the following variant of Lemma \ref{quilodran}, which again follows from Lemma \ref{firstpolyd}, and the idea behind which can be found in the proof of \cite[Proposition 4.1]{Carbery_Iliopoulou_14}.

\begin{lemma} \label{quilodranmult} Let $\mathfrak{L}$ be a finite collection of lines in $\mathbb{F}^n$, $n \geq 2$, where $\mathbb{F}$ is an arbitrary field. If $J$ is a subset of $J^k(\mathfrak{L})$, for $k \geq n$, such that there exists a non-zero polynomial in $\mathbb{F}[x_1,\ldots,x_n]$, of degree at most $d$, vanishing on $J$, then $$|J| k \leq n |\mathfrak{L}| d.$$
\end{lemma}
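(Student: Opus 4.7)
The plan is to mimic Quilodr\'an's iterative removal argument (Lemma \ref{quilodran}), but to count incidences rather than points, using the genericity hypothesis to ensure that a point of $J$ survives as a joint even after several lines through it are deleted. The constant $n$ on the right-hand side will appear in the very last step, as the ratio between $k$ and $k-n+1$ when $k\geq n$.

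More precisely, set $\mathfrak{L}_0 := \mathfrak{L}$ and $J_0:=J$, and let $p$ be the given non-zero polynomial of degree $\leq d$ vanishing on $J$. I would iterate as follows. At step $i\geq 1$, observe that $J_{i-1}$ is a set of joints formed by $\mathfrak{L}_{i-1}$: this is clear for $i=1$, and at later steps any point $x\in J_{i-1}$ has, by construction, at least $n$ lines of $\mathfrak{L}_{i-1}$ through it, and \emph{any} $n$ such lines span $\mathbb{F}^n$ by the genericity of $x \in J^k(\mathfrak{L})$. Applying Lemma \ref{firstpolyd} to $J_{i-1}$, $\mathfrak{L}_{i-1}$ and $p$, I obtain a line $l_i \in \mathfrak{L}_{i-1}$ with $|l_i\cap J_{i-1}|\leq d$. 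Set $\mathfrak{L}_i:=\mathfrak{L}_{i-1}\setminus\{l_i\}$ and
$$J_i:=\{x\in J_{i-1}:\text{ at least }n\text{ lines of }\mathfrak{L}_i\text{ pass through }x\}.$$
The process terminates at some step $t\leq|\mathfrak{L}|$ with $J_t=\emptyset$.

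Finally I would count incidences. For each $x\in J$, let $m(x)\in [k,2k)$ be the number of lines of $\mathfrak{L}$ through $x$. By the definition of $J_i$, the point $x$ is removed from the sequence only after exactly $m(x)-n+1\geq k-n+1$ of the lines through it have been selected as some $l_i$. Hence the total number of pairs $(x,l_i)$ with $x\in J_{i-1}\cap l_i$ is at least $|J|(k-n+1)$. On the other hand this total is at most $\sum_{i=1}^t |l_i\cap J_{i-1}|\leq t\, d\leq |\mathfrak{L}|\, d$. Combining gives $|J|(k-n+1)\leq |\mathfrak{L}|\, d$, and since $k\geq n$ implies $n(k-n+1)\geq k$ (equivalently, $(n-1)(k-n)\geq 0$), I conclude $|J|\, k\leq n|\mathfrak{L}|\, d$. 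The only subtle point is the verification that each $J_{i-1}$ is indeed a set of joints of $\mathfrak{L}_{i-1}$, which is precisely where the genericity of $J^k(\mathfrak{L})$ is used; everything else is a direct adaptation of the proof of Lemma \ref{quilodran}.
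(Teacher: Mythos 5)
Your proof is correct and follows essentially the same route as the paper's: the same iterative line removal driven by Lemma \ref{firstpolyd}, the same incidence count $\sum_i |J_{i-1}\cap l_i|$, bounded below by $|J|(k-n+1)$ because each generic point survives until $m(x)-n+1\geq k-n+1$ of its lines have been deleted, and bounded above by $|\mathfrak{L}|\,d$, with the final step using $n(k-n+1)\geq k$. The only cosmetic difference is that the paper phrases the update as deleting from $J_{i-1}$ those elements of $l_i\cap J_{i-1}$ that cease to be joints, whereas you define $J_i$ directly by the ``at least $n$ remaining lines'' condition; these are the same set.
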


\begin{proof} By Lemma \ref{firstpolyd}, there exists a line $l_1 \in \mathfrak{L}$ containing at most $d$ elements of $J$. We remove $l_1$ from our collection of lines. We also remove from our collection of joints the elements of $l_1 \cap J$ that are not joints for $\mathfrak{L}_1:=\mathfrak{L}\setminus \{l_1\}$ (note that, for $k \gneq n$, no elements of $J$ will be removed at this step, since the elements of $J$ are generic joints for $\mathfrak{L}$, and thus generic joints for $\mathfrak{L}_1$). The set $J_1$ of remaining elements of $J$ is a set of joints formed by the lines in $\mathfrak{L}_1$ (due to the genericity hypothesis). Again by Lemma \ref{firstpolyd}, there exists $l_2 \in \mathfrak{L}_1$, containing at most $d$ elements of $J_1$. We remove $l_2$ from our collection of lines; we also remove from our collection $J_1$ of joints the elements of $l_2\cap J_1$ that are not joints for $\mathfrak{L}_2:= \mathfrak{L}_1 \setminus \{l_2\}$. Let $J_2$ be the set of remaining elements of $J_1$.

We continue as above, until we have removed enough lines of $\mathfrak{L}$ so that no elements of $J$ are joints for the remaining set of lines. This is achieved in at most $|\mathfrak{L}|$ steps. Moreover, for each $x \in J$, there exist at least $k-(n-1)$ lines of $\mathfrak{L}$ passing through $x$ that are removed during the process, as otherwise after the final step of the process there would exist at least $n$ remaining lines of $\mathfrak{L}$ passing through $x$, and thus, due to the genericity hypothesis, $x$ would be a joint for the remaining collection of lines, which is a contradiction. Therefore, if, for each $i$, $\mathfrak{L}_i$ is the set of lines in $\mathfrak{L}$ remaining after the $i$-th step of the process, $J_i$ the set of joints in $J$ remaining after the $i$-th step of the process (where $\mathfrak{L}_0:=\mathfrak{L}$, $J_0:=J$), and $l_i$ is the element of $\mathfrak{L}_{i-1}\setminus \mathfrak{L}_i$, it holds that
$$|\mathfrak{L}|d\geq \sum_{i}|J_{i-1}\cap l_i|\geq \sum_{x \in J}(k-(n-1))=|J|(k-n+1).$$
Therefore, since $k-n+1 \geq k/n$, it follows that
$$|J|k\leq n|\mathfrak{L}|d.$$
\end{proof} 

It is clear by Corollary \ref{quilodransimple} and Lemma \ref{quilodranmult} that the existence of polynomials of low degree, vanishing on a large proportion of our sets of multijoints or generic joints, would imply multijoints and generic joints estimates. The problem therefore lies in finding such polynomials of sufficiently low degrees. As we have already mentioned, in the case of multijoints in $\mathbb{F}^3$ this will be achieved via a probabilistic polynomial degree reduction argument. In the argument, we will use the following lemma, which appears, for example, in \cite{Guth_Katz_2010} and \cite{Guth_14}.

\begin{lemma} \label{guth} Let $\mathfrak{L}$ be a finite collection of $L$ lines in $\mathbb{F}^n$, $n \geq 2$, where $\mathbb{F}$ is an arbitrary field. Then, there exists a non-zero polynomial in $\mathbb{F}[x_1,\ldots,x_n]$, of degree $\lesssim_n L^{\frac{1}{n-1}}$, vanishing on each line of $\mathfrak{L}$.

\end{lemma}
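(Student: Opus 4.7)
The plan is to prove this by a standard parameter-counting argument. The vector space $V_d:=\{p\in \mathbb{F}[x_1,\ldots,x_n]:\deg p\leq d\}$ is an $\mathbb{F}$-vector space of dimension $\binom{n+d}{n}$, which for $d\geq 1$ is at least $d^n/n!$. I will produce a non-zero element of $V_d$ vanishing identically on each line of $\mathfrak{L}$ by solving a homogeneous linear system with fewer equations than unknowns.

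Concretely, for each line $\ell_i\in\mathfrak{L}$ fix an affine parametrization $t\mapsto a_i+tv_i$ with $a_i,v_i\in\mathbb{F}^n$ and $v_i\neq 0$. For any $p\in V_d$, the restriction $p(a_i+tv_i)$ is an element of $\mathbb{F}[t]$ of degree at most $d$, hence determined by its $d+1$ coefficients, each of which depends $\mathbb{F}$-linearly on the coefficients of $p$. Thus $p$ vanishes identically on $\ell_i$ precisely when $d+1$ explicit linear forms in the coefficients of $p$ are zero. Imposing this over all $L$ lines yields a linear system of at most $L(d+1)$ equations in the $\binom{n+d}{n}$ unknown coefficients of $p$.

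If $\binom{n+d}{n}>L(d+1)$, this system has a non-trivial solution. Using $\binom{n+d}{n}\geq d^n/n!$, it suffices to take $d$ so that $d^n/n!>L(d+1)$, which is achieved by setting $d:=\lceil c_nL^{1/(n-1)}\rceil$ for a suitable constant $c_n$ depending only on $n$ (the case when $L$ is so small that this $d$ is less than $n$ is handled trivially by a constant-degree polynomial). There is essentially no obstacle: the argument is purely a dimension count in $\mathbb{F}$-vector spaces and is therefore insensitive to the characteristic or cardinality of $\mathbb{F}$. The only mild subtlety, important when $\mathbb{F}$ is finite, is that ``vanishing on $\ell_i$'' must be interpreted as ``vanishing identically as a polynomial when restricted to $\ell_i$'' rather than as ``vanishing at each $\mathbb{F}$-point of $\ell_i$''; this is precisely why the linear conditions are phrased through the coefficients of $p(a_i+tv_i)\in\mathbb{F}[t]$, which gives the uniform count of $d+1$ conditions per line regardless of $|\mathbb{F}|$.
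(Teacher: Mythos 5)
Your argument is correct and is exactly the standard parameter-counting proof from the cited sources (Guth--Katz, Guth); the paper itself states Lemma \ref{guth} without proof, referencing \cite{Guth_Katz_2010} and \cite{Guth_14}. You correctly identify the one point that needs care over general $\mathbb{F}$: encoding ``$p$ vanishes on $\ell_i$'' as the identical vanishing of the univariate polynomial $p(a_i+tv_i)\in\mathbb{F}[t]$, giving exactly $d+1$ linear conditions per line independently of $|\mathbb{F}|$, so that $\binom{n+d}{n}>L(d+1)$ yields a nontrivial solution and $d\sim_n L^{1/(n-1)}$ suffices.
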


In the case of multijoints or generic joints in $\R^n$, we will find polynomials of low degree vanishing on our sets of multijoints or generic joints using Guth--Katz partitioning:

\begin{theorem}\emph{\textbf{(Guth, Katz, \cite[Theorem 4.1]{Guth_Katz_2010})}}\label{2.1.3} Let $\mathfrak{G}$ be a finite set of $S$ points in $\R^n$, and $d>1$. Then, there exists a non-zero polynomial $p \in \R[x_1,...,x_n]$, of degree $\leq d$, and $\lesssim_n d^n$ pairwise disjoint open sets (cells) $C_1,\ldots,C_m$, each of which contains $\lesssim_n S/d^n$ points of $\mathfrak{G}$, such that $\R^n=C_1\sqcup\ldots\sqcup C_m\sqcup Z_p$, where $Z_p$ is the zero set of $p$.\end{theorem}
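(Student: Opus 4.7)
The plan is to prove this partitioning theorem via iterated polynomial ham-sandwich bisection.

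\textbf{Step 1 (polynomial ham-sandwich).} First I would invoke the polynomial ham-sandwich theorem: for any collection of $N$ finite point sets in $\R^n$, there exists a non-zero polynomial $q \in \R[x_1,\ldots,x_n]$ of degree $\lesssim_n N^{1/n}$ whose two open sign sets $\{q>0\}$ and $\{q<0\}$ each contain at most half the points of every one of the $N$ given sets. This is proved via the Borsuk--Ulam theorem applied to the projective space of polynomials of degree $\leq C N^{1/n}$ (whose real dimension grows like $N$), using the Veronese embedding to convert bisection of points in $\R^n$ by a polynomial into bisection by an affine hyperplane in a higher-dimensional real vector space.

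\textbf{Step 2 (iteration).} Next I would iterate. Set $\mathfrak{G}_0 := \{\mathfrak{G}\}$; at round $j \geq 1$, given a family $\mathfrak{G}_{j-1}$ of at most $2^{j-1}$ point sets each of size $\leq S/2^{j-1}$, apply ham-sandwich to produce a non-zero polynomial $q_j$ of degree $\lesssim_n (2^{j-1})^{1/n}$ that simultaneously bisects them all, and let $\mathfrak{G}_j$ be the collection of the $\leq 2^j$ resulting halves (any points landing on the zero set of $q_j$ are discarded). Stop at the largest $J$ for which the accumulated degree $\sum_{j=1}^J C_n (2^{j-1})^{1/n}$ does not exceed $d$; since this is a geometric series with ratio $2^{1/n} > 1$, it is comparable to its final term, and this choice forces $2^J \sim_n d^n$.

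\textbf{Step 3 (assembly).} Finally, I would set $p := q_1 q_2 \cdots q_J$, which has degree $\leq d$ by construction. The cells $C_1, \ldots, C_m$ are defined as the non-empty sign-pattern equivalence classes of $(q_1, \ldots, q_J)$ on $\R^n \setminus Z_p$: each cell is an intersection of open half-spaces of the form $\{q_j > 0\}$ or $\{q_j < 0\}$. These are pairwise disjoint open sets, they together cover $\R^n \setminus Z_p$, their number satisfies $m \leq 2^J \lesssim_n d^n$, and each contains at most $S/2^J \lesssim_n S/d^n$ points of $\mathfrak{G}$.

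The main obstacle is the ham-sandwich step itself, which is genuinely topological (Borsuk--Ulam on a cleverly chosen projective space); the rest is careful accounting. The key quantitative observation is that $2^{1/n} > 1$ makes the sum of the degrees a geometric series dominated by its final term, so the total degree at stage $J$ is $\lesssim_n (2^J)^{1/n}$, matching what a single ham-sandwich bisection of $2^J$ sets would achieve.
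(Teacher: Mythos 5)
The paper does not prove this theorem; it cites it directly from Guth and Katz \cite[Theorem 4.1]{Guth_Katz_2010}, so there is no internal proof to compare against. Your proposal reproduces the standard Guth--Katz argument via iterated polynomial ham-sandwich, and it is correct: you use the ham-sandwich polynomial of degree $\lesssim_n N^{1/n}$ to bisect $N$ sets, iterate $J$ rounds so that the accumulated degree $\sum_{j\leq J} C_n 2^{(j-1)/n}$ (a geometric series dominated by its last term, as you correctly note) stays below $d$, and take the cells to be the sign-pattern classes of $(q_1,\ldots,q_J)$ on $\R^n\setminus Z_p$, giving $2^J\sim_n d^n$ cells each with $\lesssim_n S/d^n$ points. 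Two small polish items: the sign regions $\{q_j>0\}$ are not ``half-spaces'' (they are open semialgebraic sets, half-spaces only when $\deg q_j=1$); and for very small $d$ (say $1<d<C_n$, so $J=0$) one should note that the trivial partition (one cell $=\R^n$, $p$ any nonzero constant) satisfies the conclusion since then $d^n\lesssim_n 1$. Neither affects the substance of the argument.
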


More precisely, we will show using Guth--Katz partitioning in $\R^n$ that, if our desired estimates on multijoints (or generic joints) hold in $\R^{n-1}$, then there exists a polynomial of sufficiently low degree, vanishing on a large proportion of our set of multijoints (or generic joints). The induction on $n$ closes because clearly our desired multijoints estimate holds in $\R^2$, while our generic joints estimate also holds in $\R^2$, due to the Szemer\'edi--Trotter theorem:

\begin{theorem} \emph{\textbf{(Szemer\'edi, Trotter)}} Let $\mathcal{P}$ be a finite set of points in $\R^2$ and $\mathfrak{L}$ a finite set of lines in $\R^2$. Then, if $I(\mathcal{P},\mathfrak{L})$ denotes the number of incidences between $\mathcal{P}$ and $\mathfrak{L}$, it holds that
$$I(\mathcal{P},\mathfrak{L})\lesssim |\mathcal{P}|^{2/3}|\mathfrak{L}|^{2/3}+|\mathfrak{L}|+|\mathcal{P}|.
$$
In particular, for any $k \geq 2$, if $\;\mathcal{P}^k$ denotes the set of points in $\mathcal{P}$ each lying in at least $k$ and fewer than $2k$ lines of $\mathfrak{L}$, then
$$|\mathcal{P}^k|\lesssim \frac{|\mathfrak{L}|^2}{k^3}+\frac{|\mathfrak{L}|}{k}.
$$

\end{theorem}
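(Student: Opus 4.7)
My plan is to prove the Szemer\'edi--Trotter theorem via Sz\'ekely's crossing-number argument, which is clean and works uniformly across regimes. Given the point set $\mathcal{P}$ and line set $\mathfrak{L}$, I would build a plane drawing of a graph $G$ as follows: take $V(G)=\mathcal{P}$, and for each line $\ell\in\mathfrak{L}$ order the points of $\mathcal{P}\cap\ell$ along $\ell$ and join each consecutive pair by a straight-line edge along $\ell$. Then $|V(G)|=|\mathcal{P}|$, the edge count satisfies $|E(G)|=\sum_\ell \max(|\mathcal{P}\cap\ell|-1,0)\ge I(\mathcal{P},\mathfrak{L})-|\mathfrak{L}|$, and each crossing of edges is a crossing of two distinct lines; since two lines cross in at most one point, $\mathrm{cr}(G)\le\binom{|\mathfrak{L}|}{2}\lesssim|\mathfrak{L}|^2$.

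Next I would invoke the crossing-number inequality: for any simple graph $H$ drawn in the plane with $|E(H)|\ge 4|V(H)|$ one has $\mathrm{cr}(H)\gtrsim |E(H)|^3/|V(H)|^2$. Applied to $G$: if $|E(G)|\ge 4|\mathcal{P}|$ then $|\mathfrak{L}|^2\gtrsim(I(\mathcal{P},\mathfrak{L})-|\mathfrak{L}|)^3/|\mathcal{P}|^2$, giving $I(\mathcal{P},\mathfrak{L})\lesssim|\mathcal{P}|^{2/3}|\mathfrak{L}|^{2/3}+|\mathfrak{L}|$; otherwise $I(\mathcal{P},\mathfrak{L})<|\mathfrak{L}|+4|\mathcal{P}|$. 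Combining the two cases yields
$$I(\mathcal{P},\mathfrak{L})\lesssim|\mathcal{P}|^{2/3}|\mathfrak{L}|^{2/3}+|\mathcal{P}|+|\mathfrak{L}|.$$

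For the second statement, each point of $\mathcal{P}^k$ contributes at least $k$ to $I(\mathcal{P}^k,\mathfrak{L})$, so by the bound just proved
$$k|\mathcal{P}^k|\le I(\mathcal{P}^k,\mathfrak{L})\lesssim |\mathcal{P}^k|^{2/3}|\mathfrak{L}|^{2/3}+|\mathcal{P}^k|+|\mathfrak{L}|.$$
The self-term $|\mathcal{P}^k|$ on the right cannot absorb $k|\mathcal{P}^k|$ for $k\ge 2$, so either $k|\mathcal{P}^k|\lesssim |\mathcal{P}^k|^{2/3}|\mathfrak{L}|^{2/3}$, giving $|\mathcal{P}^k|\lesssim|\mathfrak{L}|^2/k^3$, or $k|\mathcal{P}^k|\lesssim|\mathfrak{L}|$, giving $|\mathcal{P}^k|\lesssim|\mathfrak{L}|/k$; the sum of these two bounds is the claim.

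The genuinely non-trivial input I would take as a black box is the crossing-number inequality itself, provable via Euler's formula plus a probabilistic edge-deletion trick. An alternative in the spirit of the paper would be to apply the polynomial partitioning of Theorem \ref{2.1.3} to $\mathcal{P}$ with a suitable degree $d$, bound the lines inside $Z_p$ by $d$ (via unique factorisation in $\R[x,y]$, since each line is cut out by an irreducible linear polynomial) and those crossing cells by B\'ezout, and use K\H{o}v\'ari--S\'os--Tur\'an inside each cell; this mirrors the paper's techniques, but requires a careful reduction to the regime $|\mathcal{P}|^{1/2}\lesssim|\mathfrak{L}|\lesssim|\mathcal{P}|^2$ and balancing several error terms, which I expect to be the main obstacle if taking that route. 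The Sz\'ekely proof sidesteps this entirely.
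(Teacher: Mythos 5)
The paper does not prove this statement; it records the Szemer\'edi--Trotter theorem as a classical black box (attributed to the original paper of Szemer\'edi and Trotter) and uses only its consequence for $k$-rich points. So there is no in-paper argument to compare yours against. Your Sz\'ekely crossing-number proof is the standard modern proof and is essentially correct: the graph $G$ is simple, because two distinct lines of $\mathfrak{L}$ cannot both contain two common points of $\mathcal{P}$; the drawing you describe has at most $\binom{|\mathfrak{L}|}{2}$ crossings, since only edges on distinct lines can cross and they do so at most once; $|E(G)|\ge I(\mathcal{P},\mathfrak{L})-|\mathfrak{L}|$; and the dichotomy around $|E(G)|\ge 4|V(G)|$ yields $I(\mathcal{P},\mathfrak{L})\lesssim |\mathcal{P}|^{2/3}|\mathfrak{L}|^{2/3}+|\mathcal{P}|+|\mathfrak{L}|$ as claimed.

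There is one small imprecision in your derivation of the $k$-rich-point bound. From
$$k|\mathcal{P}^k|\le I(\mathcal{P}^k,\mathfrak{L})\le C\left(|\mathcal{P}^k|^{2/3}|\mathfrak{L}|^{2/3}+|\mathcal{P}^k|+|\mathfrak{L}|\right)$$
the term $C|\mathcal{P}^k|$ can be absorbed into the left-hand side only once $k\ge 2C$, and the implicit constant $C$ from the crossing-number inequality is certainly larger than $1$, so the assertion that ``$|\mathcal{P}^k|$ cannot absorb $k|\mathcal{P}^k|$ for $k\ge 2$'' is not literally true for $2\le k<2C$. This is easily patched: for these finitely many (bounded in terms of $C$) values of $k$, every point of $\mathcal{P}^k$ lies on at least two lines of $\mathfrak{L}$, and two distinct lines meet in at most one point, so $|\mathcal{P}^k|\le\binom{|\mathfrak{L}|}{2}\lesssim |\mathfrak{L}|^2\lesssim_C |\mathfrak{L}|^2/k^3$; for $k\ge 2C$ your absorption argument goes through and gives $|\mathcal{P}^k|\lesssim |\mathfrak{L}|^2/k^3+|\mathfrak{L}|/k$. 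With this small fix the proof is complete. Your remark that a polynomial-partitioning proof would be closer in spirit to the rest of the paper is fair, but unnecessary here, since the theorem is used as an external input rather than re-derived.
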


\section{Finding appropriate polynomials}

\subsection{\textbf{Multijoints in }$\boldsymbol{\mathbb{F}^3}$\textbf{.}} We wish to prove:\\ \\
\textbf{Theorem \ref{multijointsarbitrary}} \textit{Let $\mathfrak{L}_1$, $\mathfrak{L}_2$, $\mathfrak{L}_3$ be finite collections of $L_1$, $L_2$ and $L_3$, respectively, lines in $\mathbb{F}^3$, where $\mathbb{F}$ is an arbitrary field. Then,}
$$|J(\mathfrak{L}_1,\mathfrak{L}_2,\mathfrak{L}_3)| \leq c\; (L_1L_2L_3)^{1/2},$$
\textit{where $c$ is an absolute constant.} \\

Theorem \ref{multijointsarbitrary} will be proved combining Corollary \ref{quilodransimple} with Lemma \ref{inductivemultarbitrary} that follows, which ensures the existence of a polynomial of low degree vanishing on a large proportion of our set of multijoints, under the assumption that the set of multijoints is large. Lemma \ref{inductivemultarbitrary} will be proved via a probabilistic polynomial degree reduction argument.

\begin{lemma} \label{inductivemultarbitrary} Let $\mathfrak{L}_1$, $\mathfrak{L}_2$, $\mathfrak{L}_3$ be finite collections of $L_1$, $L_2$ and $L_3$, respectively, lines in $\mathbb{F}^3$, where $L_1$, $L_2 \leq L_3$ and $\mathbb{F}$ is an arbitrary field. Assume that Theorem \ref{multijointsarbitrary} holds for any collections $\mathfrak{L}_1'$, $\mathfrak{L}_2'$, $\mathfrak{L}_3'$ of lines in $\mathbb{F}^3$, such that either $|\mathfrak{L}_1'| \leq L_1$ and $|\mathfrak{L}_2'|\lneq L_2$, or $|\mathfrak{L}_1'| \lneq L_1$ and $|\mathfrak{L}_2'|\leq L_2$. Then, there exists a constant $C>1$,  independent of $\mathfrak{L}_1$, $\mathfrak{L}_2$, $\mathfrak{L}_3$, such that, if $|J(\mathfrak{L}_1,\mathfrak{L}_2,\mathfrak{L}_3)| \geq C\; (L_1L_2L_3)^{1/2}$, there exists a subset $J$ of $J(\mathfrak{L}_1,\mathfrak{L}_2,\mathfrak{L}_3)$, with 
$$|J| \gtrsim |J(\mathfrak{L}_1,\mathfrak{L}_2,\mathfrak{L}_3)|,$$
and a non-zero polynomial $p \in \mathbb{F}[x_1,x_2,x_3]$, with
$$\deg p \lesssim \frac{L_1L_2}{|J(\mathfrak{L}_1,\mathfrak{L}_2,\mathfrak{L}_3)|},$$
vanishing on $J$.

\end{lemma}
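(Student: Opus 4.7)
The plan is to construct the desired polynomial by randomly sampling a subcollection $\mathfrak{L}_2'\subset\mathfrak{L}_2$ at a carefully chosen rate, applying Lemma \ref{guth} to get a polynomial vanishing on the sampled lines, and invoking the inductive hypothesis to control the multijoints that the polynomial misses. Throughout, write $M:=|J(\mathfrak{L}_1,\mathfrak{L}_2,\mathfrak{L}_3)|$ and $D:=L_1L_2/M$. Since each pair $(l_1,l_2)\in\mathfrak{L}_1\times\mathfrak{L}_2$ meets in at most one point, the trivial bound $M\leq L_1L_2$ holds, so $D\geq 1$.

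I would set $q:=L_1^2L_2/M^2$, chosen precisely so that $qL_2=D^2$, making the degree $\lesssim(qL_2)^{1/2}=D$ guaranteed by Lemma \ref{guth} match the target. The hypothesis $M\geq C(L_1L_2L_3)^{1/2}$ combined with $L_1\leq L_3$ gives $M^2\geq C^2L_1^2L_2$, so $q\leq 1/C^2\leq 1$; meanwhile $qL_2=D^2\geq 1$. Sampling each line of $\mathfrak{L}_2$ independently with probability $q$, Markov's inequality yields $\Pr(|\mathfrak{L}_2'|>2qL_2)\leq 1/2$, while $\Pr(|\mathfrak{L}_2'|=0)\leq(1-q)^{L_2}\leq e^{-qL_2}\leq 1/e$, so with probability at least $1/2-1/e>0$ a realization with $1\leq|\mathfrak{L}_2'|\leq 2D^2$ exists. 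Fixing one such realization, Lemma \ref{guth} supplies a non-zero polynomial $p\in\mathbb{F}[x_1,x_2,x_3]$ of degree $\lesssim|\mathfrak{L}_2'|^{1/2}\lesssim D=L_1L_2/M$ vanishing on every line of $\mathfrak{L}_2'$.

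Define $J:=\{x\in J(\mathfrak{L}_1,\mathfrak{L}_2,\mathfrak{L}_3):p(x)=0\}$. I claim $J^c\subseteq J(\mathfrak{L}_1,\mathfrak{L}_2\setminus\mathfrak{L}_2',\mathfrak{L}_3)$: if $p(x)\neq 0$, then $x$ lies on no line of $\mathfrak{L}_2'$, so the $\mathfrak{L}_2$-line of any spanning triple through $x$ must belong to $\mathfrak{L}_2\setminus\mathfrak{L}_2'$. Since $|\mathfrak{L}_2'|\geq 1$, we have $|\mathfrak{L}_2\setminus\mathfrak{L}_2'|<L_2$, so the inductive hypothesis applies and gives
$$|J^c|\leq c\bigl(L_1(L_2-|\mathfrak{L}_2'|)L_3\bigr)^{1/2}\leq c(L_1L_2L_3)^{1/2}\leq cM/C.$$
Choosing $C\geq 4c$ yields $|J^c|\leq M/4$, whence $|J|\geq 3M/4\gtrsim M$, with $\deg p\lesssim L_1L_2/M$, as required.

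The main obstacle—really the only non-routine step—is identifying the sampling probability $q=L_1^2L_2/M^2$: it is the unique choice that makes the degree $(qL_2)^{1/2}$ supplied by Lemma \ref{guth} coincide with the target $L_1L_2/M$, and it is the trivial bound $M\leq L_1L_2$ that simultaneously forces $q\leq 1$ and $qL_2\geq 1$. Once $q$ is chosen correctly, the two competing constraints ($q$ small enough to keep the polynomial degree at $\sim D$, $q$ large enough that the complement $J^c$ is forced through induction to be a small fraction of $M$) are compatible precisely thanks to the hypothesis $M\geq C(L_1L_2L_3)^{1/2}$.
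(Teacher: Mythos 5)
Your approach is genuinely different from the paper's, and structurally much simpler: you remove a small subcollection $\mathfrak{L}_2'\subset\mathfrak{L}_2$, take a polynomial vanishing on the removed lines via Lemma \ref{guth}, and observe that any multijoint off the zero set is a multijoint for $\mathfrak{L}_1,\mathfrak{L}_2\setminus\mathfrak{L}_2',\mathfrak{L}_3$, so the inductive hypothesis bounds the escaped multijoints. (The random sampling is in fact unnecessary -- any deterministic choice of $\mathfrak{L}_2'$ with $1\leq|\mathfrak{L}_2'|\leq 2D^2$ would serve.) The paper instead does a probabilistic degree reduction on $\mathfrak{L}_1$: it identifies the $\mathfrak{L}_2$-lines ``chosen'' by many multijoints, shows via the inductive hypothesis (applied to that rich subcollection of variable size) that there are $\gtrsim L_2$ of them, and then samples $\widetilde{\mathfrak{L}_1}\subset\mathfrak{L}_1$ so that each rich line meets $\widetilde{\mathfrak{L}_1}$ in enough points that the low-degree polynomial vanishing on $\widetilde{\mathfrak{L}_1}$ is forced to vanish on the rich lines as well.

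There is, however, a genuine gap in your version, and it lies exactly in the threshold constant $C$. You take $C\geq 4c$, where $c$ is the constant in Theorem \ref{multijointsarbitrary} that is assumed in the inductive hypothesis, because your bound $|J^c|\leq c(L_1L_2L_3)^{1/2}$ is essentially as large as the very estimate being proved, and forcing $|J^c|\leq M/4$ therefore requires $M\geq 4c(L_1L_2L_3)^{1/2}$. This makes the lemma too weak to close the induction in the proof of Theorem \ref{multijointsarbitrary}: in the case $|J(\mathfrak{L}_1,\mathfrak{L}_2,\mathfrak{L}_3)|<C(L_1L_2L_3)^{1/2}$ one needs this to already give the desired bound $|J(\mathfrak{L}_1,\mathfrak{L}_2,\mathfrak{L}_3)|\leq c(L_1L_2L_3)^{1/2}$, which requires $C\leq c$; your $C\geq 4c$ is incompatible with that, and no choice of $c$ resolves $c\geq C\geq 4c$. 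The root cause is that you apply the inductive hypothesis with the fixed count $L_2$ (or $L_2-|\mathfrak{L}_2'|\approx L_2$), so you cannot squeeze anything smaller than $c(L_1L_2L_3)^{1/2}$ out of it. The paper instead applies the inductive hypothesis to $J(\mathfrak{L}_1,\mathfrak{L}_2',\mathfrak{L}_3)$ where $|\mathfrak{L}_2'|$ is \emph{free}; comparing $c(L_1|\mathfrak{L}_2'|L_3)^{1/2}$ with the assumed lower bound $C(L_1L_2L_3)^{1/2}$ cancels the $L_1L_3$ factors and turns the inductive hypothesis into a \emph{lower} bound $|\mathfrak{L}_2'|\gtrsim L_2/c^2$ (using only $C>1$), so the threshold $C$ can be fixed as an absolute constant coming from the probabilistic step alone, with the $c$-dependence absorbed into the implicit constant of $|J|\gtrsim|J(\mathfrak{L}_1,\mathfrak{L}_2,\mathfrak{L}_3)|$. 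Your proof therefore establishes a strictly weaker version of the lemma and does not feed into the proof of Theorem \ref{multijointsarbitrary}.
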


\begin{proof} Let us assume that $|J(\mathfrak{L}_1,\mathfrak{L}_2,\mathfrak{L}_3)| \geq C\; (L_1L_2L_3)^{1/2}$, for some constant $C>1$. We will show that, if $C$ is sufficiently large (and independent of $\mathfrak{L}_1$, $\mathfrak{L}_2$, $\mathfrak{L}_3$), there exists $J\subset J(\mathfrak{L}_1,\mathfrak{L}_2,\mathfrak{L}_3)$ with the desired properties.

For each $x \in J(\mathfrak{L}_1,\mathfrak{L}_2,\mathfrak{L}_3)$, we fix a line $l_2(x)\in \mathfrak{L}_2$ passing through $x$, with the property that there exist lines $l_1\in \mathfrak{L}_1$ and $l_3\in \mathfrak{L}_3$ through $x$, such that the directions of $l_1$, $l_2(x)$ and $l_3$ span $\R^3$. We say that $x$ ``chooses" $l_2(x)$; note that each $x \in J(\mathfrak{L}_1,\mathfrak{L}_2,\mathfrak{L}_3)$ chooses only one line.

Let $\mathfrak{L}_2'$ be the set of lines in $\mathfrak{L}_2$, each of which is chosen as $l_2(x)$ by at least $\frac{1}{100}\frac{|J(\mathfrak{L}_1,\mathfrak{L}_2,\mathfrak{L}_3)|}{L_2}$ multijoints $x \in J(\mathfrak{L}_1,\mathfrak{L}_2,\mathfrak{L}_3)$. 

Due to our hypothesis that our set of multijoints is large, we have that $|\mathfrak{L}_2'|\gtrsim L_2$. Indeed, let $\mathfrak{G}$ be the set of points $x \in J(\mathfrak{L}_1,\mathfrak{L}_2,\mathfrak{L}_3)$, each of which chooses as $l_2(x)$ a line in $\mathfrak{L}_2'$. By the definition of $\mathfrak{L}_2'$, there exist fewer than $\frac{|J(\mathfrak{L}_1,\mathfrak{L}_2,\mathfrak{L}_3)|}{100 \; L_2}\;|\mathfrak{L}_2\setminus \mathfrak{L}_2'|\leq \frac{1}{100}|J(\mathfrak{L}_1,\mathfrak{L}_2,\mathfrak{L}_3)|$ points $x \in J(\mathfrak{L}_1,\mathfrak{L}_2,\mathfrak{L}_3)$ choosing a line in $\mathfrak{L}_2\setminus\mathfrak{L}_2'$ as $l_2(x)$ . Thus, $|\mathfrak{G}|\gtrsim |J(\mathfrak{L}_1,\mathfrak{L}_2,\mathfrak{L}_3)|$. Now, each element of $\mathfrak{G}$ is a multijoint for the collections $\mathfrak{L}_1$, $\mathfrak{L}_2'$ and $\mathfrak{L}_3$ of lines. Therefore, if $|\mathfrak{L}_2'| \lneq L_2$, it follows, by our hypotheses, that Theorem \ref{multijointsarbitrary} holds for multijoints formed by $\mathfrak{L}_1$, $\mathfrak{L}_2'$ and $\mathfrak{L}_3$, and thus
$$|J(\mathfrak{L}_1,\mathfrak{L}_2,\mathfrak{L}_3)| \leq  \tilde{c}\; (L_1|\mathfrak{L}_2'|L_3)^{1/2},
$$
for some constant $\tilde{c}$ depending only on the constant $c$ appearing in the statement of Theorem \ref{multijointsarbitrary}. This implies that $|\mathfrak{L}_2'| > \frac{L_2}{4\tilde{c}^2}$, because if it was true that $|\mathfrak{L}_2'| \leq \frac{L_2}{4\tilde{c}^2}$, it would follow that
$$|J(\mathfrak{L}_1,\mathfrak{L}_2,\mathfrak{L}_3)| \leq  \frac{1}{2} (L_1L_2L_3)^{1/2},
$$
which contradicts our hypothesis that our set of multijoints is large. Therefore, it indeed holds that $|\mathfrak{L}_2'| \gtrsim L_2$.

We will now first find, via a probabilistic argument, a small subset $\widetilde{\mathfrak{L}_1}$ of $\mathfrak{L}_1$, with the property that each line in a large proportion $\mathfrak{L}_2''$ of $\mathfrak{L}_2'$ intersects a lot of lines of $\widetilde{\mathfrak{L}_1}$. Then, we will find a polynomial of low degree, vanishing on each line of $\widetilde{\mathfrak{L}_1}$. Since each line in $\mathfrak{L}_2''$ intersects a lot of lines of $\widetilde{\mathfrak{L}_1}$, it will contain a lot of points where the polynomial vanishes, and thus the polynomial will vanish on each line in $\mathfrak{L}_2''$. Then, using the fact that $\mathfrak{L}_2'$ is large, it will follow that $\mathfrak{L}_2''$ is large, therefore the polynomial will vanish on a large proportion of our set of multijoints.

Let us now go into the details. We create a random subset $\widetilde{\mathfrak{L}_1}$ of $\mathfrak{L}_1$, by choosing each line of $\mathfrak{L}_1$ with probability $\frac{d^2}{L_1}$, where $d =C \frac{L_1L_2}{|J(\mathfrak{L}_1,\mathfrak{L}_2,\mathfrak{L}_3)|}$. Note that $d>1$ (since $C>1$ and clearly $|J(\mathfrak{L}_1,\mathfrak{L}_2,\mathfrak{L}_3)|\leq L_1L_2$). Also, since $C$ is such that $|J(\mathfrak{L}_1,\mathfrak{L}_2,\mathfrak{L}_3)|\geq C (L_1L_2L_3)^{1/2}$, we have that $\frac{d^2}{L_1} \leq 1$. Now, (i) and (ii) below hold (because the random variables in question follow explicit binomial distributions):

(i) With probability $\gtrsim 1$, $|\widetilde{\mathfrak{L}_1}| \lesssim d^2$.

(ii) With probability $\gtrsim 1$, there exist $\gtrsim |\mathfrak{L}_2'|$ lines of $\mathfrak{L}_2'$, each containing $\geq \frac{C}{10^{12}}d$ points belonging to lines of $\widetilde{\mathfrak{L}_1}$. Indeed, each $x \in J(\mathfrak{L}_1,\mathfrak{L}_2,\mathfrak{L}_3)$ belongs to a line of a random set $\widetilde{\mathfrak{L}_1}$ as above with probability $\geq \frac{d^2}{L_1}$, since there exists a line of $\mathfrak{L}_1$ through $x$. Therefore, for each $l_2 \in \mathfrak{L}_2'$, the expected number of points of $J(\mathfrak{L}_1,\mathfrak{L}_2,\mathfrak{L}_3)$ on $l_2$ that belong to lines of $\widetilde{\mathfrak{L}_1}$ is $\geq \frac{d^2}{L_1} \cdot \frac{1}{100}\frac{|J(\mathfrak{L}_1,\mathfrak{L}_2,\mathfrak{L}_3)|}{L_2}$; therefore, with probability $\gtrsim 1$, at least $ \frac{1}{10^{10}}\frac{d^2}{L_1} \cdot \frac{1}{100}\frac{|J(\mathfrak{L}_1,\mathfrak{L}_2,\mathfrak{L}_3)|}{L_2}=\frac{C}{10^{12}}d$ points of $J(\mathfrak{L}_1,\mathfrak{L}_2,\mathfrak{L}_3)$ on $l_2$ belong to lines of $\widetilde{\mathfrak{L}_1}$. So, with probability $\gtrsim 1$, there exist $\gtrsim |\mathfrak{L}_2'|$ lines of $\mathfrak{L}_2'$, each containing $\geq \frac{C}{10^{12}}d$ points of $J(\mathfrak{L}_1,\mathfrak{L}_2,\mathfrak{L}_3)$ belonging to lines of $\widetilde{\mathfrak{L}_1}$.

Note that, in (i), the probability that $|\widetilde{\mathfrak{L}_1}| \lesssim d^2$ can be as close to 1 as we wish (and independent of $\mathfrak{L}_1$, $\mathfrak{L}_2$, $\mathfrak{L}_3$), if the implicit constant in the inequality $|\widetilde{\mathfrak{L}_1}| \lesssim d^2$ is accordingly large. Therefore, it follows by (i) and (ii) that there exists a subset $\widetilde{\mathfrak{L}_1}$ of $\mathfrak{L}_1$, with $|\widetilde{\mathfrak{L}_1}| \leq C'd^2$ (for some absolute constant $C'$), and a subset $\mathfrak{L}_2''$ of $\mathfrak{L}_2'$, with $|\mathfrak{L}_2''| \gtrsim |\mathfrak{L}_2'|$, such that each line of $\mathfrak{L}_2''$ contains $\geq \frac{C}{10^{12}}d$ points of $J(\mathfrak{L}_1,\mathfrak{L}_2,\mathfrak{L}_3)$ belonging to lines of $\widetilde{\mathfrak{L}_1}$.

By Lemma \ref{guth}, there exists a non-zero polynomial $p \in \mathbb{F}[x_1,x_2,x_3]$, of degree $\leq K |\widetilde{\mathfrak{L}_1}|^{1/2} \leq K(C'd^2)^{1/2}=KC'^{1/2}d$ (where $K$ is an absolute constant), vanishing on each line of $\widetilde{\mathfrak{L}_1}$. In particular, if $\frac{C}{10^{12}}d \gneq KC'^{1/2}d$ (i.e. $C\gneq 10^{12} KC'^{1/2}$), each line of $\mathfrak{L}_2''$ contains $\gneq KC'^{1/2}d\geq \deg p$ points of the zero set of $p$. So, fixing $C$ to be any absolute constant $\gneq 10^{12} KC'^{1/2}$, we have that $p$ vanishes on each line of $\mathfrak{L}_2''$. However, $\mathfrak{L}_2''\subset \mathfrak{L}_2'$; thus, each line in $\mathfrak{L}_2''$ is chosen as $l_2(x)$ by $\gtrsim \frac{|J(\mathfrak{L}_1,\mathfrak{L}_2,\mathfrak{L}_3)|}{L_2}$ points $x \in J(\mathfrak{L}_1,\mathfrak{L}_2,\mathfrak{L}_3)$ (each of which chooses as $l_2(x)$ only one line in $\mathfrak{L}_2''$). Therefore, $p$ vanishes on $\gtrsim \frac{|J(\mathfrak{L}_1,\mathfrak{L}_2,\mathfrak{L}_3)|}{L_2}|\mathfrak{L}_2''| \sim \frac{|J(\mathfrak{L}_1,\mathfrak{L}_2,\mathfrak{L}_3)|}{L_2}|\mathfrak{L}_2'|$ points of $J(\mathfrak{L}_1,\mathfrak{L}_2,\mathfrak{L}_3)$. Since $|\mathfrak{L}_2'|\gtrsim L_2$, the proof of the lemma is complete. 

\end{proof}

\textit{Proof of Theorem \ref{multijointsarbitrary}.} Theorem \ref{multijointsarbitrary} will be proved by induction on the cardinalities of $\mathfrak{L}_1$ and $\mathfrak{L}_2$, combining Lemma \ref{inductivemultarbitrary} and Corollary \ref{quilodransimple}. 

More particularly, Theorem \ref{multijointsarbitrary} holds for any collections $\mathfrak{L}_1$, $\mathfrak{L}_2$, $\mathfrak{L}_3$ of lines in $\mathbb{F}^3$ with $|\mathfrak{L}_1|=|\mathfrak{L}_2|=1$, for any constant $c \geq 1$, as $\mathfrak{L}_1$, $\mathfrak{L}_2$ and $\mathfrak{L}_3$ form at most one joint in this case.

Let us assume that, for some $(M_1,M_2) \in (\N^*)^2$ (where $\N^*=\N\setminus \{0\}$), Theorem \ref{multijointsarbitrary} holds, for some constant $c \geq 1$ which will be specified later, for any collections $\mathfrak{L}_1$, $\mathfrak{L}_2$, $\mathfrak{L}_3$ of lines in $\mathbb{F}^3$ with $|\mathfrak{L}_1|, |\mathfrak{L}_2| \leq |\mathfrak{L}_3|$, such that either $|\mathfrak{L}_1| \leq M_1$ and $|\mathfrak{L}_2|\lneq M_2$, or $|\mathfrak{L}_1|\lneq M_1$ and $|\mathfrak{L}_2| \leq M_2$.

We will show that Theorem \ref{multijointsarbitrary} then holds for any collections $\mathfrak{L}_1$, $\mathfrak{L}_2$, $\mathfrak{L}_3$ of lines in $\mathbb{F}^3$ with $|\mathfrak{L}_1|, |\mathfrak{L}_2| \leq |\mathfrak{L}_3|$, $|\mathfrak{L}_1|= M_1$ and $|\mathfrak{L}_2| = M_2$, for the same constant $c$ as in the induction hypothesis. 

Indeed, let $\mathfrak{L}_1$, $\mathfrak{L}_2$, $\mathfrak{L}_3$ be collections of $L_1$, $L_2$ and $L_3$, respectively, lines in $\mathbb{F}^3$, with $L_1,L_2 \leq L_3$, $L_1=M_1$ and $L_2=M_2$. Let us assume that $|J(\mathfrak{L}_1,\mathfrak{L}_2,\mathfrak{L}_3)|$ $\geq C (L_1L_2L_3)^{1/2}$, where $C$ is the constant appearing in the statement of Lemma \ref{inductivemultarbitrary}. By Lemma \ref{inductivemultarbitrary}, there exists a subset $J$ of $J(\mathfrak{L}_1,\mathfrak{L}_2,\mathfrak{L}_3)$, with 
$$|J| \gtrsim |J(\mathfrak{L}_1,\mathfrak{L}_2,\mathfrak{L}_3)|,$$
and a non-zero polynomial $p \in \mathbb{F}[x_1,x_2,x_3]$, with
$$\deg p \lesssim \frac{L_1L_2}{|J(\mathfrak{L}_1,\mathfrak{L}_2,\mathfrak{L}_3)|},$$
vanishing on $J$. Therefore, Corollary \ref{quilodransimple} implies that
$$|J| \leq |\mathfrak{L}_1\cup \mathfrak{L}_2\cup \mathfrak{L}_3|\cdot\deg p\lesssim L_3\cdot \frac{L_1L_2}{|J(\mathfrak{L}_1,\mathfrak{L}_2,\mathfrak{L}_3)|},
$$
which gives
$$|J(\mathfrak{L}_1,\mathfrak{L}_2,\mathfrak{L}_3)| \leq C' (L_1L_2L_3)^{1/2}
$$
after rearranging, for some absolute constant $C'$. By fixing the constant $c$ in the inductive hypothesis to be equal to $\max\{C,C'\}$, we have that, for the same constant $c$,
$$|J(\mathfrak{L}_1,\mathfrak{L}_2,\mathfrak{L}_3)| \leq c\; (L_1L_2L_3)^{1/2}.
$$
\qed

\subsection{\textbf{Multijoints in }$\boldsymbol{\R^n}$\textbf{.}} We wish to prove:\\ \\
\textbf{Theorem \ref{theoremmult2}} \textit{Let $n \geq 2$. Let $\mathfrak{L}_1,\dots, \mathfrak{L}_n$ be finite collections of $L_1, \dots, L_n$, respectively, lines in $\R^n$. Then,}
\begin{equation} \label{eq:proved}|J(\mathfrak{L}_1,\dots, \mathfrak{L}_n)|\leq c_n (L_1\cdots L_n)^{1/(n-1)}, \end{equation}
\textit{where $c_n$ is a constant depending only on $n$.}\\

This will be achieved by combining Corollary \ref{quilodransimple} with Lemma \ref{inductive} that follows, which ensures the existence of a polynomial of low degree vanishing on a large proportion of our set of multijoints in $\R^n$, for $n \geq 3$, under the assumption that Theorem \ref{theoremmult2} holds in $\R^{n-1}$. In particular, given that Theorem \ref{theoremmult2} obviously holds in $\R^2$, the proof of Theorem \ref{theoremmult2} will be completed by induction on $n$. As we have already mentioned, Lemma \ref{inductive} will be proved using Guth--Katz partitioning.

\begin{lemma} \label{inductive} Let $n \geq 3$. Assume that Theorem \ref{theoremmult2} holds in $\R^{n-1}$. Then, for any $\mathfrak{L}_1, \dots \mathfrak{L}_n$ finite collections of $L_1, \ldots, L_n$, respectively, lines in $\R^n$, with $L_1 \leq L_2 \leq \ldots \leq L_n$, there exists a subset $J$ of $J(\mathfrak{L}_1, \ldots ,\mathfrak{L}_n)$, with $$|J| \gtrsim |J(\mathfrak{L}_1, \ldots ,\mathfrak{L}_n)|,$$ and a non-zero polynomial $p \in \R[x_1, \dots, x_n]$, with $$\deg p \lesssim_n \frac{L_1 \cdots L_{n-1}}{|J(\mathfrak{L}_1, \ldots ,\mathfrak{L}_n)|^{n-2}},$$ vanishing on $J$.

\end{lemma}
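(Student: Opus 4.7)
The plan is to apply Guth--Katz polynomial partitioning to $J := J(\mathfrak{L}_1,\ldots,\mathfrak{L}_n)$ with a carefully chosen degree $d$ and then use the inductive hypothesis in $\R^{n-1}$ to force most multijoints to lie on the zero set of the resulting polynomial. Two preliminary observations will be used throughout. First, a generic linear surjection $\pi:\R^n\to\R^{n-1}$ is injective on any prescribed finite point set and sends $n$ linearly independent vectors of $\R^n$ to $n$ vectors of $\R^{n-1}$ any $n-1$ of which span $\R^{n-1}$; consequently $\pi$ sends each multijoint of $\mathfrak{L}_1,\ldots,\mathfrak{L}_n$ in $\R^n$ to a multijoint of $\pi(\mathfrak{L}_1),\ldots,\pi(\mathfrak{L}_{n-1})$ in $\R^{n-1}$, and the inductive hypothesis applied to these projected collections yields the projection bound
\begin{equation*}
M := |J| \lesssim_n (L_1\cdots L_{n-1})^{1/(n-2)}.
\end{equation*}
Second, a line not contained in the zero set $Z_p$ of a polynomial $p$ meets $Z_p$ in at most $\deg p$ points and so enters at most $\deg p + 1$ cells of any Guth--Katz partition associated to $p$.

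I will fix a small constant $\eta = \eta(n) > 0$ to be chosen shortly and set $d := \eta\cdot M/(L_1\cdots L_{n-1})^{1/(n-2)}$. In the main regime $d>1$, Theorem \ref{2.1.3} produces a non-zero polynomial $p\in\R[x_1,\ldots,x_n]$ of degree at most $d$ together with open cells $C_1,\ldots,C_m$ covering $\R^n\setminus Z_p$, each containing $\lesssim_n M/d^n$ multijoints. Write $J_Z := J\cap Z_p$ and $J_\alpha := J\cap C_\alpha$. Either $|J_Z|\geq M/2$ (Case A) or $\sum_\alpha|J_\alpha|\geq M/2$ (Case B). In Case A I take $J := J_Z$; then $|J|\gtrsim M$ and $\deg p \leq d$, and the projection bound, rearranged as $d \lesssim_n L_1\cdots L_{n-1}/M^{n-2}$, shows this choice of $d$ lies within the allowed range.

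The bulk of the work is to rule out Case B. For each cell $C_\alpha$ and index $i$, let $L_i^\alpha$ denote the number of lines of $\mathfrak{L}_i$ meeting $C_\alpha$ but not contained in $Z_p$ (lines in $Z_p$ contribute no multijoints to open cells); the second preliminary observation yields $\sum_\alpha L_i^\alpha \leq L_i(d+1)\lesssim L_i d$. Applying the projection argument inside each cell gives, via the inductive hypothesis,
\begin{equation*}
|J_\alpha|\leq c_{n-1}(L_1^\alpha\cdots L_{n-1}^\alpha)^{1/(n-2)},
\end{equation*}
and H\"older's inequality with each of the $n-1$ conjugate exponents equal to $n-1$, combined with the pointwise bound $(L_i^\alpha)^{(n-1)/(n-2)}\leq L_i^{1/(n-2)}\,L_i^\alpha$ and the line--cell estimate, yields
\begin{equation*}
\sum_\alpha |J_\alpha| \leq c_{n-1}\prod_{i=1}^{n-1}\Bigl(\sum_\alpha (L_i^\alpha)^{(n-1)/(n-2)}\Bigr)^{1/(n-1)} \lesssim_n (L_1\cdots L_{n-1})^{1/(n-2)}\, d \lesssim_n \eta\, M.
\end{equation*}
Choosing $\eta$ small enough makes the right-hand side strictly less than $M/2$, contradicting Case B, and forcing Case A.

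The hard part is arranging a single choice of $d$ that simultaneously makes H\"older applied cell-by-cell rule out Case B and meets the required upper bound $d\lesssim_n L_1\cdots L_{n-1}/M^{n-2}$; the projection bound is exactly what reconciles the two. The edge case $d\leq 1$, in which $M$ is at most a constant multiple of $(L_1\cdots L_{n-1})^{1/(n-2)}$, requires a separate direct construction of a low-degree polynomial vanishing on $J$ (for instance by applying Lemma \ref{guth} to a suitably chosen subcollection), whose degree is again certified by the projection bound.
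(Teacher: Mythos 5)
Your overall plan — Guth--Katz partitioning with degree $d$, project to a generic hyperplane, use the $\R^{n-1}$ bound, conclude most multijoints lie on $Z_p$ — is the right framework, and it matches the paper's. But there is a genuine gap in how you rule out Case B, and it propagates into the choice of $d$ in a way that makes the argument vacuous.

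The problem is that summing the cell-wise inductive bound via H\"older is lossy. Under the only constraints you invoke on the numbers $L_i^\alpha$ (namely $L_i^\alpha\le L_i$ and $\sum_\alpha L_i^\alpha\lesssim L_i d$), the quantity $\sum_\alpha(L_i^\alpha)^{(n-1)/(n-2)}$ is genuinely as large as $\sim d\,L_i^{(n-1)/(n-2)}$ (take $\sim d$ cells each meeting all $L_i$ lines), so your chain of inequalities gives exactly $\sum_\alpha|J_\alpha|\lesssim_n c_{n-1}(L_1\cdots L_{n-1})^{1/(n-2)}\,d$ and no better. For this to force $\sum_\alpha|J_\alpha|<M/2$ you must choose $\eta$ small compared with $c_{n-1}$. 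But the projection bound $M\le c_{n-1}(L_1\cdots L_{n-1})^{1/(n-2)}$ then gives $d=\eta M/(L_1\cdots L_{n-1})^{1/(n-2)}\le \eta c_{n-1}<1$ for every admissible input, so Theorem \ref{2.1.3} (which requires $d>1$) is never applicable. In other words, your ``main regime $d>1$'' is empty and your ``edge case $d\le 1$'' is the whole proof — and the edge case is not actually handled: Lemma \ref{guth} gives a polynomial of degree $\lesssim L_1^{1/(n-1)}$, which is not $\lesssim_n L_1\cdots L_{n-1}/M^{n-2}$ when $M$ is comparable to $(L_1\cdots L_{n-1})^{1/(n-2)}$ (then the target degree is $O_n(1)$ but $L_1^{1/(n-1)}$ is unbounded).

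The sign of the trouble is visible already in the formula for $d$: you set $d\sim M/(L_1\cdots L_{n-1})^{1/(n-2)}$, which the projection bound forces to be $\lesssim_n 1$, whereas the paper sets $d\sim L_1\cdots L_{n-1}/M^{n-2}$, which the projection bound forces to be $\gtrsim_n 1$, exactly as needed for partitioning. With the paper's $d$, one cannot sum over all cells; instead, one observes that there are $\gtrsim_n d^n$ ``full'' cells each with $\gtrsim_n M/d^n$ multijoints, and then, for each $i$ separately, a positive proportion of those full cells meet $\lesssim_n L_i/d^{n-1}$ lines of $\mathfrak{L}_i$ (because otherwise the total number of line--$Z_p$ incidences would exceed $L_i d$). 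Choosing constants so that these $n$ properties overlap gives a \emph{single} full cell with $L_i^\alpha\lesssim_n L_i/d^{n-1}$ for every $i$; applying the $\R^{n-1}$ bound to that one cell gives $M/d^n\lesssim_n \bigl(L_1/d^{n-1}\cdots L_{n-1}/d^{n-1}\bigr)^{1/(n-2)}$, which rearranges to $d\lesssim_n L_1\cdots L_{n-1}/M^{n-2}$ and contradicts the choice of $d$ for $A_n$ large. This single-good-cell step is the missing idea; replacing the global H\"older sum with it makes the rest of your outline go through.
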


\begin{proof} We observe that, if we project $\R^n$ on a generic hyperplane, all the elements of the projection of $J(\mathfrak{L}_1, \ldots \mathfrak{L}_n)$ are multijoints in the hyperplane (i.e., in $\R^{n-1}$), formed by the collections $\mathfrak{L}_1', \ldots, \mathfrak{L}_{n-1}'$ of lines, where $\mathfrak{L}_i'$ is the set of projected lines of $\mathfrak{L}_i$ from $\R^n$ to the hyperplane, for all $i=1, \ldots, n-1$. Therefore, by our assumption that Theorem \ref{theoremmult2} holds in $\R^{n-1}$, we have that $$|J(\mathfrak{L}_1, \ldots, \mathfrak{L}_n)| \lesssim_n (L_1\cdots L_{n-1})^{1/(n-2)},$$and thus, for some large constant $A_n$ which will be fixed later, the quantity $$d=A_n\cdot\frac{L_1\cdots L_{n-1}}{|J(\mathfrak{L}_1, \ldots ,\mathfrak{L}_n)|^{n-2}} $$
is larger than 1. We can thus apply Guth--Katz partitioning (Theorem \ref{2.1.3}) in $\R^n$, for the set $J(\mathfrak{L}_1, \ldots ,\mathfrak{L}_n)$ and this $d$ as the upper bound for the degree of the partitioning polynomial. In particular, there exists a non-zero polynomial $p \in \R[x_1,\ldots, x_n]$, of degree at most $d$, whose zero set $Z$ decomposes $\R^n$ in $\lesssim_n d^n$ cells, each containing $\lesssim_n |J(\mathfrak{L}_1, \ldots ,\mathfrak{L}_n)|/d^n$ elements of $J(\mathfrak{L}_1, \ldots, \mathfrak{L}_n)$.

We now prove that, for $A_n$ sufficiently large, there exist $\gtrsim |J(\mathfrak{L}_1, \ldots ,\mathfrak{L}_n)|$ elements of $J(\mathfrak{L}_1, \ldots ,\mathfrak{L}_n)$ on the zero set of $p$. 

Indeed, assume for contradiction that there exist $\gtrsim |J(\mathfrak{L}_1, \ldots ,\mathfrak{L}_n)|$ elements of $J(\mathfrak{L}_1, \ldots \mathfrak{L}_n)$ in the union of the cells. Then, there exist $\gtrsim_n d^n$ cells, each containing $\gtrsim_n |J(\mathfrak{L}_1, \ldots \mathfrak{L}_n)|/d^n$ elements of $J(\mathfrak{L}_1, \ldots ,\mathfrak{L}_n)$. We call these cells ``full cells", and we denote the set of full cells by $\mathcal{C}$.

Now, for each $i=1,...,n$, there exist $\gtrsim_n |\mathcal{C}|$ full cells, each intersected by $\lesssim_n L_i/d^{n-1}$ lines in $\mathfrak{L}_i$ (where the first implicit multiplicative constant is smaller than 1, and the second larger than 1). Indeed, suppose that this is false for some $i \in \{1,\ldots,n\}$. Then, the number of full cells intersected by $\lesssim_n L_i/d^{n-1}$ lines in $\mathfrak{L}_i$ is $\lesssim_n |\mathcal{C}|$, thus there exist $\gtrsim_n |\mathcal{C}| \sim_n d^n$ cells, each intersected by $\gtrsim_n L_i/d^{n-1}$ lines in $\mathfrak{L}_i$. Choosing appropriately the constants hiding behind the $\gtrsim_n$ symbols, it follows that there exist $\gneq d^n \cdot L_i/d^{n-1}= L_i d$ incidences between the zero set $Z$ of $p$ and the lines of $\mathfrak{L}_i$ not in $Z$, which is a contradiction, since each line not lying in $Z$ can intersect $Z$ at most $d$ times. We denote by $\mathcal{C}_i$ the set of full cells intersected by $\lesssim_n L_i/d^{n-1}$ lines in $\mathfrak{L}_i$. We have just shown that $|\mathcal{C}_i| \geq c_{i,n} |\mathcal{C}|$, where $c_{i,n}$ is a constant smaller than 1, depending only on $i$ and $n$; note that we can choose it to be as close to 1 as we wish, of course changing appropriately the rest of the constants hiding behind the $\gtrsim_n$ symbols in the above analysis.

Now, by choosing the constants $c_{1,n}, \ldots, c_{n,n}$ to be appropriately large, it follows that there exists some full cell that is intersected by $\lesssim_n L_i/d^{n-1}$ lines in $\mathfrak{L}_i$, for all $i=1,\ldots ,n$. The lines intersecting the cell form the multijoints in the cell, which are $\gtrsim_n |J(\mathfrak{L}_1, \ldots ,\mathfrak{L}_n)|/d^n$. Therefore, projecting on a generic hyperplane and applying there Theorem \ref{theoremmult2} (which we have assumed holds in $\R^{n-1}$), we see that
$$\frac{|J(\mathfrak{L}_1, \ldots, \mathfrak{L}_n)|}{d^n}\lesssim_n \bigg( \frac{L_1}{d^{n-1}}\cdots \frac{L_{n-1}}{d^{n-1}}\bigg)^{1/(n-2)},$$
which means that 
$$d\lesssim_n \frac{L_1\cdots L_{n-1}}{|J(\mathfrak{L}_1, \ldots, \mathfrak{L}_n)|^{n-2}},$$
a contradiction for $A_n$ sufficiently large. The proof is complete.

\end{proof}

\textit{Proof of Theorem \ref{theoremmult2}.} Theorem \ref{theoremmult2} holds for $n=2$. Indeed, if $\mathfrak{L}_1$, $\mathfrak{L}_2$ are finite collections of $L_1$, $L_2$, respectively, lines in $\R^2$, then there exists at least one line of each collection through each point of $J(\mathfrak{L}_1,\mathfrak{L}_2)$, so $|J(\mathfrak{L}_1,\mathfrak{L}_2)|$ is equal to at most the number of all the pairs of the form $(l_1, l_2)$, where $l_1 \in \mathfrak{L}_1$, $l_2 \in \mathfrak{L}_2$, i.e. 
$$|J(\mathfrak{L}_1,\mathfrak{L}_2)| \leq L_1 L_2. $$

For $n \geq 3$, assume that Theorem \ref{theoremmult2} holds in $\R^{n-1}$.

Let $\mathfrak{L}_1, \dots ,\mathfrak{L}_n$ be finite collections of $L_1, \ldots, L_n$, respectively, lines in $\R^n$, with $L_1 \leq L_2 \leq \ldots \leq L_n$. By Lemma \ref{inductive}, there exists a subset $J$ of $J(\mathfrak{L}_1, \dots ,\mathfrak{L}_n)$, with $$|J| \gtrsim |J(\mathfrak{L}_1, \ldots ,\mathfrak{L}_n)|,$$ and a non-zero polynomial in $p \in \R[x_1, \dots, x_n]$, with $$\deg p \lesssim_n \frac{L_1 \cdots L_{n-1}}{|J(\mathfrak{L}_1, \ldots ,\mathfrak{L}_n)|^{n-2}},$$ vanishing on $J$. By Corollary \ref{quilodransimple}, $|J| \lesssim_n L_n \cdot \deg p$, so $$|J(\mathfrak{L}_1, \dots ,\mathfrak{L}_n)| \lesssim_n L_n \cdot\frac{L_1 \cdots L_{n-1}}{|J(\mathfrak{L}_1, \ldots ,\mathfrak{L}_{n-1})|^{n-2}},$$ and thus $|J(\mathfrak{L}_1, \dots ,\mathfrak{L}_n)| \lesssim_n (L_1\cdots L_n)^{1/(n-1)}$, by rearranging. Therefore, Theorem \ref{theoremmult2} holds in $\R^n$. \newline\qed

\subsection{\textbf{Generic joints.}} We wish to prove:\\ \\
\textbf{Theorem \ref{genericjoints}} \textit{Let $n \geq 2$. Let $\mathfrak{L}$ be a finite collection of L lines in $\R^n$. Then,}
$$|J^{k}(\mathfrak{L})| \leq c_n \left(\frac{L^{\frac{n}{n-1}}}{k^{\frac{n+1}{n-1}}}+\frac{L}{k}\right)$$ \textit{for all $k \geq n$, where $c_n$ is a constant depending only on $n$.}\\

In analogy to the multijoints in $\R^n$ situation, this will be achieved by combining Lemma \ref{quilodranmult} with Lemma \ref{jointsinductive} that follows, which ensures the existence of a polynomial of low degree vanishing on a large proportion of our set of generic joints in $\R^n$, for $n \geq 3$, under the assumptions that our set of joints is large and that Theorem \ref{genericjoints} holds in $\R^{n-1}$. In particular, given that Theorem \ref{genericjoints} holds in $\R^2$ (by the Szemer\'edi--Trotter theorem), the proof of Theorem \ref{genericjoints} will be completed by induction on $n$. As we have already mentioned, Lemma \ref{jointsinductive} will be proved using Guth--Katz partitioning.

\begin{lemma} \label{jointsinductive} Let $n\geq 3$. Assume that Theorem \ref{genericjoints} holds in $\R^{n-1}$, for some constant $c_{n-1}$. Let $\mathfrak{L}$ be a finite collection of $L$ lines in $\R^n$, and $k \geq n$, such that $$|J^k(\mathfrak{L})|\geq C_n \left(\frac{L^{\frac{n}{n-1}}}{k^{\frac{n+1}{n-1}}}+\frac{L}{k}\right)$$ for some sufficiently large constant $C_n$, depending only on $n$. Then, there exists a subset $J^k$ of $J^k(\mathfrak{L})$, with 
$$|J^k| \gtrsim |J^k(\mathfrak{L})|,$$
and a non-zero polynomial $p \in \R[x_1,\ldots,x_n]$, with
$$\deg p \leq A_n \frac{L^{n-1}}{|J^k(\mathfrak{L})|^{n-2}k^n},$$
vanishing on $J^k$, where $A_n$ is a constant depending only on $n$ and $c_{n-1}$.
\end{lemma}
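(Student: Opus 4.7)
The plan is to follow the template of Lemma~\ref{inductive}: apply Guth--Katz partitioning to $J^k(\mathfrak{L})$ at the degree
\[
d := A_n \frac{L^{n-1}}{|J^k(\mathfrak{L})|^{n-2}\, k^n},
\]
with $A_n$ to be chosen large in terms of $n$ and $c_{n-1}$, and show by contradiction that at least a fixed fraction of $J^k(\mathfrak{L})$ must lie on the zero set $Z_p$ of the partitioning polynomial $p$. Projecting $J^k(\mathfrak{L})$ to a generic hyperplane and applying the inductive hypothesis in $\R^{n-1}$ supplies an upper bound on $|J^k(\mathfrak{L})|$ which, combined with the lower bound assumed in the lemma, ensures $d>1$ so that Theorem~\ref{2.1.3} applies; the resulting partition has $\lesssim_n d^n$ cells, each containing $\lesssim_n |J^k(\mathfrak{L})|/d^n$ points of $J^k(\mathfrak{L})$.

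Assume for contradiction that fewer than half of $J^k(\mathfrak{L})$ lies on $Z_p$. Pigeonholing then produces $\gtrsim_n d^n$ \emph{full} cells, each containing $\gtrsim_n |J^k(\mathfrak{L})|/d^n$ joints. Since a line not lying in $Z_p$ meets $Z_p$ in at most $d$ points and so lies in at most $d+1$ cells, the total number of (line, full-cell) incidences is $\lesssim Ld$, and a second pigeonhole furnishes a full cell $C$ intersected by at most $\sim_n L/d^{n-1}$ lines of $\mathfrak{L}$. Write $\mathfrak{L}_C$ for this collection.

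The key structural observation is that $J^k(\mathfrak{L}) \cap C \subset J^k(\mathfrak{L}_C)$: every line of $\mathfrak{L}$ through a joint $x \in C$ passes through $C$ and so lies in $\mathfrak{L}_C$, so both the multiplicity in $[k,2k)$ and the genericity condition transfer intact. I would now project along a generic direction to $\R^{n-1}$: distinct lines through $x$ project to distinct lines, and since the genericity hypothesis forces any $n$ lines through $x$ to span $\R^n$, any $n-1$ of their directions are already linearly independent in $\R^n$, so their images remain linearly independent in $\R^{n-1}$ for generic projection. Thus the image of $J^k(\mathfrak{L}) \cap C$ consists of $\gtrsim_n |J^k(\mathfrak{L})|/d^n$ generic joints in $\R^{n-1}$, each of multiplicity in $[k,2k)$, for a collection of $\lesssim_n L/d^{n-1}$ projected lines.

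Applying Theorem~\ref{genericjoints} in $\R^{n-1}$ with constant $c_{n-1}$ then yields
\[
\frac{|J^k(\mathfrak{L})|}{d^n} \;\lesssim_n\; \frac{(L/d^{n-1})^{(n-1)/(n-2)}}{k^{n/(n-2)}} \;+\; \frac{L/d^{n-1}}{k}.
\]
If the first term dominates, the exponents collapse (using $n - (n-1)^2/(n-2) = -1/(n-2)$) to $d \lesssim_n L^{n-1}/(|J^k(\mathfrak{L})|^{n-2}\, k^n)$, contradicting the choice of $d$ once $A_n$ is large in terms of $n$ and $c_{n-1}$. If the second term dominates, substituting the definition of $d$ and rearranging gives $|J^k(\mathfrak{L})| \lesssim_n A_n^{1/(n-1)} L^{n/(n-1)}/k^{(n+1)/(n-1)}$, contradicting the hypothesis once $C_n$ is taken sufficiently large relative to $A_n$. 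The principal technical point is the genericity-preserving projection described above; once that is established the remaining pigeonhole bookkeeping and algebra are routine.
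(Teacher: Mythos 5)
Your proposal is correct and follows essentially the same approach as the paper: project to a generic hyperplane to establish $d>1$, apply Guth--Katz partitioning at degree $d$, and derive a contradiction from the cellular decomposition by locating a full cell met by few lines and applying the inductive hypothesis there.

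The one place you diverge is in how the term $L_{\mathrm{cell}}/k$ from the inductive Theorem~\ref{genericjoints} bound is disposed of. The paper first argues that, for $C_n$ large, every full cell must contain more than $k$ points of $J^k(\mathfrak{L})$, and then uses the combinatorial fact that $>k$ points each lying on $\geq k$ lines forces $L_{\mathrm{cell}}\gtrsim k^2$; this makes the first term of the bound dominate outright, so only the choice of $A_n$ is needed to close the contradiction. You instead perform a direct case split on which of the two terms dominates, and in the second case derive $|J^k(\mathfrak{L})|\lesssim_n A_n^{1/(n-1)}L^{n/(n-1)}/k^{(n+1)/(n-1)}$, which contradicts the hypothesis once $C_n$ is taken large relative to $A_n$. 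The bookkeeping of constants works out in your version as well ($A_n$ chosen first from $n$ and $c_{n-1}$, then $C_n$ chosen large), so the two are logically interchangeable; your variant is arguably a shade cleaner since it avoids the $L_{\mathrm{cell}}\gtrsim k^2$ step, while the paper's version isolates more explicitly the role of the lower bound on $|J^k(\mathfrak{L})|$. Your remark on genericity being preserved under projection (any $n-1$ of the directions through $x$ are linearly independent, and a generic projection preserves this) is also the correct justification, matching the paper's implicit use of the same fact.
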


\begin{proof} If we project $\R^n$ on a generic hyperplane, all the elements of the projection of $J^k(\mathfrak{L})$ are generic joints in the hyperplane (i.e., in $\R^{n-1}$) formed by the collection $\mathfrak{L}'$ of lines, where $\mathfrak{L}'$ is the set of projected lines of $\mathfrak{L}$ from $\R^n$ to the hyperplane; in particular, there exist at least $k$ and fewer than $2k$ lines of $\mathfrak{L}'$ through each of these joints. Therefore, by our assumption that Theorem \ref{theoremmult2} holds in $\R^{n-1}$, we have that 
$$|J^k(\mathfrak{L})| \leq c_{n-1}\left(\frac{L^{\frac{n-1}{n-2}}}{k^{\frac{n}{n-2}}}+\frac{L}{k}\right),$$
where, for $n\geq 2$, $c_n$ is the constant appearing in the statement of Theorem \ref{genericjoints}. 

On the other hand, for $C_n \geq 2c_{n-1}$, $|J^k(\mathfrak{L})|\geq 2c_{n-1} \frac{L}{k}$ by assumption, so $\frac{L^{\frac{n-1}{n-2}}}{k^{\frac{n}{n-2}}}\geq \frac{L}{k}$, and 
$$|J^k(\mathfrak{L})| \leq 2c_{n-1} \frac{L^{\frac{n-1}{n-2}}}{k^{\frac{n}{n-2}}}.$$
Thus, assuming from now on that $C_n \geq 2c_{n-1}$, we have that, for some constant $A_n\geq 2c_{n-1}$ (and independent of $C_n$) which will be fixed later, the quantity $$d=A_n\frac{L^{n-1}}{|J^k(\mathfrak{L})|^{n-2}k^n}$$
is larger than 1. We can thus apply Guth--Katz partitioning (Theorem \ref{2.1.3}) in $\R^n$, for the set $J^k(\mathfrak{L})$ and this $d$ as the upper bound for the degree of the partitioning polynomial. In particular, there exists a non-zero polynomial $p \in \R[x_1,\ldots, x_n]$, of degree at most $d$, whose zero set $Z$ decomposes $\R^n$ in $\lesssim_n d^n$ cells, each containing $\lesssim_n |J^k(\mathfrak{L})|/d^n$ elements of $J^k(\mathfrak{L})$.

By fixing $A_n$ to be sufficiently large, there exist $\gtrsim |J^k(\mathfrak{L})|$ elements of $J^k(\mathfrak{L})$ lying in $Z$. Indeed, let us assume for contradiction that there exist $\gtrsim |J^k(\mathfrak{L})|$ elements of $J^k(\mathfrak{L})$ in the union of the cells. Then, there exist $\gtrsim_n d^n$ cells, each containing $\gtrsim_n |J^k(\mathfrak{L})|/d^n$ elements of $J^k(\mathfrak{L})$. We call these cells ``full cells", and we denote the set of full cells by $\mathcal{C}$.

Suppose that there exists a full cell containing $\leq k$ elements of $J^k(\mathfrak{L})$. Then, $|J^k(\mathfrak{L})|/d^n \lesssim_n k$, which implies that
$$|J^k(\mathfrak{L})| \leq C'_n\frac{L^{\frac{n}{n-1}}}{k^{\frac{n+1}{n-1}}},$$
for some constant $C'_n$ depending only on $n$. Therefore, for $C_n \gneq C'_n$, we have a contradiction, and thus there exist $\geq k$ elements in each full cell. We continue assuming that $C_n \gneq C'_n$.

There exist $\gtrsim_n d^n$ full cells, each intersected by $\lesssim_n L/d^{n-1}$ lines in $\mathfrak{L}$ (where the first implicit multiplicative constant is smaller than 1, and the second larger than 1). Indeed, suppose that this is false, and that in fact the number of full cells intersected by $\lesssim_n L/d^{n-1}$ lines in $\mathfrak{L}$ is $\leq c_n |\mathcal{C}|$, for some constant $c_n <1$. Then, there exist $\gtrsim_n |\mathcal{C}| \sim_n d^n$ cells, each intersected by $\gtrsim_n L/d^{n-1}$ lines in $\mathfrak{L}$. Choosing appropriately the constants hiding behind the $\gtrsim_n$ symbols, it follows that there exist $\gneq d^n \cdot L/d^{n-1}= L d$ incidences between the zero set $Z$ of $p$ and the lines of $\mathfrak{L}$ not in $Z$, which is a contradiction, since each line not lying in $Z$ can intersect $Z$ at most $d$ times.

So, there exists some full cell that is intersected by $\lesssim_n L/d^{n-1}$ lines in $\mathfrak{L}$; in other words, if $L_{cell}$ is the number of lines intersecting the full cell, $L_{cell} \lesssim_n L/d^{n-1}$. On the other hand, the lines intersecting the cell form the generic joints in $J^k(\mathfrak{L})$ that lie in the cell, which are $\gtrsim_n |J^k(\mathfrak{L})|/d^n$. Therefore, projecting on a generic hyperplane and applying there Theorem \ref{genericjoints} (which we have assumed holds in $\R^{n-1}$), we see that
$$\frac{|J^k(\mathfrak{L})|}{d^n}\lesssim_n \frac{L_{cell}^{\frac{n-1}{n-2}}}{k^{\frac{n}{n-2}}}+\frac{L_{cell}}{k}  .$$
However, there exist $> k$ generic joints in the cell, each of which lies in at least $k$ lines of $\mathfrak{L}$; thus, $L_{cell} \geq k+(k-1)+\ldots +1 \gtrsim k^2$, from which it follows that
$$\frac{L_{cell}^{\frac{n-1}{n-2}}}{k^{\frac{n}{n-2}}}\gtrsim_n \frac{L_{cell}}{k}.$$
Therefore, $$\frac{|J^k(\mathfrak{L})|}{d^n}\lesssim_n \frac{L_{cell}^{\frac{n-1}{n-2}}}{k^{\frac{n}{n-2}}}\lesssim_n \frac{\bigg( \frac{L}{d^{n-1}}\bigg)^{\frac{n-1}{n-2}}}{k^{\frac{n}{n-2}}},$$
which in turn implies
$$d\leq C''_n \frac{L^{n-1}}{|J^k(\mathfrak{L})|^{n-2}k^n},$$
for some constant $C''_n$ depending only on $n$.
By fixing $A_n$ to be larger than $C''_n$ (which is independent of $C_n$), we are led to a contradiction. This completes the proof.

\end{proof}

\textit{Proof of Theorem \ref{genericjoints}.} Theorem \ref{genericjoints} holds in $\R^2$ (by the Szemer\'edi--Trotter theorem). Assume that Theorem \ref{genericjoints} holds in $\R^{n-1}$, for some $n \geq 3$. We will show that Theorem \ref{genericjoints} holds in $\R^n$.

Let $\mathfrak{L}$ be a finite collection of $L$ lines in $\R^n$ and $k \geq n$, such that
$$|J^k(\mathfrak{L})|\geq C_n \left(\frac{L^{\frac{n}{n-1}}}{k^{\frac{n+1}{n-1}}}+\frac{L}{k}\right),$$
where $C_n$ is the constant appearing in the statement of Lemma \ref{jointsinductive}. It follows by Lemma \ref{jointsinductive} that there exists a subset $J^k$ of $J^k(\mathfrak{L})$ with 
$$|J^k| \gtrsim |J^k(\mathfrak{L})|,$$
and a non-zero polynomial $p \in \R[x_1,\ldots,x_n]$, with
$$\deg p \lesssim_n \frac{L^{n-1}}{|J^k(\mathfrak{L})|^{n-2}k^n},$$
vanishing on $J^k$. Therefore, by Lemma \ref{quilodranmult}, $$|J^k|k\lesssim_n L \cdot \deg p\lesssim_n L\; \frac{L^{n-1}}{|J^k(\mathfrak{L})|^{n-2}k^n},$$
which implies
$$|J^k(\mathfrak{L})|\lesssim_n\frac{L^{\frac{n}{n-1}}}{k^{\frac{n+1}{n-1}}}$$
by rearranging. The proof of the theorem is now complete.

\qed

\section{Transversality of more general curves} \label{section6}

In this section, we generalise Theorems \ref{theoremmult2} and \ref{genericjoints} to multijoints and generic joints formed by real algebraic curves in $\R^n$.

We consider the family $\mathcal{F}_n$ of all subsets $\gamma$ of $\R^n$ with the property that, if $x \in \gamma$, then a basic neighbourhood of $x$ in $\gamma$ is either $\{x\}$ or the finite union of parametrised curves, each homeomorphic to a semi-open line segment with one endpoint the point $x$. If there exists a local parametrisation $f:[0,1)\rightarrow \gamma$ of a $\gamma \in \Gamma$, such that $f(0)=x$ and $f'(0)\neq 0$, then the line in $\R^n$ passing through $x$ with direction $f'(0)$ is tangent to $\gamma$ at $x$. If $\Gamma \subset \mathcal{F}_n$, we denote by $T_x^{\Gamma}$ the set of directions of all tangent lines at $x$ to the elements of $\Gamma$ passing through $x$ (note that $T_x^{\Gamma}$ may be empty and that there may exist many tangent lines to an element of $\Gamma$ at $x$).

\begin{definition} Let $\Gamma$ be a finite collection of sets in $\mathcal{F}_n$. A point $x$ in $\R^n$ is a \emph{joint} formed by $\Gamma$ if there exist $v_1,\ldots,v_n \in T_x^{\Gamma}$ such that $\{v_1,\ldots,v_n\}$ spans $\R^n$.

A point $x \in \R^n$ is a \emph{generic joint} formed by $\Gamma$ if any $n$ vectors in $T_x^{\Gamma}$ span $\R^n$.

For eack $k \geq n$, we denote by $J^k(\Gamma)$ the set of generic joints $x$ formed by $\Gamma$, such that $|T_x^{\Gamma}|\in [k,2k)$.

\end{definition}

\begin{definition} Let $\Gamma_1$, ..., $\Gamma_n$ be finite collections of sets in $\mathcal{F}_n$. A point $x$ in $\R^n$ is a \emph{multijoint} formed by these collections if, for all $i=1,\ldots,n$, $x$ belongs to some $\gamma_i \in \Gamma_i$, with the property that there exists at least one vector $v_i$ in $T_x^{\{\gamma_i\}}$, such that $\{v_1,...,v_n\}$ spans $\R^n$.

We denote by $J(\Gamma_1,...,\Gamma_n)$ the set of multijoints formed by $\Gamma_1$, ..., $\Gamma_n$.
\end{definition}

Under certain assumptions on the properties of the sets in $\mathcal{F}_n$, the corresponding statements of Theorems \ref{theoremmult2} and \ref{genericjoints} still hold:

\begin{theorem} \label{theoremmult3} Let $n \geq 2$ and $b>0$. Let $\Gamma_1, \ldots, \Gamma_n$ be finite collections of irreducible real algebraic curves in $\R^n$, of degree at most $b$. Then,
\begin{displaymath}|J(\Gamma_1, \ldots, \Gamma_n)| \leq c_{n,b}\;(|\Gamma_1|\cdots |\Gamma_n|)^{1/(n-1)}, \end{displaymath}
where $c_{n,b}$ is a constant depending only on $n$ and $b$.
\end{theorem}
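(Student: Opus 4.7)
The plan is to mirror the proof of Theorem \ref{theoremmult2} step by step, replacing lines by irreducible real algebraic curves of degree at most $b$, and to track how $b$ enters the estimates via Bezout's theorem. The argument proceeds by induction on $n$. For the base case $n=2$, two distinct irreducible algebraic curves in $\R^2$ of degree at most $b$ share no common component, and hence meet in at most $b^2$ points; this yields $|J(\Gamma_1,\Gamma_2)| \leq b^2|\Gamma_1|\,|\Gamma_2|$, which is the required bound.

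The central analytic ingredient is a curve-version of Lemma \ref{firstpolyd}: if $J$ is a set of joints formed by a collection $\Gamma$ of irreducible real algebraic curves in $\R^n$ of degree at most $b$, and $J$ lies in the zero set of a non-zero polynomial $p$ of degree at most $d$, then some curve in $\Gamma$ contains at most $b\,d$ points of $J$. The proof runs exactly as for Lemma \ref{firstpolyd}: take $p$ of minimal degree vanishing on $J$; if every curve carried more than $b\,d$ points of $J$ then, by Bezout, $p$ would vanish identically on every curve of $\Gamma$; the joints would become singular points of $\{p=0\}$, since the $n$ tangent directions at each joint span $\R^n$, so $\nabla p$ vanishes on $J$ and hence $\nabla p \equiv 0$ by minimality, which is impossible in characteristic zero. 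Iterating the removal argument exactly as in Lemma \ref{quilodran} and Corollary \ref{quilodransimple} then gives
\begin{equation*}
|J|\,\leq\, b\,\bigl|\Gamma_1\cup\cdots\cup\Gamma_n\bigr|\cdot d.
\end{equation*}

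For the inductive step, assuming the theorem in $\R^{n-1}$, I would project $J(\Gamma_1,\ldots,\Gamma_n)$ to a generic hyperplane $H\cong\R^{n-1}$. A generic projection is injective on the finite joint set, sends each irreducible curve of degree $\leq b$ to an algebraic curve of degree $\leq b$ (possibly splitting into at most $b$ irreducible components, which only inflates the count by a $b$-dependent factor), and sends the $n$ spanning tangent directions at each joint to $n$ vectors any $n-1$ of which span $H$. The inductive hypothesis therefore yields the preliminary bound $|J(\Gamma_1,\ldots,\Gamma_n)| \lesssim_{n,b} (|\Gamma_1|\cdots|\Gamma_{n-1}|)^{1/(n-2)}$, which ensures that $d := A_{n,b}\,\tfrac{|\Gamma_1|\cdots|\Gamma_{n-1}|}{|J(\Gamma_1,\ldots,\Gamma_n)|^{n-2}}$ exceeds $1$ for a suitable large constant $A_{n,b}$. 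I would then apply Guth--Katz partitioning (Theorem \ref{2.1.3}) with this $d$, and run the cell dichotomy of Lemma \ref{inductive}. The only change inside the cell analysis is that each curve of $\Gamma_i$ not contained in the zero set $Z$ of the partitioning polynomial meets $Z$ in at most $b\,d$ points rather than $d$ points, so the Guth--Katz incidence bookkeeping yields the same ``full cell'' structure up to constants depending on $b$. Either a positive proportion of $J(\Gamma_1,\ldots,\Gamma_n)$ lies on $Z$, giving the desired low-degree vanishing polynomial, or one finds a full cell intersected by $\lesssim_{n,b} |\Gamma_i|/d^{n-1}$ curves of each $\Gamma_i$ and containing $\gtrsim_n |J(\Gamma_1,\ldots,\Gamma_n)|/d^n$ joints, to which the projected inductive hypothesis applies and forces a contradiction once $A_{n,b}$ is large enough. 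Combining the first alternative with the curve-version of Corollary \ref{quilodransimple} above then gives $|J(\Gamma_1,\ldots,\Gamma_n)| \lesssim_{n,b} (|\Gamma_1|\cdots|\Gamma_n|)^{1/(n-1)}$, closing the induction.

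The main obstacle I expect is the behaviour of irreducible curves under generic projection: irreducibility can break on passing from $\R^n$ to $\R^{n-1}$, so one must either (i) formulate and carry a slightly stronger inductive statement covering unions of at most $b$ irreducible components of total degree $\leq b$, or (ii) decompose each projected curve into its $\leq b$ irreducible pieces and absorb the factor into $c_{n,b}$. One must also verify that the finite list of ``bad'' projection conditions (injectivity on the joint set, degree preservation of each curve, and preservation of the spanning property of tangent directions at each joint) can be simultaneously avoided on a set of projection directions of full measure in the Grassmannian, which is standard but needs to be spelled out. Beyond this projection bookkeeping and the Bezout replacements outlined above, every step follows the proof of Theorem \ref{theoremmult2} essentially verbatim.
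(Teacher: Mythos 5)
Your proposal follows the same route as the paper (induction on $n$, Guth--Katz partitioning, projection onto a generic hyperplane, and a Bezout-flavoured deletion lemma in place of Lemma~\ref{firstpolyd}), and the projection discussion is accurate: the paper proves essentially your degree-preservation statement as Lemma~\ref{projection}, and handles the loss of irreducibility implicitly in the way you describe in option (ii). The curve-version of Lemma~\ref{firstpolyd} you sketch is also correct, and the base case is fine.

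However, there is a genuine gap in the cell-counting step, and it is exactly where the paper invokes Lemma~\ref{pathconnected}. You assert that the only change inside the Guth--Katz bookkeeping is that a curve not in $Z$ meets $Z$ in $\lesssim_{n,b}d$ points instead of $\leq d$, and that this alone preserves the ``many full cells forces many $Z$-incidences'' contradiction. But that contradiction requires a bound on the number of \emph{cells} a curve can visit, and for a curve $\gamma$ with $c$ path-connected components visiting $K$ cells one only gets (number of $Z$-crossings) $\geq K-c$. For lines, $c=1$ and the argument closes with Bezout alone; for real algebraic curves, nothing about the degree bound \emph{a priori} controls $c$, so the step $K\lesssim_{n,b}d$ needs the additional fact that an irreducible real algebraic curve of degree $\leq b$ in $\R^n$ has $\lesssim_{n,b}1$ path-connected components. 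The paper supplies exactly this as Lemma~\ref{pathconnected}, via Harnack's theorem for plane curves combined with generic planar projection, and cites it at the precise moment of the incidence count. Without this ingredient your argument does not close: a curve could, for all you have shown, snake through $\gg d$ cells while crossing $Z$ only $O_{n,b}(d)$ times. You should add this component-count bound (and its Harnack-based justification) to your outline; with it, the rest of your proof goes through as you describe.
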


\begin{theorem} \label{genericjoints2} Let $n \geq 2$, $b>0$. Let $\Gamma$ be a finite collection of real algebraic curves in $\R^n$, of degree at most $b$. Then, for all $k \geq n$,
$$|J^{k}(\Gamma)| \leq c_{n,b} \left(\frac{|\Gamma|^{\frac{n}{n-1}}}{k^{\frac{n}{n-1}+\frac{1}{n-1}\cdot \frac{1}{B-1}}}+\frac{|\Gamma|}{k}\right),$$ 
where $B:=\binom{b+2}{2}-1$.
\end{theorem}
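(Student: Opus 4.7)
The plan is to follow the proof strategy of Theorem \ref{genericjoints} step by step, promoting each ingredient to the setting of real algebraic curves of degree at most $b$. The extra factor $\frac{1}{(n-1)(B-1)}$ in the exponent of $k$ will track the $B$ degrees of freedom of a plane algebraic curve of degree $\leq b$, and by induction on $n$ the base case will come from a Pach--Sharir incidence bound in place of Szemer\'edi--Trotter.

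For the base case $n=2$, I would first decompose each curve into its irreducible components, which at worst multiplies constants by a $b$-dependent factor absorbed into $c_{n,b}$. Irreducible plane algebraic curves of degree $\leq b$ form a family with $B=\binom{b+2}{2}-1$ degrees of freedom and multiplicity type $\lesssim b^2$ by B\'ezout, so the Pach--Sharir incidence theorem yields
$$I(\mathcal{P},\Gamma)\lesssim_b |\mathcal{P}|^{B/(2B-1)}|\Gamma|^{(2B-2)/(2B-1)}+|\mathcal{P}|+|\Gamma|.$$
Setting $\mathcal{P}=J^k(\Gamma)$ and using $k|J^k(\Gamma)|\leq I(J^k(\Gamma),\Gamma)$ gives, after rearrangement,
$$|J^k(\Gamma)|\lesssim_b \frac{|\Gamma|^2}{k^{2+1/(B-1)}}+\frac{|\Gamma|}{k},$$
which is exactly the target bound when $n=2$.

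For $n\geq 3$, I would prove curve analogues of Lemma \ref{quilodranmult} and Lemma \ref{jointsinductive}. Lemma \ref{firstpolyd} generalises verbatim, with the only change that by B\'ezout a curve of degree $\leq b$ not contained in $Z(p)$ meets $Z(p)$ in at most $b\deg p$ points; indeed at each generic joint the tangent directions of $n$ curves through it span $\R^n$, so if every curve of $\Gamma$ lay in $Z(p)$ then $\nabla p$ would vanish on $J^k(\Gamma)$, contradicting the minimality of $\deg p$. Iterating as in Lemma \ref{quilodranmult}, and noting that each curve contributes $\lesssim_b 1$ tangent directions at a fixed joint so that the genericity property survives until at least $k-n+1$ tangent directions have been removed at each joint, one obtains $|J|k\lesssim_{n,b}|\Gamma|\deg p$ for any $J\subset J^k(\Gamma)$ contained in the zero set of a non-zero polynomial. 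For the inductive lemma I would project $\R^n$ onto a generic hyperplane: this preserves both the degree of each curve and linear independence of tangent directions at a joint, so the inductive hypothesis provides an a priori bound on $|J^k(\Gamma)|$, making
$$d=A_{n,b}\frac{|\Gamma|^{1/(n-1)}}{k^{1/(n-1)+1/((n-1)(B-1))}}$$
a valid Guth--Katz partitioning degree for a suitable large constant $A_{n,b}$. Running the dichotomy of Lemma \ref{jointsinductive}, either a positive proportion of joints lies on $Z(p)$ and we close by the curve analogue of Lemma \ref{quilodranmult}, or some cell contains $\gtrsim_n |J^k(\Gamma)|/d^n$ joints and is crossed by $\lesssim_{n,b}|\Gamma|/d^{n-1}$ curves, in which case projecting the cell and applying the $(n-1)$-dimensional theorem contradicts the choice of $A_{n,b}$.

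The main obstacle is bookkeeping: the B\'ezout factor $b$ appears in every curve-curve and curve-hypersurface intersection bound, and each irreducible curve of degree $\leq b$ can contribute up to $\lesssim_b 1$ tangent directions at a fixed point. These factors must be tracked carefully through the Quilodr\'an iteration, the cell-crossing count in Guth--Katz, and the genericity bookkeeping, to confirm that they affect only the final constant $c_{n,b}$ and not the exponent of $k$. A secondary subtlety is verifying that a generic hyperplane projection genuinely preserves the generic-joint condition; this is a standard transversality argument but must be stated explicitly.
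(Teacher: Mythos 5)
Your overall architecture is right — decompose into irreducible components, prove curve analogues of Lemmas \ref{firstpolyd} and \ref{quilodranmult} via B\'ezout, run Guth--Katz partitioning, project a heavy cell onto a generic hyperplane, induct on $n$ — and that does match the paper. But you are missing the two ingredients that are actually new relative to the line case, and both are load-bearing.

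First, the base case $n=2$ does not follow from the classical Pach--Sharir theorem with $B=\binom{b+2}{2}-1$ degrees of freedom. Pach--Sharir requires that through any $s$ points pass a bounded number of curves in the family; for irreducible plane algebraic curves of degree $\leq b$, the only value of $s$ for which this is true in the obvious way is $b^2+1$ (via B\'ezout), giving a strictly worse exponent for $b\geq 3$. The improvement to $B$ degrees of freedom is precisely the content of the Wang--Yang--Zhang theorem (Theorem \ref{wangyangzhang} in the paper), which is nontrivial and rests on a linear-algebraic refinement (Lemma \ref{dimA}): among any $b^2+1$ points of a single curve there is a $B$-point subset that determines the curve within the family. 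Your sketch quotes the correct inequality but attributes it to an argument that does not yield it.

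Second, in the cellular dichotomy you say that if a full cell has $\gtrsim_n |J^k(\Gamma)|/d^n$ joints and is crossed by $\lesssim_{n,b}|\Gamma|/d^{n-1}$ curves, then projecting and applying the $(n-1)$-dimensional estimate closes the contradiction. But the $(n-1)$-dimensional bound is a sum of two terms, and to make the contradiction work you must show the first term dominates, i.e.\ that $|\Gamma_C|\gtrsim_{n,b} k^{B/(B-1)}$. In the line case the corresponding lower bound $L_{\mathrm{cell}}\gtrsim k^2$ comes from a greedy argument (distinct lines through distinct joints mostly differ); that argument fails for curves because infinitely many degree-$\leq b$ curves can pass through any two given points. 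The paper replaces it by a second use of the WYZ machinery: it projects the cell onto a generic $2$-plane, takes a subset $J_C$ of exactly $\lambda_{n,b}k^{1/(B-1)}$ joints, and counts $B$-tuples via Lemma \ref{dimA} to obtain $|J_C|\,k \lesssim_{n,b}|\Gamma_C|+|J_C|^B$, from which $|\Gamma_C|\gtrsim_{n,b}k^{B/(B-1)}$ follows for $\lambda_{n,b}$ small. Correspondingly, the threshold in the case split is $k^{1/(B-1)}$ rather than $k$ (your sketch does not mention the case split at all, and uses the line threshold implicitly). Without this argument the inductive step does not close.
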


(Note that Theorem \ref{genericjoints2} implies Theorem \ref{genericjoints}.)

We clarify here that every real algebraic curve $\gamma$ in $\R^n$ is contained in a complex algebraic curve in $\C^n$ (viewing $\R^n$ as a subset of $\C^n$), and the degree of $\gamma$ is defined as the degree of the smallest complex algebraic curve containing $\gamma$ (e.g., see \cite{Iliopoulou_12} for full details).

The proofs of Theorems \ref{theoremmult3} and \ref{genericjoints2} are similar to the proofs of Theorems \ref{theoremmult2} and \ref{genericjoints}, respectively, we will thus give a sketch of them, without dwelling on all details. Crucial for the proofs of both theorems is the following.

\begin{lemma}\label{projection} Let $n \geq 2$ and $\gamma$ a real algebraic curve in $\R^n$. Then, the projection of $\gamma$ on a generic hyperplane is contained in some real algebraic curve in the hyperplane, of degree at most $\deg \gamma$.

\end{lemma}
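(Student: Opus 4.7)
\textit{Plan.} The approach is classical projective geometry applied to the complexification of $\gamma$. Let $d = \deg\gamma$ and let $\gamma_{\mathbb{C}} \subset \mathbb{C}^n$ be the smallest complex algebraic curve containing $\gamma$, so $\deg\gamma_{\mathbb{C}} = d$. Since $\gamma \subset \mathbb{R}^n$ is invariant under complex conjugation, its Zariski closure in $\mathbb{C}^n$ is also invariant, so we may take $\gamma_{\mathbb{C}}$ to be cut out by polynomials with real coefficients. I then pass to the projective closure $\overline{\gamma_{\mathbb{C}}} \subset \mathbb{CP}^n$, which remains a degree-$d$ curve defined over $\mathbb{R}$.

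Given a real hyperplane $H \subset \mathbb{R}^n$ with normal direction $v$, the orthogonal projection $\pi_H:\mathbb{R}^n\to H$ extends to the linear projection from the point at infinity $P_v = [v:0] \in \mathbb{CP}^n$ to a projective hyperplane not through $P_v$. Because $\overline{\gamma_{\mathbb{C}}}$ meets the hyperplane at infinity in only finitely many points, for generic $v$ one has $P_v \notin \overline{\gamma_{\mathbb{C}}}$; moreover, for generic $v$ the curve $\overline{\gamma_{\mathbb{C}}}$ is not contained in any single line through $P_v$ (this is automatic unless $\overline{\gamma_{\mathbb{C}}}$ is itself a line, a case in which the lemma is trivial). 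Under these conditions, the restriction of the projection to $\overline{\gamma_{\mathbb{C}}}$ is a well-defined non-constant morphism, and its image $C' \subset \mathbb{CP}^{n-1}$ is a complex algebraic curve defined over $\mathbb{R}$.

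To control $\deg C'$, I would use the defining property of degree: a generic hyperplane $\Pi' \subset \mathbb{CP}^{n-1}$ meets $C'$ in $\deg C'$ points, and the preimage of $\Pi'$ under the projection is a hyperplane $\Pi \subset \mathbb{CP}^n$ through $P_v$, which intersects $\overline{\gamma_{\mathbb{C}}}$ in exactly $d$ points. Since every intersection point of $\Pi' \cap C'$ is the image of some point of $\Pi \cap \overline{\gamma_{\mathbb{C}}}$, we obtain $\deg C' \leq d$. Passing to the affine chart associated with $H$ gives a complex algebraic curve in $H_{\mathbb{C}} \cong \mathbb{C}^{n-1}$ of degree at most $d$, defined over $\mathbb{R}$, containing $\pi_H(\gamma_{\mathbb{C}}) \supseteq \pi_H(\gamma)$. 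By the definition of the degree of a real algebraic curve, $\pi_H(\gamma)$ is therefore contained in a real algebraic curve in $H$ of degree at most $d$.

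The main obstacle I anticipate is the clean identification of the exceptional set of non-generic hyperplanes, together with a careful verification that $C'$ is defined over $\mathbb{R}$; the latter amounts to checking that the elimination ideal describing the image is generated by real polynomials, which follows from the fact that the projection map and the ideal of $\overline{\gamma_{\mathbb{C}}}$ are both real. Once these points are handled, the degree inequality for $C'$ follows directly from intersection-theoretic bookkeeping, and no further ingredients are required.
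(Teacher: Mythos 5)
Your proof is correct, and it takes a genuinely different route from the paper's. The paper works entirely within affine elimination theory: it fixes a lexicographical monomial order, shows that the Gr\"obner basis of the elimination ideal $I_1(\gamma)$ is the degree-$\leq 1$ truncation of a Gr\"obner basis of $I(\gamma)$, uses monomials outside the initial ideal to bound the dimension of $V(I_1(\gamma))$ by $1$, and then compares Hilbert series to conclude $\deg V(I_1(\gamma)) \leq \deg\gamma$. You instead pass to the projective closure and use the classical geometric characterisation of degree: project from the point at infinity $P_v \notin \overline{\gamma_{\C}}$, and for a generic hyperplane $\Pi'$ in the target observe that $\Pi' \cap C'$ is covered by $\Pi \cap \overline{\gamma_{\C}}$ where $\Pi = \pi^{-1}(\Pi')$, so $\deg C' \leq \deg\overline{\gamma_{\C}}$. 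Both arguments are valid and of comparable length; yours is shorter where the paper spends effort on Gr\"obner combinatorics, but in exchange you assume intersection-theoretic facts (that a hyperplane not containing a component of a degree-$d$ projective curve meets it in at most $d$ points) which the paper's Hilbert-series route avoids invoking explicitly. Two small points you gloss over that would deserve a line each in a finished write-up: you should note that, for generic $\Pi'$, the hyperplane $\Pi = \pi^{-1}(\Pi')$ contains no component of $\overline{\gamma_{\C}}$, so that $\Pi \cap \overline{\gamma_{\C}}$ is indeed finite of cardinality at most $d$; and your parenthetical ``automatic unless $\overline{\gamma_{\C}}$ is itself a line'' is slightly off --- a line not through $P_v$ still projects to a line, and the projection degenerates only if a component has direction exactly $v$, which is excluded by genericity of $v$. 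Neither affects the correctness of the argument.
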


\begin{proof} Each real algebraic curve in $\R^n$ is contained in a complex algebraic curve in $\C^n$ with the same degree. Moreover, the intersection of any complex algebraic curve in $\C^n$ with $\R^n$ is a real variety that is either 0-dim or 1-dim (see \cite{Iliopoulou_14}). So, since the projection of a real algebraic curve on a generic hyperplane cannot be contained in a 0-dim variety, it suffices to prove the Lemma for projections of complex algebraic curves instead.

Let $\gamma$ be a complex algebraic curve in $\C^n$, of degree $b$. Let $\Pi$ be a generic hyperplane in $\C^n$. By a change of variables, projecting any $x=(x_1,\ldots,x_n)\in \C^n$ on $\Pi$ corresponds to eliminating $x_1$. Thus (see \cite{CLO_91}), the smallest complex variety in $\Pi$ containing the projection $p(\gamma)$ of $\gamma$ on $\Pi$ is $V(I_1(\gamma))$, whose ideal is the elimination ideal $I_1(\gamma)=I(\gamma)\cap\C[x_2,\ldots,x_n]$; here $I(\gamma)$ is the ideal of $\gamma$. 

It holds that $V(I_1(\gamma))$ has dimension 1 and degree at most $b$. We give a proof, adding references for the necessary algebraic geometric background. 

We consider the lexicographical order $\prec$ on the set of monomials in $n$ variables, such that $x_1\succ x_2 \succ \ldots \succ x_n$. With this order, if $G$ is a Gr\"obner basis of $I(\gamma)$, then $G':=G \cap \C[x_2,\ldots,x_n]$ is a Gr\"obner basis of $I_1(\gamma)$ (see \cite{CLO_91}). Note that a Gr\"obner basis $\mathcal{G}$ of an ideal $I$ generates the ideal, while the $\prec$-leading terms of the elements of $\mathcal{G}$ generate the initial ideal $in_{\prec}(I)$ of $I$ (the ideal generated by the $\prec$-leading terms of the elements in $I$).

Now, we see that $\{$monomials in $in_{\prec}(I(\gamma))\cap \C[x_2,\ldots,x_n]\}\subset \{$monomials in $in_{\prec}(I_1(\gamma))\}$. Indeed, let $p$ be a monomial in $in_{\prec}(I(\gamma))$ $\cap$ $\C[x_2,$ $\ldots,x_n]\}$. Since $in_{\prec}(I(\gamma))$ is a monomial ideal, $p$ is divisible by the $\prec$-leading term $in(g)$ of a $g \in G$; so, $in(g)$ is not a multiple of $x_1$, and thus $g \in G\cap \C[x_2,\ldots,x_n]=G'$. Hence, $p$ is divisible by the $\prec$-leading term of an element of $G'$, therefore $p\in in_{\prec}(I_1(\gamma))$.

The dimension of any variety $V$ in $\C^n$ is equal to the cardinality of a maximal subset $S$ of $\{x_1,\ldots,x_n\}$, such that no monomial in the variables in $S$ belongs to the initial (w.r.t. $\prec$) ideal of the ideal of $V$ (see \cite{Stu_05}). Since $\gamma$ is 1-dim, for any $i,j \in \{1,\ldots,n\}$ with $i\neq j$ there exists a monomial $p_{ij}$ in $in_{\prec}(I(\gamma))$ in the variables $x_i$, $x_j$. Now, fix any $i,j \in \{2,\ldots,n\}$ with $i\neq j$. Since $p_{ij}\in in_{\prec}(I(\gamma))\cap \C[x_2,\ldots,x_n]$, it follows by the above discussion that $p_{ij}\in in_{\prec}(I_1(\gamma))$. So, for any $i,j \in \{2,\ldots, n\}$ with $i\neq j$, there exists a monomial in $in_{\prec}(I_1(\gamma))$ in the variables $x_i$, $x_j$. Therefore, $V(I_1(\gamma))$ is at most 1-dim, and thus a curve, due to the genericity of $\Pi$.

It is also known that the degree of a complex algebraic curve $\tilde{\gamma}$ in $\C^n$ is equal to $P(1)$, where $P$ is the univariate polynomial such that the Hilbert series $H_{\tilde{\gamma},\prec}(t)=\sum_{i=1}^{+\infty}d_it^i$ of the curve is equal to $\frac{P(t)}{1-t}$ (where, for all $i$, $d_i$ is the number of monomials in $n$ variables of degree $i$ that are not in the initial (w.r.t. $\prec$) ideal of the ideal of $\tilde{\gamma}$). By the discussion above, for each $i$, the monomials in variables $x_2,\ldots, x_n$ of degree $i$ that are not in $in_{\prec}(I_1(\gamma))$ are also not in $in_{\prec}(I(\gamma))$. Therefore, each coefficient in the Hilbert series of $V(I_1(\gamma))$ is smaller than or equal to the corresponding coefficient in the Hilbert series of $\gamma$; so, $V(I_1(\gamma))$ has degree at most $b$.

\end{proof}

Moreover, we will use the following generalisation of Lemma \ref{firstpolyd} (for $\mathbb{F}=\R$):

\begin{lemma} \label{firstpolyd'} Let $n \geq 2$, $b>0$. Let $J$ be a set of joints formed by a collection $\Gamma$ of irreducible real algebraic curves in $\R^n$, of degree at most $b$. If $J$ lies in the zero set of some non-zero polynomial in $\R[x_1,\ldots,x_n]$, of degree at most $d$, then there exists a curve in $\Gamma$ containing $\lesssim_b d$ points of $J$.
\end{lemma}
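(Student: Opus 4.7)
The plan is to follow the proof of Lemma~\ref{firstpolyd} in the case $\mathbb{F}=\R$, making one essential substitution: where that proof used the fact that a line not contained in $V(p)$ meets $V(p)$ in at most $\deg p$ points, I will instead use B\'ezout's theorem to control the intersection of a polynomial zero set with a curve of bounded degree.

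First I would take $p\in\R[x_1,\ldots,x_n]$ a nonzero polynomial of minimal degree vanishing on $J$; by hypothesis $\deg p\le d$. Suppose for contradiction that every $\gamma\in\Gamma$ contains strictly more than $bd$ points of $J$. Since $\gamma$ is an irreducible real algebraic curve of degree at most $b$, it is contained in a complex algebraic curve $\gamma_{\C}\subset\C^n$ of degree at most $b$. By B\'ezout's theorem applied to $\gamma_{\C}$ and the hypersurface $V(p)\subset\C^n$, either some irreducible component of $\gamma_{\C}$ meeting $\gamma$ lies inside $V(p)$, or $|\gamma\cap V(p)|\le|\gamma_{\C}\cap V(p)|\le bd$. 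Our assumption rules out the latter, so $p$ must vanish on $\gamma$. Hence $p$ vanishes on every curve of $\Gamma$.

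Next, at each joint $x\in J$ there exist tangent vectors $v_1,\ldots,v_n\in T_x^{\Gamma}$ spanning $\R^n$, each of the form $v_i=f_i'(0)$ for a local parametrization $f_i:[0,1)\to\gamma_i$ of some $\gamma_i\in\Gamma$ through $x$ with $f_i(0)=x$. Since $p$ vanishes on $\gamma_i$, the real-analytic function $t\mapsto p(f_i(t))$ is identically zero near $0$, and differentiation at $t=0$ yields $\langle\nabla p(x),v_i\rangle=0$ for each $i$. The spanning property then forces $\nabla p(x)=0$. Thus $\nabla p$ vanishes on $J$; if some partial derivative $\partial_j p$ were nonzero, it would be a nonzero polynomial of strictly smaller degree vanishing on $J$, contradicting minimality, and if $\nabla p\equiv 0$ then $p$ is a constant polynomial, necessarily zero (as $J\neq\emptyset$), again a contradiction.

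The main (and essentially only) new input beyond the proof of Lemma~\ref{firstpolyd} is the B\'ezout bound, which is where the multiplicative factor of $b$ enters and produces the conclusion $\lesssim_b d$ rather than $\le d$. Working over $\R$ sidesteps the positive-characteristic subtleties of the original lemma entirely: ordinary partial derivatives drop the degree, so no Hasse derivatives or $\mathrm{char}(\mathbb{F})$-th power argument are needed. The only point requiring a touch of care is confirming that the passage from $\gamma$ to $\gamma_{\C}$ does not weaken the intersection count, which is immediate since $\gamma\subset\gamma_{\C}$ as subsets of $\C^n$.
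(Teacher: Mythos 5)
Your proof is correct and takes exactly the route the paper intends: the paper states the proof is ``completely analogous'' to that of Lemma~\ref{firstpolyd}, with B\'ezout's theorem (Theorem~\ref{bezout}) replacing the trivial fact that a line meets a hypersurface of degree $d$ in at most $d$ points, and you supply precisely those steps (the B\'ezout bound to force $p$ to vanish on every curve of $\Gamma$, the tangent-vector argument to show $\nabla p$ vanishes at every joint, and minimality to finish, with the characteristic-$0$ simplification correctly noted). One small remark: rather than re-deriving the B\'ezout bound by complexification --- which tacitly relies on the true but unstated fact that the Zariski closure $\gamma_{\C}\subset\C^n$ of an irreducible real algebraic curve $\gamma$ is itself irreducible, so that ``some component lies in $V(p)$'' really does give $\gamma\subset V(p)$ --- you could simply invoke Theorem~\ref{bezout} as a black box, which is what the paper does.
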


The proof of Lemma \ref{firstpolyd'} is completely analogous to the proof of Lemma \ref{firstpolyd}; the only extra element is the use of (the following corollary of) B\'ezout's theorem (e.g., see \cite{MR732620}):

\begin{theorem} \emph{\textbf{(B\'ezout)}} \label{bezout} Let $\gamma$ be an irreducible real algebraic curve in $\R^n$, of degree at most $b$. Let $p\in\R[x_1,\ldots,x_n]$ be a non-zero polynomial such that $\gamma$ does not lie in the zero set of $p$ in $\R^n$. Then, $\gamma$ intersects the zero set of $p$ $\lesssim_{n,b} \deg p$ times.

\end{theorem}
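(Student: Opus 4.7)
\textit{\textbf{Proof proposal for Theorem \ref{bezout}.}} The plan is to deduce the result from the classical projective B\'ezout theorem by complexifying and projectivizing, since both operations can only increase the intersection count.

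First I would pass from $\R$ to $\C$. By the discussion preceding Lemma \ref{projection} (and made precise in \cite{Iliopoulou_12}), the irreducible real algebraic curve $\gamma$ sits inside a complex algebraic curve $\gamma_{\C}\subset\C^n$, whose degree equals $\deg\gamma\leq b$ by definition. The polynomial $p\in\R[x_1,\dots,x_n]$ can be viewed as an element of $\C[x_1,\dots,x_n]$, cutting out a hypersurface $V_{\C}(p)\subset\C^n$ of degree $\deg p$, whose intersection with $\R^n$ is the real zero set $Z(p)$. In particular,
\[
\bigl|\gamma\cap Z(p)\bigr|\;\leq\;\bigl|\gamma_{\C}\cap V_{\C}(p)\bigr|,
\]
so it suffices to bound the right-hand side.

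Next I would verify that $\gamma_{\C}$ is not contained in $V_{\C}(p)$. If it were, then restricting to $\R^n$ would give $\gamma\subset Z(p)$, contradicting the hypothesis. Thus $\gamma_{\C}$ and $V_{\C}(p)$ intersect properly in $\C^n$, i.e. their intersection is $0$-dimensional, and standard intersection theory guarantees that the number of intersection points (without multiplicity) is finite.

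Then I would projectivize. Let $\overline{\gamma_{\C}}\subset\mathbb{P}^n(\C)$ be the projective closure of $\gamma_{\C}$, and let $\tilde p$ be the homogenization of $p$, whose zero locus is a projective hypersurface of degree $\deg p$. Since $\overline{\gamma_{\C}}$ is an irreducible projective curve of degree $\deg\gamma_{\C}\leq b$ not contained in $V(\tilde p)$, the generalized (projective) B\'ezout theorem — which is the statement I would cite as the one substantive ingredient — gives
\[
\sum_{x\in\overline{\gamma_{\C}}\cap V(\tilde p)} i(x;\overline{\gamma_{\C}}\cdot V(\tilde p))\;=\;\deg\overline{\gamma_{\C}}\cdot\deg\tilde p\;\leq\;b\cdot\deg p,
\]
where $i(x;\,\cdot\,)$ denotes the local intersection multiplicity. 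Dropping multiplicities and intersections at infinity only decreases the count, so $|\gamma_{\C}\cap V_{\C}(p)|\leq b\cdot\deg p$, and combining with the first displayed inequality yields $|\gamma\cap Z(p)|\lesssim_{n,b}\deg p$.

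The only nontrivial ingredient is the projective B\'ezout theorem in $\mathbb{P}^n$ for the intersection of an irreducible curve with a hypersurface; this is where I would defer to the textbook reference \cite{MR732620}. The potential obstacle is subtle behavior at infinity and the treatment of intersection multiplicities, but since we only need an upper bound on the naive cardinality of the real intersection, both issues can be absorbed by passing to the projective closure where they are harmless — any real affine intersection point is accounted for at least once on the left-hand side of the B\'ezout equality.
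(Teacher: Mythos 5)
Your proof is correct, and it fleshes out exactly the reduction the paper leaves implicit: the paper states Theorem \ref{bezout} without proof as a corollary of B\'ezout's theorem, citing \cite{MR732620}, and your argument (complexify to the Zariski closure $\gamma_{\C}$, check that $\gamma_{\C}\not\subset V_{\C}(p)$, projectivize, apply projective B\'ezout for a curve against a hypersurface, then drop multiplicities and points at infinity) is the standard way to deduce the stated affine, real, multiplicity-free bound. The one fact you use silently — that $\gamma_{\C}$ is irreducible as a complex curve, so that $\gamma_{\C}\cap V_{\C}(p)$ is automatically zero-dimensional — does follow from the irreducibility of $\gamma$ over $\R$ (otherwise the Zariski closure would be contained in a proper component), and your resulting constant $b\cdot\deg p$ is in fact independent of $n$, which is stronger than required.
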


Now, similarly to the situation of joints formed by lines, Lemma \ref{firstpolyd'} implies Lemmas \ref{quilodransimple'} and \ref{quilodranmult'} that follow; Lemma \ref{quilodransimple'} will be used to show our multijoints estimate, while Lemma \ref{quilodranmult'} will imply our generic joints estimate.

\begin{lemma} \label{quilodransimple'} Let $n \geq 2$, $b>0$. Let $\Gamma_1,\ldots,\Gamma_n$ be finite collections of irreducible real algebraic curves in $\R^n$, of degree at most $b$. If $J$ is a subset of $J(\Gamma_1,\ldots,\Gamma_n)$, such that there exists a non-zero polynomial in $\R[x_1,\dots, x_n]$, of degree at most $d$, vanishing on $J$, then $$|J| \lesssim_{n,b} |\Gamma_1\cup\ldots\cup\Gamma_n| \cdot d. $$

\end{lemma}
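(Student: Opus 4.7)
The plan is to run the Quilodrán-style iterative argument of Lemma~\ref{quilodran}/Corollary~\ref{quilodransimple}, but with Lemma~\ref{firstpolyd'} in place of Lemma~\ref{firstpolyd}. The essential observation is that every multijoint formed by $\Gamma_1,\ldots,\Gamma_n$ is, in particular, a joint formed by the union $\Gamma:=\Gamma_1\cup\cdots\cup\Gamma_n$, since the tangent directions at such a point already span $\R^n$ by the very definition of a multijoint. Thus the hypothesis of Lemma~\ref{firstpolyd'} is met for $J\subset J(\Gamma)$ with the same polynomial of degree at most $d$.

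First I would apply Lemma~\ref{firstpolyd'} to obtain some $\gamma_1\in\Gamma$ containing at most $C_{n,b}\,d$ points of $J$, where $C_{n,b}$ denotes the implicit constant in that lemma (coming ultimately from Bézout's theorem \ref{bezout}). Discard $\gamma_1$ from each $\Gamma_i$ that contains it, and discard the points of $J\cap\gamma_1$ from $J$. The remaining set $J_1:=J\setminus\gamma_1$ is still a set of multijoints for the reduced collections $\Gamma_i\setminus\{\gamma_1\}$: indeed, for any $x\in J_1$ the curves from each $\Gamma_i$ witnessing the multijoint property at $x$ lie in the reduced collections, because $x\notin\gamma_1$. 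Consequently $J_1$ is still a set of joints for $\Gamma\setminus\{\gamma_1\}$, and it still lies in the zero set of the original polynomial.

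Now iterate: at step $j$ one applies Lemma~\ref{firstpolyd'} to $J_{j-1}$ and the collection $\Gamma\setminus\{\gamma_1,\ldots,\gamma_{j-1}\}$ to extract a new curve $\gamma_j$ containing at most $C_{n,b}\,d$ points of $J_{j-1}$, and removes both. The process terminates after at most $|\Gamma|=|\Gamma_1\cup\cdots\cup\Gamma_n|$ steps, since each step removes one curve from $\Gamma$ and the procedure stops once no points of $J$ are left. Writing $J$ as the disjoint union of the sets of points removed at each step, each of size at most $C_{n,b}\,d$, we conclude
$$|J|\;\leq\;|\Gamma_1\cup\cdots\cup\Gamma_n|\cdot C_{n,b}\,d,$$
which is the desired estimate.

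The only substantive point in the argument is checking that the multijoint property is inherited at each iterative step, and this is essentially immediate from the local nature of the definition of multijoint (removing a curve not passing through $x$ has no effect on whether $x$ is a multijoint). All the nontrivial algebraic-geometric work — in particular the Bézout-type bound hidden inside the constant $C_{n,b}$ — has already been absorbed into Lemma~\ref{firstpolyd'}, so no further ingredients are needed.
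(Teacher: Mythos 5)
Your proof is correct and follows the exact same Quilodr\'an-style iterative argument that the paper intends (the paper does not write this proof out explicitly, but states that Lemma~\ref{quilodransimple'} follows from Lemma~\ref{firstpolyd'} in the same way that Corollary~\ref{quilodransimple} follows from Lemma~\ref{firstpolyd}). You have correctly identified the key check, namely that after removing a curve and the points of $J$ it contains, the surviving points are still multijoints for the reduced collections (and hence joints for the reduced union), and your observation that this holds because the witnessing curves through a surviving point cannot include the removed curve is precisely what makes the iteration go through.
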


\begin{lemma} \label{quilodranmult'} Let $n \geq 2$, $b>0$. Let $\Gamma$ be a finite collection of irreducible real algebraic curves in $\R^n$, of degree at most $b$. If, for $k \geq n$, $J$ is a subset of $J^k(\Gamma)$, such that there exists a non-zero polynomial in $\R[x_1,\ldots,x_n]$, of degree at most $d$, vanishing on $J$, then $$|J| k \lesssim_{n,b}|\Gamma| d.$$
\end{lemma}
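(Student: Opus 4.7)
The plan is to mirror the proof of Lemma \ref{quilodranmult} line-by-line, substituting Lemma \ref{firstpolyd'} for Lemma \ref{firstpolyd}, and paying extra attention to the fact that a single irreducible curve can be tangent to itself at a point in more than one direction (unlike a line, which contributes exactly one tangent direction).

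First I would fix a constant $c_b$, depending only on $b$, that bounds from above the number of tangent directions contributed at any point by any irreducible real algebraic curve of degree $\leq b$; such a $c_b$ exists because the multiplicity of such a curve at any point is bounded in terms of $b$, and each analytic branch at the point contributes one tangent direction. Then I would run the following iterative procedure. At step $i$, I would have a residual set of curves $\Gamma_{i-1} \subseteq \Gamma$ and a residual set of joints $J_{i-1} \subseteq J$, with the invariant that $J_{i-1} \subseteq J(\Gamma_{i-1})$ (plain joints, not generic), so that the hypothesised degree-$d$ polynomial still vanishes on $J_{i-1}$. Applying Lemma \ref{firstpolyd'} yields a curve $\gamma_i \in \Gamma_{i-1}$ containing $\lesssim_b d$ points of $J_{i-1}$. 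I would then set $\Gamma_i = \Gamma_{i-1}\setminus\{\gamma_i\}$ and define $J_i$ by discarding from $J_{i-1}$ exactly those points $x \in \gamma_i\cap J_{i-1}$ with $|T_x^{\Gamma_i}| < n$. The invariant persists because, by the original genericity of $x \in J \subseteq J^k(\Gamma)$, \emph{any} $n$ elements of $T_x^{\Gamma_i} \subseteq T_x^{\Gamma}$ still span $\R^n$, so as long as $|T_x^{\Gamma_i}| \geq n$ the point $x$ is still a (generic, hence plain) joint of $\Gamma_i$. The process terminates in at most $|\Gamma|$ steps.

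The payoff is a double-counting argument. For each $x \in J$ let $\tau(x)$ be the step at which $x$ is discarded. Since $|T_x^{\Gamma}| \geq k$ while $|T_x^{\Gamma_{\tau(x)}}| < n$, at least $k - n + 1 \geq k/n$ tangent directions at $x$ must have been stripped away over the first $\tau(x)$ steps; since each curve contributes at most $c_b$ tangent directions at $x$, at least $(k-n+1)/c_b \gtrsim_{n,b} k$ of the curves $\gamma_1, \ldots, \gamma_{\tau(x)}$ pass through $x$, and for each such step $i \leq \tau(x)$ we have $x \in J_{i-1}\cap\gamma_i$ by minimality of $\tau(x)$. Summing over $x$, and using $|J_{i-1}\cap\gamma_i| \lesssim_b d$ at every step together with the fact that the process has at most $|\Gamma|$ steps, one obtains
\begin{equation*}
|\Gamma|\cdot C_b d \;\geq\; \sum_i |J_{i-1}\cap \gamma_i| \;=\; \sum_{x\in J}\#\{i : x \in J_{i-1}\cap\gamma_i\} \;\gtrsim_{n,b}\; |J|k,
\end{equation*}
which rearranges to the claimed $|J|k \lesssim_{n,b} |\Gamma|d$.

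I do not anticipate a serious obstacle here: everything is parallel to Lemma \ref{quilodranmult}, and the one new ingredient, the uniform tangent-direction bound $c_b$, is essentially a consequence of B\'ezout (the same tool underlying Lemma \ref{firstpolyd'}). If one wanted to be maximally careful, the mild point to verify is that replacing $k-(n-1)$ by $(k-n+1)/c_b$ in the counting still absorbs into an $\lesssim_{n,b}$ constant, which it does since $k\geq n$ forces $k-n+1 \geq k/n$.
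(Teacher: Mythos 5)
Your proof is correct and matches the paper's intended approach: the paper omits the proof of this lemma, stating only that it follows "similarly to the situation of joints formed by lines" (i.e. by the Quilodr\'an-style removal argument of Lemma \ref{quilodranmult}, with Lemma \ref{firstpolyd'} in place of Lemma \ref{firstpolyd}). You have correctly identified and handled the one genuinely new ingredient: a single irreducible curve of degree $\leq b$ can contribute up to $c_b$ tangent directions at a point rather than exactly one, so the lower bound of $k-(n-1)$ removed lines through $x$ becomes $(k-n+1)/c_b$ removed curves, which is still $\gtrsim_{n,b} k$; combined with the $\lesssim_b d$ bound from Lemma \ref{firstpolyd'} (rather than $\leq d$), this produces the stated $\lesssim_{n,b}$ constant.
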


More precisely, the proof of Theorem \ref{theoremmult3} will be completely analogous to the proof of Theorem \ref{theoremmult2}, which will be adjusted to suit considerations regarding multijoints formed by real algebraic curves instead of lines with the use of  Lemma \ref{projection}, B\'ezout's theorem (Theorem \ref{bezout}), Lemma \ref{quilodransimple'}, as well as the following, which can be found, for example, in \cite{MR2248869}:

\begin{lemma}\label{pathconnected} For every $n \geq 2$ and $b>0$, every irreducible real algebraic curve in $\R^n$, of degree at most $b$, has $\lesssim_{n,b} 1$ path-connected components.
\end{lemma}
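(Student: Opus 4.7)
The plan is to reduce Lemma \ref{pathconnected} to classical bounds on the topology of real algebraic sets, after controlling the degrees of defining polynomials in terms of $b$ and $n$ alone.

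First, I would embed the irreducible real algebraic curve $\gamma\subset\R^n$ of degree at most $b$ into an irreducible complex algebraic curve $\gamma_{\C}\subset\C^n$ of the same degree, as discussed just after the statement of Theorem \ref{genericjoints2}, and then produce a finite system $p_1,\ldots,p_m\in\R[x_1,\ldots,x_n]$ whose common real zero locus equals $\gamma$ and with both $m$ and $\max_i\deg p_i$ bounded in terms of $n$ and $b$. Concretely, the ideal of an irreducible complex curve of degree $b$ in $\C^n$ is generated in degree $\lesssim_{n,b}1$ (for instance via Castelnuovo--Mumford regularity of curves of bounded degree, or more elementarily by slicing $\gamma_{\C}$ with generic hyperplanes, reducing to the plane case and lifting back). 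Taking real and imaginary parts of these generators yields the required real polynomials.

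Second, I would invoke the Oleinik--Petrovsky--Thom--Milnor bound: the sum of Betti numbers, and in particular the number of connected components, of a real algebraic set in $\R^n$ cut out by polynomials of degree at most $d$ is bounded by a function of $n$ and $d$ alone. Applied to the system produced in Step~1, this yields at once that $\gamma$ has $\lesssim_{n,b}1$ connected components. Passing from connected to path-connected components is then automatic, since every real algebraic (indeed every semialgebraic) set is locally path-connected --- it admits a finite triangulation by a theorem of \L{}ojasiewicz/Hironaka --- so its connected components coincide with its path-connected components.

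The main obstacle is the degree bound in Step~1: one needs the defining polynomials of $\gamma$ to have degree controlled solely by $b$ (and $n$), not by some unspecified function of the embedding. This is the only nontrivial ingredient; Steps~2 and~3 are direct citations. An alternative route that avoids the regularity statement altogether is to pass to a resolution of singularities of $\gamma_{\C}$ and apply Harnack's theorem to bound the number of real components of a smooth projective real curve by $g+1$, where the geometric genus $g$ is $\lesssim_{n,b}1$ by the degree bound; singular points of $\gamma_{\C}$ can identify or separate only boundedly many branches, which changes the component count by $\lesssim_{n,b}1$. Either route leads to the desired estimate $\lesssim_{n,b}1$ on the number of path-connected components of $\gamma$.
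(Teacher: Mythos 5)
Your argument is correct, but it takes a genuinely different route from the paper's own sketch. The paper argues by projecting $\gamma$ onto a generic $2$-dimensional linear subspace: by Lemma \ref{projection} the image is contained in a real plane algebraic curve of degree at most $b$, Harnack's theorem bounds the number of connected components of that plane curve by $\lesssim_b 1$, and one then argues (implicitly, in the paper) that a generic projection of a degree-$b$ curve is a homeomorphism away from $\lesssim_b 1$ points, so the component count of $\gamma$ itself is also $\lesssim_b 1$. Your first route replaces all of this by the Oleinik--Petrovsky--Thom--Milnor Betti-number bound, which is arguably the cleaner and more systematic approach (and in fact is the route taken in the reference \cite{MR2248869} that the paper cites for this lemma): it requires no discussion of how generic projection interacts with components, and it handles path- versus ordinary connectedness transparently via semialgebraic triangulation. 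What it buys in cleanliness it pays for up front: the single nontrivial ingredient, which you correctly isolate, is the bound on the degrees of the defining polynomials in terms of $n$ and $b$ alone (Gruson--Lazarsfeld--Peskine-type regularity), which is heavier machinery than the planar Harnack inequality the paper uses. Your second route is closest in spirit to the paper — both rest on Harnack — but you pass through the normalization and the bound $g\leq (b-1)(b-2)/2$ on the geometric genus, whereas the paper stays in the affine picture and projects to the plane; the step you gloss over ("singular points can identify or separate only boundedly many branches") is fine, since the number of singular points and the number of local branches at each are $\lesssim_{n,b}1$ by a degree bound, but it is worth noting that this is exactly the same kind of generic-projection bookkeeping the paper's sketch also leaves implicit. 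Finally, your observation that connected and path-connected components coincide for semialgebraic sets is a necessary and correctly justified point that the paper does not spell out.
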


Lemma \ref{pathconnected} is a combination of Harnack's theorem for planar curves (see \cite{Harnack}) and the fact that, if $\gamma$ is a real algebraic curve in $\R^n$, of degree at most $b$, then its projection on a generic 2-dim linear subspace of $\R^n$ is contained in a real planar algebraic curve, of degree at most $b$.

Let us now see a sketch of the proof of Theorem \ref{theoremmult3}.\\

\textit{Proof of Theorem \ref{theoremmult3}.} For $n \geq 3$, similarly to the case of multijoints formed by lines, we will show that, if Theorem \ref{theoremmult3} holds in $\R^{n-1}$, then there exists a polynomial in $\R[x_1,\ldots,x_n]$, of low degree, vanishing on a large proportion of our set of multijoints. Then, Lemma \ref{quilodransimple'} will imply that Theorem \ref{theoremmult3} holds in $\R^n$. Since Theorem \ref{theoremmult3} obviously holds in $\R^2$, its proof will be complete.

In particular, if Theorem \ref{theoremmult3} holds in $\R^{n-1}$, for $n\geq 3$, then, for any finite collections $\Gamma_1, \ldots, \Gamma_n$ of irreducible real algebraic curves in $\R^n$, of degree at most $b$, such that $|\Gamma_1|\leq \ldots\leq |\Gamma_n|$, there exists a subset $J$ of $J(\Gamma_1,\ldots, \Gamma_n)$, with
$$|J| \gtrsim |J(\Gamma_1,\ldots,\Gamma_n),
$$
and a non-zero polynomial $p \in \R[x_1,\ldots,x_n]$, with
$$\deg p\lesssim_{n,b} \frac{|\Gamma_1|\cdots|\Gamma_{n-1}|}{|J(\Gamma_1,\ldots,\Gamma_n)|^{n-2}},
$$
vanishing on $J$. To prove this, we apply Guth--Katz partitioning (Theorem \ref{2.1.3}) to the finite set $J(\Gamma_1,\ldots,\Gamma_n)$, with a polynomial $p \in \R[x_1,\ldots,x_n]$ whose degree is at most
$$d:= A_{n,b}\frac{|\Gamma_1|\cdots|\Gamma_{n-1}|}{|J(\Gamma_1,\ldots,\Gamma_n)|^{n-2}},
$$
for an appropriately large constant $A_{n,b}$, depending only on $n$ and $b$. Note that $d>1$, due to our assumption that Theorem \ref{theoremmult3} holds in $\R^{n-1}$. More precisely, the projection of our set of multijoints on a generic hyperplane is a subset of the set of multijoints formed by the collections $P(\Gamma_1),\ldots,P(\Gamma_{n-1})$, where, for each $i=1,\ldots,n-1$, $P(\Gamma_i)$ is the collection of the smallest real algebraic curves, on the hyperplane, containing the projections of the curves in $\Gamma_i$ on the hyperlane (these curves exist and each has degree at most $b$, by to Lemma \ref{projection}. Therefore, applying Theorem \ref{theoremmult3} in $\R^{n-1}$ for the set of multijoints formed by $P(\Gamma_1),\ldots,P(\Gamma_{n-1})$, we see that $d>1$ for $A_{n,b}$ sufficiently large.

Now, $\R^n$ is partitioned in $\sim_n d^n$ cells, each containing $\lesssim_n |J(\Gamma_1,\ldots,\Gamma_n)|/d^n$ points of $J(\Gamma_1,\ldots,\Gamma_n)$. We complete the proof of Theorem \ref{theoremmult3} by showing that $\gtrsim |J(\Gamma_1,\ldots,\Gamma_n)|$ points of $J(\Gamma_1,\ldots,\Gamma_n)$ lie on the zero set $Z$ of $p$. 

Indeed, let us assume for contradiction that $\gtrsim |J(\Gamma_1,\ldots,\Gamma_n)|$ points of $J(\Gamma_1,\ldots,\Gamma_n)$ lie in the union of the cells. Then, there exist $\gtrsim_n d^n$ cells, each containing $\gtrsim_n |J(\Gamma_1,\ldots,\Gamma_n)|/d^n$ points of $J(\Gamma_1,\ldots,\Gamma_n)$ (``full" cells). As in the case of multijoints formed by lines, there exists a full cell $C$ that is intersected by $\lesssim_{n,b} |\Gamma_i|/d^{n-1}$ curves in $\Gamma_i$, for all $i=1,\ldots,n$ (we explain this in the next paragraph); let $\Gamma_{i,C}$ be the set of these curves for each $i=1,\ldots,n$. Then, similarly to the case of multijoints formed by lines, we project the set of multijoints contained in $C$ on a generic hyperplane. The projected points are multijoints formed by $P(\Gamma_{1,C}),\ldots,P(\Gamma_{n-1,C})$, where, for each $i=1,\ldots,n-1$, $P(\Gamma_{i,C})$ is the set of smallest real algebraic curves on the hyperplane that contain the projections of the curves of $\Gamma_{i,C}$ on the hyperplane. By Lemma \ref{projection}, for each $i=1,\ldots,n$, $|P(\Gamma_{i,C})|\lesssim_{n,b}|\Gamma_{i,C}|\lesssim_{n,b}\frac{|\Gamma_i|}{d^{n-1}}$ , and each curve in $P(\Gamma_{i,C})$ has degree at most $b$. Thus, applying Theorem \ref{theoremmult3} in $\R^{n-1}$ for the set of multijoints formed by $P(\Gamma_{1,C}),\ldots,P(\Gamma_{n-1,C})$, we get a contradiction for $A_{n,b}$ large (similarly to the case of multijoints formed by lines). This completes the proof of Theorem \ref{theoremmult3}.

We now conclude by explaining why there exists a full cell that is intersected by $\lesssim_{n,b} |\Gamma_i|/d^{n-1}$ curves in $\Gamma_i$, for all $i=1,\ldots,n$. The reason is that, for each $i=1,\ldots,n$, there exist $\gtrsim_n d^n$ full cells, each of which is intersected by $\lesssim_{n,b} |\Gamma_i|/d^{n-1}$ curves in $\Gamma_i$ (making the implicit constants sufficiently large completes the proof). Indeed, fix $i \in \{1,\ldots,n\}$. Suppose for contradiction that there exist $\lesssim_n d^n$ full cells, each intersected by $\lesssim_{n,b}|\Gamma_i|/d^{n-1}$ curves in $\Gamma_i$. Then, there exist $\gtrsim_n d^n$ full cells, each intersected by $\gtrsim_{n,b}|\Gamma_i|/d^{n-1}$ curves in $\Gamma_i$. Therefore, by Lemma \ref{pathconnected}, there exist $\gtrsim_{n,b} |\Gamma_i|d$ incidences between $Z$ and the curves in $\Gamma_i$ that do not lie in $Z$. However, by B\'ezout's theorem (Theorem \ref{bezout}), there exist $\lesssim_{n,b} |\Gamma_i|d$ such incidences. Being careful with the implicit constants, we have a contradiction.

\qed

The proof of Theorem \ref{genericjoints2} will be analogous to the proof of Theorem \ref{genericjoints}. The new ingredients will be Lemma \ref{projection}, B\'ezout's theorem (Theorem \ref{bezout}), Lemmas \ref{quilodranmult'} and \ref{pathconnected}, as well as ideas from the recent paper \cite{WYZ13} by Wang, Yang and Zhang, including the following Szemer\'edi--Trotter type theorem for real planar algebraic curves:

\begin{theorem}\emph{\textbf{(Wang, Yang, Zhang,\cite{WYZ13})}}\label{wangyangzhang} Let $b$ be a positive integer. Let $\Gamma$ be a finite collection of real algebraic curves in $\R^2$, of degree at most $b$, with no repeated components. Let $\mathcal{P}$ be a finite set of points in $\R^2$. Then,
$$I(\mathcal{P},\Gamma)\lesssim_b |\mathcal{P}|^{\frac{A}{2A-1}}|\Gamma|^{\frac{2A-2}{2A-1}}+|\mathcal{P}|+|\Gamma|,
$$
where $A=\binom{b+2}{2}-1$.
\end{theorem}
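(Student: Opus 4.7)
The plan is to prove this Szemerédi--Trotter-type incidence bound by combining Guth--Katz polynomial partitioning (Theorem \ref{2.1.3}) with B\'ezout's theorem and a K\H{o}v\'ari--S\'os--Tur\'an trivial bound that exploits the dimensional parameter $A+1 = \binom{b+2}{2}$ of the space of polynomials of degree at most $b$ in two variables. First I would establish the trivial bound
\begin{equation*}
I(\mathcal{P},\Gamma) \lesssim_b |\mathcal{P}|\,|\Gamma|^{1-1/A} + |\Gamma|, \qquad (\ast)
\end{equation*}
by noting that any $A$ points of $\R^2$ impose $A$ linear conditions on the $(A+1)$-dimensional space of polynomials of degree $\leq b$; together with the ``no repeated components'' hypothesis, this forces any $A$ points to lie on at most $O_b(1)$ curves of $\Gamma$, so the bipartite point--curve incidence graph is $K_{A,\,O_b(1)+1}$-free. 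Summing $\sum_{\gamma\in\Gamma}\binom{|\gamma \cap \mathcal{P}|}{A}$ and applying Jensen's inequality then yields $(\ast)$.

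Next, apply Theorem \ref{2.1.3} to $\mathcal{P}$ with a degree parameter $d \geq 1$ to be optimized, producing a non-zero polynomial $p$ of degree at most $d$ whose zero set $Z$ partitions $\R^2 \setminus Z$ into $\lesssim d^2$ open cells $C_j$, each containing $\lesssim |\mathcal{P}|/d^2$ points of $\mathcal{P}$. Write $I = I_{\mathrm{off}} + I_{\mathrm{on}}$, splitting incidences according to whether the point lies off or on $Z$. For the off-$Z$ part, any $\gamma\in\Gamma$ not lying in $Z$ meets $Z$ in $\lesssim_b d$ points by B\'ezout's theorem (Theorem \ref{bezout}), hence crosses $\lesssim_b d$ cells, so that $\sum_j \ell_j \lesssim_b |\Gamma|d$ for $\ell_j := |\{\gamma\in\Gamma : \gamma\cap C_j\neq\emptyset\}|$. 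Applying $(\ast)$ cell by cell and H\"older's inequality across the $\lesssim d^2$ cells gives
\begin{equation*}
I_{\mathrm{off}} \;\lesssim_b\; \frac{|\mathcal{P}|}{d^2}\,(d^2)^{1/A}(|\Gamma|d)^{1-1/A} + |\Gamma|d \;=\; |\mathcal{P}|\,|\Gamma|^{1-1/A}\,d^{1/A-1} + |\Gamma|d.
\end{equation*}

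For the on-$Z$ part, curves not in $Z$ again contribute $\lesssim_b |\Gamma|d$ incidences by B\'ezout, while the at most $d$ curves contained in $Z$ (each an irreducible component of the degree-$d$ variety $Z$) form a small subfamily to which one applies $(\ast)$ (or, in the critical regime, an inductive application of the theorem being proved). Choosing $d = |\mathcal{P}|^{A/(2A-1)}|\Gamma|^{-1/(2A-1)}$ equalizes the two principal terms $|\mathcal{P}|\,|\Gamma|^{1-1/A}\,d^{1/A-1}$ and $|\Gamma|d$ at the target value $|\mathcal{P}|^{A/(2A-1)}|\Gamma|^{(2A-2)/(2A-1)}$. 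Parameter regimes in which this choice of $d$ falls outside the admissible range $[1,\infty)$ correspond precisely to the degenerate cases absorbed into the additive $|\mathcal{P}| + |\Gamma|$ terms, either directly via $(\ast)$ or by trivial counting.

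The main obstacle is ensuring that the contribution of curves contained in $Z$ does not dominate, since such a curve can a priori be incident to many points of $\mathcal{P}\cap Z$. I would handle this by induction on $|\mathcal{P}|$: applying the inductive form of the theorem to the subfamily $\{\gamma\in\Gamma : \gamma\subset Z\}$ of size $\leq d$ produces a contribution that, after the above balancing, is dominated by the main term throughout the effective range $|\mathcal{P}|\lesssim|\Gamma|^2$, while for $|\mathcal{P}|\gtrsim|\Gamma|^2$ the bound collapses into the additive $|\mathcal{P}|$ term. Alternatively, as pursued in \cite{WYZ13}, a two-level partitioning that further subdivides $\mathcal{P}\cap Z$ along the one-dimensional variety $Z$ achieves the same effect by recursively reducing on-$Z$ incidences to a lower-dimensional incidence problem on $Z$ itself, which is the cleanest route.
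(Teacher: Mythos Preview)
The paper does not prove Theorem~\ref{wangyangzhang}; it is quoted from \cite{WYZ13} and used as a black box (Corollary~\ref{wangyangzhangcor} supplies the base case of Theorem~\ref{genericjoints2}, and the companion Lemma~\ref{dimA} is invoked for the ``trivial bound'' in that same proof). So there is nothing here to compare your argument against; your outline is the standard polynomial-partitioning proof and is essentially the approach of \cite{WYZ13}.

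There is, however, a genuine gap in your justification of $(\ast)$. It is \emph{not} true that every $A$-tuple of points lies on $O_b(1)$ curves of $\Gamma$. For $b\ge 3$ one has $A=\binom{b+2}{2}-1\le b^2$, and a Cayley--Bacharach configuration---for instance the nine base points of a pencil of cubics when $b=3$, $A=9$---lies on an entire one-parameter family of irreducible cubics with pairwise no common component, so the incidence graph need not be $K_{A,t}$-free for any fixed $t$. The correct input is Lemma~\ref{dimA}: among any $b^2+1$ points on a fixed $\gamma\in\Gamma$ there is \emph{some} $A$-subset that determines $\gamma$. From this one deduces (cf.\ the argument around \eqref{eq:iii}--\eqref{eq:v} in this paper) that each curve $\gamma$ with $|N_\gamma|>b^2$ carries $\gtrsim_b|N_\gamma|^A$ determining $A$-tuples; since distinct curves cannot share a determining tuple, $\sum_\gamma|N_\gamma|^A\lesssim_b|\mathcal{P}|^A$, and H\"older then gives $(\ast)$. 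With this fix---and after passing to irreducible components so that ``$\gamma\subset Z$'' versus ``$|\gamma\cap Z|<\infty$'' is a genuine dichotomy in the on-$Z$ analysis---your partitioning argument is sound.
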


\begin{corollary} \label{wangyangzhangcor} Let $b$ be a positive integer, $n \geq 2$. Let $\Gamma$ be a finite collection of real algebraic curves in $\R^2$, of degree at most $b$. Then, for all $k\geq n$,
$$|J^k(\Gamma)|\lesssim_b \frac{|\Gamma|^2}{k^{2+\frac{1}{A-1}}}+\frac{|\Gamma|}{k},
$$
where $A=\binom{b+2}{2}-1$.
\end{corollary}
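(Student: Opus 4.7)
The plan is to apply the Wang--Yang--Zhang incidence bound (Theorem \ref{wangyangzhang}) to the pair $(J^k(\Gamma),\Gamma)$ in the now-standard way, after first passing to irreducible components so that the no-repeated-components hypothesis is satisfied and then bounding the incidence count from below by $\gtrsim_{b} k\,|J^k(\Gamma)|$.

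First, I would replace $\Gamma$ by the collection $\Gamma'$ of its distinct irreducible components. Each curve in $\Gamma$ has at most $b$ irreducible components, each of degree at most $b$, so $|\Gamma'|\leq b\,|\Gamma|$, and $\Gamma'$ has no repeated components. Since the tangent directions at a point $x$ of a reducible curve are exactly the union of the tangent directions at $x$ of its components, we have the set identity $T_x^{\Gamma'}=T_x^{\Gamma}$, and in particular $J^k(\Gamma')=J^k(\Gamma)$ with the same tangent-count stratification.

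Next, I would establish the lower bound on incidences. An irreducible real algebraic curve $\gamma\in\Gamma'$ passing through $x$ has at most $\mathrm{mult}_x(\gamma)\leq\deg\gamma\leq b$ analytic branches at $x$, and hence contributes at most $b$ elements to $T_x^{\Gamma'}$. Therefore, if $n'(x)$ denotes the number of curves of $\Gamma'$ through $x$, then $n'(x)\geq |T_x^{\Gamma'}|/b\geq k/b$. Summing over $x\in J^k(\Gamma)$ gives $I(J^k(\Gamma),\Gamma')\gtrsim_{b} k\,|J^k(\Gamma)|$. Plugging this into Theorem \ref{wangyangzhang} (with $\mathcal{P}=J^k(\Gamma)$ and $\Gamma$ replaced by $\Gamma'$) and using $|\Gamma'|\leq b\,|\Gamma|$, I obtain
$$
k\,|J^k(\Gamma)|\ \lesssim_{b}\ |J^k(\Gamma)|^{\frac{A}{2A-1}}\,|\Gamma|^{\frac{2A-2}{2A-1}}\ +\ |J^k(\Gamma)|\ +\ |\Gamma|.
$$

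The remaining step, and the only place where a little care is needed, is a routine case analysis that I expect to be the main (but mild) obstacle. For $k$ larger than a threshold $k_0=k_0(b)$ chosen so that $k$ dominates the implicit constant in front of the middle term, I can absorb $|J^k(\Gamma)|$ into the left-hand side and conclude that either the first or the third term on the right dominates; balancing against the first and using $(2A-1)/(A-1)=2+1/(A-1)$ and $(2A-2)/(A-1)=2$ rearranges into $|J^k(\Gamma)|\lesssim_b |\Gamma|^2/k^{2+1/(A-1)}$, while the third contributes the $|\Gamma|/k$ term. For $k\leq k_0(b)$, the desired bound is immediate from the trivial Bézout estimate $|J^k(\Gamma)|\lesssim_b |\Gamma'|^2\lesssim_b |\Gamma|^2$, since then $k^{2+1/(A-1)}\lesssim_b 1$. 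Combining the two regimes yields the claimed inequality.
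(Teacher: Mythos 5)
Your proof is correct and follows essentially the same approach as the paper: pass to distinct irreducible components (so the no-repeated-components hypothesis of Theorem \ref{wangyangzhang} is met), bound incidences below by $\gtrsim_b k\,|J^k(\Gamma)|$, apply the Wang--Yang--Zhang theorem, and rearrange with a case split on $k$. The only cosmetic difference is that the paper derives the lower incidence bound through the weighted count $I^*(J^k(\Gamma),\Gamma)$ (counting each curve through $x$ with multiplicity equal to its number of branches there) and then uses $I^*\lesssim_b I+|\Gamma|$, whereas you bound the number of tangent directions contributed by each irreducible curve by $b$ and conclude directly that $\geq k/b$ distinct curves pass through each point of $J^k(\Gamma)$; these two accountings are equivalent.
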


\begin{proof} Any real algebraic curve in $\R^2$, of degree at most $b$, contains $\lesssim_b 1$ irreducible components; we may therefore assume that the curves in $\Gamma$ are all irreducible. 

We denote by $I^*(J^k(\Gamma),\Gamma)$ the set of incidences between $J^k(\Gamma)$ and $\Gamma$ including multiplicities (i.e., counting each $\gamma\in \Gamma$ through a point $x \in J^k(\Gamma)$ as many times as the number of times $\gamma$ crosses itself at $x$). By the definition of $J^k(\Gamma)$,
\begin{equation}\label{eq:lower} I^*(J^k(\Gamma),\Gamma)\geq |J^k(\Gamma)|\;k.
\end{equation}
On the other hand, each $\gamma \in \Gamma$ crosses itself $\lesssim_b1$ times, therefore
$$I^*(J^k(\Gamma),\Gamma)\lesssim_b I(J^k(\Gamma),\Gamma) + |\Gamma|
$$
\begin{equation} \label{eq:upper}\lesssim_b|J^k(\Gamma)|^{\frac{A}{2A-1}}|\Gamma|^{\frac{2A-2}{2A-1}}+|J^k(\Gamma)|+|\Gamma|;
\end{equation}
note that the last inequality is due to Theorem \ref{wangyangzhang}. Now, combining \eqref{eq:lower} and \eqref{eq:upper} completes the proof.

\end{proof}

We will also use Lemma \ref{dimA} that follows, which is the main idea in \cite{WYZ13}:

\begin{lemma}\emph{\textbf{(Wang, Yang, Zhang, \cite{WYZ13})}}\label{dimA} Let $\gamma$ be a real algebraic curve in $\R^2$, of degree $b$. Let $A:=\binom{b+2}{2}-1$. Then, for every set $\mathcal{P}$ of $b^2+1$ points of $\gamma$, there exists a subset $\mathcal{P}'$ of $\mathcal{P}$, consisting of $A$ points, such that, if $\gamma'$ is a real algebraic curve in $\R^2$, of degree at most $b$, passing through each point of $\mathcal{P}'$, then $\gamma'$ and $\gamma$ have a common component.

\end{lemma}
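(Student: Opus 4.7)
The plan is to use a dimension-counting/greedy argument in the space $V := \R[x,y]_{\leq b}$ of polynomials of degree at most $b$, which has dimension $\binom{b+2}{2} = A+1$. Let $f \in V$ be a minimal polynomial (of degree exactly $b$) defining $\gamma$; it vanishes on all of $\gamma$, in particular on $\mathcal{P}$, and any other $g \in V$ vanishing on $\gamma$ is a scalar multiple of $f$. For a finite $S \subset \R^2$, write $V(S) := \{g \in V : g|_S \equiv 0\}$. The strategy is to construct $\mathcal{P}'$ by picking points of $\mathcal{P}$ greedily, always trying to drop $\dim V(\cdot)$ by one, and to analyse the two cases (success, or getting stuck).

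Concretely, I would set $V_0 := V$ and inductively attempt to choose $x_i \in \mathcal{P} \setminus \{x_1,\ldots,x_{i-1}\}$ so that some $q \in V_{i-1}$ has $q(x_i) \neq 0$; then $V_i := V_{i-1} \cap \{q : q(x_i) = 0\}$ has $\dim V_i = \dim V_{i-1} - 1$. Note $f$ lies in every $V_i$ since $f$ vanishes on all of $\mathcal{P}$. In the favourable case I can carry out this procedure for $i = 1, \ldots, A$: then $\dim V_A = (A+1) - A = 1$, and since $f \in V_A$ is non-zero, $V_A = \mathrm{span}(f)$. For $\mathcal{P}' := \{x_1,\ldots,x_A\}$, any curve $\gamma'$ of degree $\leq b$ through $\mathcal{P}'$ has a defining polynomial $g \in V(\mathcal{P}') \subseteq V_A$, hence $g$ is a scalar multiple of $f$, giving $\gamma' = \gamma$ and the conclusion.

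In the unfavourable case the procedure gets stuck at some step $i \leq A$, which by definition means that every $q \in V_{i-1}$ vanishes on all of $\mathcal{P} \setminus \{x_1,\ldots,x_{i-1}\}$, hence on all of $\mathcal{P}$. Each such $q$ therefore has degree at most $b$ and vanishes at $b^2+1$ points of $\gamma$, so by Bézout's theorem (applied between $q$ and $f$, both of degree $\leq b$) $q$ must share an irreducible factor with $f$, i.e., the curve $\{q=0\}$ shares a component with $\gamma$. I then complete $\{x_1,\ldots,x_{i-1}\}$ to a set $\mathcal{P}' \subset \mathcal{P}$ of size $A$ by picking $A - (i-1)$ additional points from $\mathcal{P} \setminus \{x_1,\ldots,x_{i-1}\}$; this is possible because $|\mathcal{P}| - (i-1) = b^2 + 2 - i \geq A + 1 - i$ whenever $A \leq b^2 + 1$, which holds for all $b \geq 1$. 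Any $\gamma'$ of degree $\leq b$ through $\mathcal{P}'$ then has its defining polynomial in $V_{i-1}$, and so shares a component with $\gamma$ by the above.

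The step I expect to be the main obstacle — or at least the one requiring the most care — is the Bézout part in the stuck case: one has to use that $\gamma$, while possibly reducible, has a defining polynomial of degree exactly $b$, so that a degree-$\leq b$ polynomial intersecting it in more than $b^2$ points shares an irreducible component (rather than just coincidentally passing through many points). The other delicate point is the bookkeeping to ensure that even when the greedy procedure aborts early, enough points remain in $\mathcal{P}$ to pad $\mathcal{P}'$ up to exactly $A$ elements; this is precisely where the hypothesis $|\mathcal{P}| = b^2 + 1$ (rather than, say, $A$) is used.
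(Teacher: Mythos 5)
Your proof is correct and follows essentially the same approach as the paper's (one-sentence) sketch: the paper merely notes that $\R_b[x,y]$ has dimension $\binom{b+2}{2}$ and that $\mathcal{P}$, lying on the degree-$b$ curve $\gamma$, imposes at most $\binom{b+2}{2}-1 = A$ linearly independent vanishing conditions on it, which is exactly the content of your greedy procedure. One small remark: in the favourable case, after deducing $g \in \mathrm{span}(f)$, it is cleaner to conclude by the same Bézout step as the stuck case (since $g = cf$ certainly shares a component with $f$) rather than asserting $\gamma' = \gamma$ outright, which requires a little care about what "real algebraic curve of degree $b$" means; but this does not affect the validity of the argument, since a common component is all the lemma asks for.
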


More particularly, it is known by B\'ezout's theorem that, if $\Gamma$ is a set of real planar algebraic curves of degree at most $b$, with no common components, then every set of $b^2+1$ points of a curve in $\Gamma$ fully determines that curve in $\Gamma$. However, it is shown in \cite{WYZ13} that each set of $b^2+1$ points of a curve in $\Gamma$ has some subset of $\binom{b+2}{2}-1$ points (a strict subset for $b \geq 3$) that also fully determines the curve in $\Gamma$. This is proved in \cite{WYZ13} using the fact that the vector space $\R_{b}[x,y]$ of polynomials in $\R[x,y]$ of degree $\leq b$ has dimension $\binom{b+2}{2}$, and thus the number of linearly independent conditions that $\mathcal{P}$ imposes on $\R_b[x,y]$, is $\leq \binom{b+2}{2}-1$. \\

\textit{Proof of Theorem \ref{genericjoints2}.} For $n \geq 3$, similarly to the case of generic joints formed by lines, we will show that, if Theorem \ref{genericjoints2} holds in $\R^{n-1}$, then, under the assumption that our set of generic joints is large, there exists a polynomial in $\R[x_1,\ldots,x_n]$, of low degree, vanishing on a large proportion of our set of joints. Then, Lemma \ref{quilodranmult'} will imply that Theorem \ref{genericjoints2} holds in $\R^n$. Since Theorem \ref{genericjoints2} holds in $\R^2$ (by Corollary \ref{wangyangzhangcor}), its proof will be complete.

In particular, if Theorem \ref{genericjoints2} holds in $\R^{n-1}$, for $n \geq 3$, then, for any $k \geq n$ and any finite collection $\Gamma$ of real algebraic curves in $\R^n$ of degree at most $b$, such that $$|J^k(\Gamma)|\geq C_{n,b} \left(\frac{|\Gamma|^{\frac{n}{n-1}}}{k^{\frac{n}{n-1}+\frac{1}{n-1}\cdot\frac{1}{B-1}}}+\frac{|\Gamma|}{k}\right)$$ for some sufficiently large constant $C_{n,b}$, depending only on $n$ and $b$, there exists a subset $J^k$ of $J^k(\Gamma)$, with 
$$|J^k| \gtrsim |J^k(\Gamma)|,$$
and a non-zero polynomial $p \in \R[x_1,\ldots,x_n]$, with
$$\deg p \leq A_{n,b} \frac{|\Gamma|^{n-1}}{|J^k(\Gamma)|^{n-2}k^{n-1+\frac{1}{B-1}}},$$
vanishing on $J^k$, where $A_{n,b}$ is a constant depending only on $n$ and $b$. We will prove this by applying Guth--Katz partitioning (Theorem \ref{2.1.3}) to the finite set $J^k(\Gamma)$, with a polynomial $p \in \R[x_1,\ldots,x_n]$ whose degree is at most
$$d:= A_{n,b}\frac{|\Gamma|^{n-1}}{|J^k(\Gamma)|^{n-2}k^{n-1+\frac{1}{B-1}}},
$$
for an appropriately large constant $A_{n,b}$, depending only on $n$ and $b$. 

Note that $d>1$ for $A_{n,b}$ sufficiently large, due to our assumption that Theorem \ref{genericjoints2} holds in $\R^{n-1}$. Let us explain this in detail. For each $\gamma \in \Gamma$, we define $P^{n-1}(\gamma)$ to be the smallest real algebraic curve, on a generic hyperplane of $\R^n$, containing the projection of $\gamma$ on the hyperplane; such a curve exists and has degree at most $b$ (by Lemma \ref{projection}). Let $P^{n-1}(\Gamma):=\{P^{n-1}(\gamma):\gamma\in \Gamma\}$, and let $P^{n-1}(J^k(\Gamma))$ be the projection of $J^k(\Gamma)$ on the generic hyperplane. It is clear that $P^{n-1}(J^k(\Gamma))\subset \cup_{\lambda \geq k}\;J^{\lambda}(P^{n-1}(\Gamma))$, while $|P^{n-1}(\Gamma)|\leq|\Gamma|$. Thus, applying Theorem \ref{genericjoints2} in $\R^{n-1}$ for $\cup_{\lambda \geq k}\;J^{\lambda}(P^{n-1}(\Gamma))$ $(= J^{k}(P^{n-1}(\Gamma))\cup J^{2k}(P^{n-1}(\Gamma))\cup J^{2^2k}(P^{n-1}(\Gamma))\cup \ldots)$ we obtain
$$|J^k(\Gamma)|\lesssim_{n,b}\sum _{\mu=0}^{+\infty}|J^{2^\mu k}(P^{n-1}(\Gamma))|$$
$$\lesssim_{n,b}\sum_{\mu=0}^{+\infty}\Bigg(\frac{|P^{n-1}(\Gamma)|^{\frac{n-1}{n-2}}}{(2^{\mu}k)^{\frac{n-1}{n-2}+\frac{1}{n-2}\cdot\frac{1}{B-1}}}+\frac{|P^{n-1}(\Gamma)|}{2^{\mu}k}\Bigg)
$$
$$\lesssim_{n,b}\frac{|\Gamma|^{\frac{n-1}{n-2}}}{k^{\frac{n-1}{n-2}+\frac{1}{n-2}\cdot\frac{1}{B-1}}}+\frac{|\Gamma|}{k}.
$$
Now, given our assumption that
$$|J^k(\Gamma)|\geq C_{n,b} \frac{|\Gamma|}{k},
$$
we can assume that $C_{n,b}$ is large enough to have
$$\frac{|\Gamma|}{k} \lesssim_{n,b} \frac{|\Gamma|^{\frac{n-1}{n-2}}}{k^{\frac{n-1}{n-2}+\frac{1}{n-2}\cdot\frac{1}{B-1}}},
$$
and therefore that
$$|J^k(\Gamma)| \lesssim_{n,b}\frac{|\Gamma|^{\frac{n-1}{n-2}}}{k^{\frac{n-1}{n-2}+\frac{1}{n-2}\cdot\frac{1}{B-1}}},
$$
which indeed means that $d>1$ for $A_{n,b}$ sufficiently large.

In this way, $\R^n$ is partitioned in $\sim_n d^n$ cells, each containing $\lesssim_n |J^k(\Gamma)|/d^n$ points of $J^k(\Gamma)$. We complete the proof of Theorem \ref{genericjoints2} by showing that $\gtrsim |J^k(\Gamma)|$ points of $|J^k(\Gamma)|$ lie on the zero set $Z$ of $p$. 

Indeed, let us assume that $\gtrsim |J^k(\Gamma)|$ points of $J^k(\Gamma)$ lie in the union of the cells. Then, there exist $\gtrsim_n d^n$ cells, each containing $\gtrsim_n |J^k(\Gamma)|/d^n$ points of $J^k(\Gamma)$ (``full" cells). It follows (similarly to the case of multijoints formed by real algebraic curves) that there exists a full cell $C$ that is intersected by $\lesssim_{n,b} |\Gamma|/d^{n-1}$ elements of $\Gamma$ (note that here the curves in $\Gamma$ are not necessarily real algebraic curves, but they are contained in real algebraic curves). 

In analogy to the situation of generic joints formed by lines, we split in two cases: the case where $C$ contains $\lesssim_{n,b} k^{\frac{1}{B-1}}$ points of $J^k(\Gamma)$, and the case where it contains $\gtrsim_{n,b} k^{\frac{1}{B-1}}$ points of $J^k(\Gamma)$.

If $C$ contains $\leq \lambda_{n,b}\; k^{\frac{1}{B-1}}$ points of $J^k(\Gamma)$, for some constant $\lambda_{n,b}$ which will be specified later and depends only on $n$ and $b$ , then
$$\frac{|J^k(\Gamma)|}{d^n} \lesssim_{n,b} k^{\frac{1}{B-1}},
$$
and thus
$$|J^k(\Gamma)| \leq C'_{n,b}\frac{|\Gamma|^{\frac{n}{n-1}}}{k^{\frac{n}{n-1}+\frac{1}{n-1}\cdot\frac{1}{B-1}}},
$$
for some constant $C'_{n,b}$ independent of $C_{n,b}$; by fixing $C_{n,b}\gneq C'_{n,b}$, we have a contradiction. 

It therefore follows that $C$ contains $\geq \lambda_{n,b}\;k^{\frac{1}{B-1}}$ points of $J^k(\Gamma)$. Let $\Gamma_{C}$ be the subcollection of $\Gamma$ that consists of the curves in $\Gamma$ that intersect $C$. Similarly to the case of generic joints formed by lines, we project $J^k(\Gamma)\cap C$ on a generic hyperplane; let $P^{n-1}(J^k(\Gamma)\cap C)$ be the set of these projected points. Moreover, let $P^{n-1}(\Gamma_{C}):=\{P^{n-1}(\gamma):\gamma\in\Gamma_C\}$. By Lemma \ref{projection}, each curve in $P^{n-1}(\Gamma_C)$ has degree at most $b$. It is clear that $P^{n-1}(J^k(\Gamma)\cap C)\subset \cup_{\lambda \geq k}\;J^{\lambda}(P^{n-1}(\Gamma_C))$ and $|P^{n-1}(\Gamma_C)|\leq |\Gamma_C|$. Thus, applying Theorem \ref{genericjoints2} in $\R^{n-1}$ for $\cup_{\lambda \geq k}\;J^{\lambda}(P^{n-1}(\Gamma_C))$ $(=J^{k}(P^{n-1}(\Gamma_C))\cup J^{2k}(P^{n-1}(\Gamma_C))\cup J^{2^2k}(P^{n-1}(\Gamma_C))\cup \ldots)$ we obtain
$$|J^k(\Gamma)\cap C| \leq \sum_{\mu=0}^{+\infty}|J^{2^{\mu}k}(P^{n-1}(\Gamma_C))| $$
$$\lesssim_{n,b} \sum_{\mu=0}^{+\infty}\Bigg(\frac{|P^{n-1}(\Gamma_{C})|^{\frac{n-1}{n-2}}}{(2^{\mu}k)^{\frac{n-1}{n-2}+\frac{1}{n-2}\cdot\frac{1}{B-1}}}+\frac{|P^{n-1}(\Gamma_{C})|}{2^{\mu}k}\Bigg)
$$
\begin{equation}\label{eq:conclude}\lesssim_{n,b}\frac{|\Gamma_{C}|^{\frac{n-1}{n-2}}}{k^{\frac{n-1}{n-2}+\frac{1}{n-2}\cdot\frac{1}{B-1}}}+\frac{|\Gamma_{C}|}{k}.
\end{equation}
Note that $|J^k(\Gamma)\cap C|\gtrsim_{n,b} \frac{|J^k(\Gamma)|}{d^n}$ and $|\Gamma_{C}|\lesssim_{n,b}\frac{|\Gamma|}{d^{n-1}}$; therefore, if we show that 
\begin{equation} \label{eq:end}\frac{|\Gamma_{C}|}{k}\lesssim_{n,b}\frac{|\Gamma_{C}|^{\frac{n-1}{n-2}}}{k^{\frac{n-1}{n-2}+\frac{1}{n-2}\cdot\frac{1}{B-1}}},
\end{equation}
then \eqref{eq:conclude} will imply that $d \lesssim_{n,b}\frac{|\Gamma|^{n-1}}{|J^k(\Gamma)|^{n-2}k^{n-1+\frac{1}{B-1}}}$. By fixing $A_{n,b}$ to be sufficiently large we will have a contradiction, and the proof of Theorem \ref{genericjoints2} will be complete.

We conclude by showing \eqref{eq:end}, in other words that $|\Gamma_{C}| \gtrsim_{n,b} k^{\frac{B}{B-1}}$. Note that here we will use an approach different to the one applied in the case of generic joints formed by lines; this approach is essentially the proof of the `trivial bound' in \cite{WYZ13}. Let $J_{C}$ be a subset of $J^k(\Gamma)\cap C$, such that
\begin{equation}\label{eq:i}|J_{C}|=\lambda_{n,b}\; k^{\frac{1}{B-1}}.
\end{equation}
For each $\gamma\in \Gamma_{C}$, we define $P^2(\gamma)$ to be the smallest real planar algebraic curve containing the projection of $\gamma$ on a generic 2-dim linear subspace of $\R^n$; moreover, for each $x \in J_C$, let $P^2(x)$ be the projection of $x$ on the subspace. Let $P^2(\Gamma_{C}):=\{$irreducible components of $P^2(\gamma): \gamma\in \Gamma_C\}$, and $P^2(J_C):=\{P^2(x): x \in J_C\}$. Since each point $x \in J_{C}$ belongs to $J^k(\Gamma_{C})$, there should exist at least $k$ curves in $P^2(\Gamma_C)$ through $P^2(x)$ counted with multiplicity (i.e., counting each curve as many times as the number of times it crosses itself at $P^2(x)$). Therefore, since any curve in $P^2(\Gamma_C)$ has degree at most $b$ (by Lemma \ref{projection}) and thus crosses itself $\lesssim_{b,n}1$ times, we have that
$$|J_C|\cdot k \lesssim_{n,b} I(P^2(J_C),P^2(\Gamma_C))+|P^2(\Gamma_C)|
$$ 
\begin{equation}\label{eq:iii}\lesssim_{n,b} I(P^2(J_C),P^2(\Gamma_C))+|\Gamma_C|;
\end{equation}
note that the last inequality is due to the fact that $|P^2(\Gamma_C)|\lesssim_{n,b} |\Gamma_C|$.
Now, for each $\gamma \in P^2(\Gamma_C)$, we denote by $N_{\gamma}$ the set of points of $P^2(J_C)$ on $\gamma$. Since the curves in $P^2(\Gamma_{C})$ containing $\leq b^2$ points of $P^2(J_{C})$ contribute at most $b^2\cdot |P^2(\Gamma_{C})|$ incidences with $P^2(J_{C})$, it holds that 
$$ I(P^2(J_C),P^2(\Gamma_C)) \leq b^2\cdot |P^2(\Gamma_{C})|+ \sum_{\left\{\gamma \in P^2(\Gamma_{C}):|N_{\gamma}|\geq b^2+1\right\}}|N_{\gamma}|
$$
\begin{equation}\label{eq:iv}\lesssim_{n,b}|\Gamma_{C}|+ \sum_{\left\{\gamma \in P^2(\Gamma_{C}):|N_{\gamma}|\geq b^2+1\right\}}|N_{\gamma}|.
\end{equation}
Let $\gamma \in P^2(\Gamma_C)$ with $|N_{\gamma}|\geq b^2+1$. Each element of $P^2(\Gamma_{C})$ is an irreducible real planar algebraic curve of degree at most $b$, thus, by Lemma \ref{dimA}, each set of $b^2+1$ points in $N_{\gamma}$ has a subset of cardinality $B$ that fully determines $\gamma$ in $P^2(\Gamma)$. Therefore, since each such $B$-tuple that fully determines $\gamma$ can be the same for at most $\binom{|N_{\gamma}|-B}{b^2+1-B}$ $(b^2+1)$-tuples of points in $N_{\gamma}$, there exist at least $\binom{|N_{\gamma}|}{b^2+1}/\binom{|N_{\gamma}|-B}{b^2+1-B}\gtrsim_{n,b}|N_{\gamma}|^{B}\gtrsim_{n,b}|N_{\gamma}|$ $B$-tuples of points in $N_{\gamma}$ that fully determine $\gamma$. On the other hand, all the $B$-tuples of points in $P^2(J_C)$ are $\lesssim_{n,b}|P^2(J_C)|^{B}\sim_{n,b} |J_C|^{B}$ in total, so
\begin{equation}\label{eq:v}\sum_{\left\{\gamma \in P^2(\Gamma_{C}):|N_{\gamma}|\geq b^2+1\right\}}|N_{\gamma}|\lesssim_{n,b}|J_C|^{B}.
\end{equation}
It follows by \eqref{eq:iii}, \eqref{eq:iv} and \eqref{eq:v} that
$$|J_C|\cdot k \leq c'_{n,b} \; (|\Gamma_{C}|+|J_C|^{B}),
$$
for some constant $c'_{n,b}$, depending only on $n$ and $b$. Combining this with \eqref{eq:i} we obtain
$$\lambda_{n,b}\;k^{\frac{B}{B-1}}\leq c'_{n,b} \;|\Gamma_C|+c'_{n,b}\;\lambda_{n,b}^{B}\;k^{\frac{B}{B-1}}.
$$
Thus, by fixing $\lambda_{n,b}$ to be appropriately small, we get
$$|\Gamma_{C}| \gtrsim_{n,b} k^{\frac{B}{B-1}},$$
which completes the proof.

\qed

% BibTeX users please use one of
%\bibliographystyle{spbasic}      % basic style, author-year citations
%\bibliographystyle{spmpsci}      % mathematics and physical sciences
%\bibliographystyle{spphys}       % APS-like style for physics
%\bibliography{}   % name your BibTeX data base

% Non-BibTeX users please use

\end{document}